\documentclass[12pt,reqno]{amsart}
\pdfoutput=1
\usepackage[paperheight=11in,paperwidth=8.5in,left=1in,top=1.25in,right=1in,bottom=1.25in,]{geometry}

\usepackage{hyperref}
\hypersetup{
	colorlinks,
	citecolor=black,
	filecolor=black,
	linkcolor=black,
	urlcolor=black
}
\usepackage[capitalize, nameinlink, noabbrev, nosort]{cleveref}
\usepackage{float}

\usepackage[maxbibnames=99,sorting=nyt,giveninits,maxnames=10,backend=bibtex,style=alphabetic,block=space,url=false]{biblatex}
\addbibresource{main.bib}

\usepackage{graphicx} 
\usepackage{enumerate}
\usepackage{tikz}
\usetikzlibrary{shapes.geometric}
\usetikzlibrary{math}
\usetikzlibrary{calc}
\usetikzlibrary{arrows}
\usetikzlibrary{decorations.markings}
\usepackage{tikz-3dplot}
\usepackage{caption}
\tikzset{->-/.style={decoration={
  markings,
  mark=at position .5 with {\arrow{>}}},postaction={decorate}}}

\usepackage{mathrsfs,amssymb,xcolor,mathtools}

\newtheorem{thm}{Theorem}[section]
\newtheorem{lemm}[thm]{Lemma}

\newtheorem{prop}[thm]{Proposition}
\newtheorem{coro}[thm]{Corollary}

\theoremstyle{definition}
\newtheorem{defi}[thm]{Definition}
\newtheorem{exem}[thm]{Example}
\newtheorem{rema}[thm]{Remark}

\newcommand{\R}{\mathbb R}

\newcommand{\Z}{\mathbb Z}
\newcommand{\N}{\mathcal{N}}
\renewcommand{\P}{\mathbf P}
\newcommand{\J}{\mathcal J}
\newcommand{\B}{\mathcal B}
\newcommand{\y}{\mathbf y}
\newcommand{\T}{\mathcal T}
\newcommand{\Grk}{\mathrm{Gr}_{k,n}^{\geq0}}
\newcommand{\precdot}{\prec\hspace{-5pt}\cdot ~}
\newcommand{\Le}{\reflectbox{L}}

\def\des{\ensuremath{\mathrm{des}}}
\def\cdes{\ensuremath{\mathrm{cdes}}}
\def\cDes{\ensuremath{\mathrm{cDes}}}

\def\area{\ensuremath{\mathrm{area}}}
\def\cover{\ensuremath{\mathrm{cover}}}

\DeclareMathOperator{\Ext}{Ext}

\DeclareMathOperator{\conv}{convex}

\DeclareMathOperator{\col}{col}

\DeclarePairedDelimiter{\floor}{\lfloor}{\rfloor}

\newcommand \acknowledgements{\paragraph{\textbf{Acknowledgements}}}

\newcommand{\pyramid}{
    \tikz[scale=0.3]{
    \coordinate (A) at (0,1);
    \coordinate (B) at (2.5,0);
    \coordinate (C) at (4,1);
    \coordinate (D) at (1.5,2);

    \coordinate (Apex) at (2,4);

    \draw[thick] (A) -- (B) -- (C);
    \draw[thick,dotted] (C) -- (D) --(A);

    \draw[thick] (Apex) -- (A);
    \draw[thick] (Apex) -- (B);
    \draw[thick] (Apex) -- (C);
    \draw[thick,dotted] (Apex) -- (D);
    }
}

\newcommand{\prism}{
\tikz[baseline=(base),scale=1.5]{
        \coordinate (A1) at (0,0);        
        \coordinate (B1) at (0.5,0);      
        \coordinate (C1) at (0.25,0.4);   
        
        \coordinate (A2) at (0.2,0.2);    
        \coordinate (B2) at (0.7,0.2);    
        \coordinate (C2) at (0.45,0.6);   

        \draw[thick] (A1) -- (B1) -- (C1) -- cycle;

        \draw[thick,dotted] (A1) -- (A2);
        \draw[thick,dotted] (A2) -- (B2);
        \draw[thick,dotted] (A2) -- (C2);
        \draw[thick] (B2) -- (C2);

        \draw[thick] (B1) -- (B2);
        \draw[thick] (C1) -- (C2);

        \path (A1) -- (B1) coordinate [pos=0.5] (base);
    }
}

\title{The Ehrhart $h^*$-polynomials of positroid polytopes}
\author{Yuhan Jiang}
\email{yjiang@math.harvard.edu}

\begin{document}

\begin{abstract}
A positroid is a matroid realized by a matrix such that all maximal minors are non-negative.
Positroid polytopes are matroid polytopes of positroids.
In particular, they are lattice polytopes.
The Ehrhart polynomial of a lattice polytope counts the number of integer points in the dilation of that polytope.
The Ehrhart series is the generating function of the Ehrhart polynomial, which is a rational function with the numerator called the $h^*$-polynomial.
We compute the $h^*$-polynomials of an arbitrary positroid polytope by a family of shelling orders of it.
We also compute the $h^*$-polynomial of any positroid polytope with some facets removed and we relate it to the descents of permutations.
Our result generalizes that of Early, Kim, and Li for hypersimplices.
\end{abstract}

\maketitle
\setcounter{tocdepth}{1}
\tableofcontents

\section{Introduction}\label{intro}

A positroid is a matroid on an ordered set realized by a matrix such that all of its maximal minors are non-negative. 
Postnikov \cite{postnikov2006} showed that positroids are in bijection with several interesting classes of combinatorial objects, including Grassmann necklaces, decorated permutations, $\Le$-diagrams, and equivalence classes of plabic graphs.

If $P \subseteq \Z^n$ is a $d$-dimensional lattice polytope, its \emph{Ehrhart function/polynomial} is defined for every integer $t \ge 0$ by
\[
E(P,t) := \#(t \cdot P) \cap \Z^n
\]
where $t \cdot P$ is the dilation of $P$ by a factor $t$, i.e., $t \cdot P = \{t \cdot v \mid v \in P\}$.
It is well known from Ehrhart \cite{ehrhart} that $E(P,t)$ is a polynomial function in $t$.
The corresponding \emph{Ehrhart series} is defined as
$\sum_{t=0}^\infty E(P,t) z^t = \frac{h^*(P,z)}{(1-z)^{d+1}}$
where $h^*(P,z) = h_0+h_1 z +\cdots + h_d z^d$ is a polynomial of degree at most $d$ with non-negative coefficients \cite{stanley80}, called the \emph{Ehrhart $h^*$-polynomial} of $P$.
Ehrhart theory naturally extends to half-open polytopes, which are polytopes with some facets removed. The Ehrhart $h^*$-polynomial of the whole polytope can then be obtained by inclusion-exclusion on the faces; see \cref{prop:from_half-open}.

The hypersimplex $\Delta_{k,n}$ is the matroid polytope of the uniform matroid $U_{k,n}$, which is also a positroid.
The $h^*$-polynomial of hypersimplices was first computed by Katzman in \cite{Katzman}. 
Early conjectured that the $h^*$-vector is given by winding numbers of \emph{hypersimplicial decorated ordered set partitions} in \cite{early2017conjectures} and Kim proved it in \cite{Kimh*}. 
Li computed the $h^*$-polynomial of half open hypersimplices in terms of cover relations in \cite{NanLi}. 

In this paper, we generalize their results and give the $h^*$-polynomials of positroid polytopes and half-open positroid polytopes.
Apart from the special case of the uniform matroid, before this paper, no combinatorial formula for the $h^*$-polynomial of an arbitrary positroid polytope was known.

\begin{thm}\label{thm:main}
Let $P_\J$ be any connected positroid polytope (see \cref{def:connected}), where $\J$ is the associated Grassmann necklace (see \cref{def:Grassneck}). 
Let $D_\J \subset S_n$ be the subset of permutations that label the circuit triangulation of $P_\J$ (see \cref{thm:trian}).
For any $w_0 \in D_\J$, let $(\mathcal{P}_{w_0,\J}, \prec)$ be the corresponding poset on $D_\J$ (see \cref{def:digraph}).
The cover statistic of $\mathcal{P}_{w_0,\J}$ gives the $h^*$-polynomial of $P_\J$, i.e.,
$$h^*(P_\J,z) = \sum_{w \in D_\J} z^{\cover(w)}$$
where
$\cover(w)$ is the number of elements $w$ covers in the poset $\mathcal{P}_{w_0,\J}$.
\end{thm}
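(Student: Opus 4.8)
The plan is to produce, for each choice of base point $w_0\in D_\J$, an explicit shelling of the circuit triangulation $\T$ of $P_\J$ whose restriction faces are governed by the poset $\mathcal{P}_{w_0,\J}$, and then to read $h^*(P_\J,z)$ off the half-open decomposition of $P_\J$ that this shelling induces.

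First I would isolate the general mechanism. By \cref{thm:trian}, $\T$ is a \emph{unimodular} triangulation of $P_\J$; write $d=\dim P_\J$ and index its maximal simplices by $D_\J$, so that each $\sigma_w$ is a unimodular $d$-simplex. If $w_1,w_2,\dots$ is a shelling order of $\T$, then $\sigma_{w_j}\cap\bigcup_{i<j}\sigma_{w_i}$ is a union of some facets of $\sigma_{w_j}$, and $P_\J$ is the disjoint union of the half-open simplices obtained from each $\sigma_{w_j}$ by deleting exactly those facets; moreover the number of deleted facets of $\sigma_{w_j}$ equals the size $|R(\sigma_{w_j})|$ of its restriction face, i.e.\ the minimal face of $\sigma_{w_j}$ not contained in an earlier simplex. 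A unimodular $d$-simplex with $r$ of its facets removed has Ehrhart series $z^{r}/(1-z)^{d+1}$, so summing over the pieces yields
\[
h^*(P_\J,z)=\sum_{w\in D_\J} z^{|R(\sigma_w)|}.
\]
Thus it suffices to prove \textbf{(i)} every linear extension of $(\mathcal{P}_{w_0,\J},\prec)$ is a shelling order of $\T$, and \textbf{(ii)} for any such shelling, $|R(\sigma_w)|=\cover(w)$.

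Claim (ii) should follow from the digraph underlying $\mathcal{P}_{w_0,\J}$. Each of the $d+1$ facets of $\sigma_w$ either lies on $\partial P_\J$ — and then lies in no other maximal simplex, so is never deleted — or is interior, and then is shared with exactly one other $\sigma_{w'}$. Hence $|R(\sigma_w)|$ is the number of interior facets of $\sigma_w$ shared with a simplex occurring earlier in the shelling. I would check that the digraph of \cref{def:digraph} is acyclic with unique source $w_0$, that its arcs record the interior facet-adjacencies of $\T$ each exactly once, and that these arcs are precisely the covering relations of $\prec$. Granting this, an arc $w'\to w$ forces $w'\prec w$, hence places $w'$ before $w$ in every linear extension, so the facets of $\sigma_w$ shared with earlier simplices are exactly those corresponding to arcs into $w$; their number is the in-degree of $w$, namely $\cover(w)$, and in particular it is independent of the linear extension chosen.

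The crux is Claim (i): that the order dictated by $\mathcal{P}_{w_0,\J}$ is an honest shelling of $\T$. This is where the combinatorial description of the circuit triangulation of a positroid polytope — its simplices, their facets, and the local moves realizing facet-adjacency, all expressed through the Grassmann-necklace/decorated-permutation data — has to be exploited, generalizing the hypersimplex analysis of Early, Kim, and Li. One natural route is to realize a linear extension of the $\mathcal{P}_{w_0,\J}$-order as compatible with a line (sweep) shelling: choose a linear functional, or a viewpoint, determined by $w_0$ so that sweeping it across $P_\J$ meets the simplices in an order refining $\prec$, using that line shellings of a triangulated polytope are automatically shellings. Absent such a geometric model, the shelling condition must be verified directly: for $\sigma_w$ and a fixed linear extension, one shows that the union of the facets of $\sigma_w$ shared with earlier simplices is the initial segment of some shelling of the simplicial sphere $\partial\sigma_w$, which reduces to a purely local statement about how the circuits indexing $\sigma_w$ interact with those indexing its neighbors across each facet. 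Once (i) is established, (ii) and the general mechanism above complete the proof; as a consistency check, the polynomial $\sum_{w\in D_\J}z^{\cover(w)}$ is then forced to be independent of the base point $w_0$.
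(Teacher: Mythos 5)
Your overall skeleton matches the paper's: combine Stanley's shelling formula (\cref{lem:stanley}) with the claim that any linear extension of $(\mathcal{P}_{w_0,\J},\prec)$ shells the circuit triangulation, and then identify the restriction number of $\Delta_{(w)}$ with $\cover(w)$. Your Claim (ii) is essentially correct, but even it rests on a fact you only promise to ``check'': that every interior facet-adjacency of the triangulation (every edge of $\Gamma_\J$) is a covering relation of $\prec$, so that no two adjacent simplices are incomparable and no edge joins vertices at equal distance from $w_0$. This is not automatic for an arbitrary graph metric; the paper obtains it from \cref{lem:relabel}, which embeds $D_\J$ into the affine symmetric group $\tilde{S_n}$ so that graph distance from $w_0$ equals Coxeter length, whence adjacent vertices differ in length by exactly one and the edges of $\Gamma_\J$ are exactly the cover relations, directed by increasing length.

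The genuine gap is your Claim (i), which you correctly identify as the crux and then leave unproved: you offer a line-sweep heuristic and a ``direct local verification'' without carrying out either. The sweep idea is not straightforward here, because the order on $D_\J$ is by graph distance from a fixed alcove (the number of separating affine hyperplanes), which is not induced by a single linear functional. The paper's actual argument is \cref{prop:shelling}: after the relabeling of \cref{lem:relabel}, it computes the face $\Delta_{(w)}\cap\Delta_{(u)}$ in terms of the support of a reduced word for $L_u(w)$ (\cref{lem:facets}), partitions the simple reflections at $w_i$ into sets $S_1,S_2,S_3$ according to whether they decrease length along an edge of $\Gamma_\J$, decrease length without giving an edge, or increase length, and then uses the parabolic factorization of reduced words in $\tilde{S_n}$ \cite[Prop. 2.4.4]{bjornerbrenti} together with the convexity of $P_\J$ (via the second statement of \cref{lem:relabel}) to show that every earlier simplex meets $\Delta_{(w_i)}$ inside the facets indexed by $S_1$. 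None of this Coxeter-theoretic machinery appears in your proposal, so the central assertion of the theorem remains unestablished.
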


\begin{figure}[H]
    \centering
    \begin{tikzpicture}[thick,scale=1, every node/.style={scale=.8}]
    \draw (0,0) node (2413) [draw,circle] {24135};
    \draw (-3,2) node (2143) [draw,circle] {21435};
    \draw (-1,2) node (3241) [draw,circle] {32415};
    \draw (1,2) node (4213) [draw,circle] {42135};
    \draw (3,2) node (2431) [draw,circle] {24315};
    \draw (2413) -- (2143);
    \draw (2413) -- (3241);
    \draw (2413) -- (4213);
    \draw (2413) -- (2431);
    \draw (-2,4) node (3214) [draw,circle] {32145};
    \draw (0,4) node (3421) [draw,circle] {34215};
    \draw (2,4) node (4231) [draw,circle] {42315};
    \draw (2143) -- (3214) -- (3241);
    \draw (3241) -- (3421) -- (4213);
    \draw (4213) -- (4231) -- (2431);
    \end{tikzpicture}
    \caption{We show the graph of the circuit triangulation of the positroid polytope $P_\J$ associated to the positroid with Grassmann necklace $\J = (123,235,345,145,125)$, which coincides with the Hasse diagram of the poset $\mathcal{P}_{24135,\J}$. The $h^*$-polynomial of $P_\J$ is $1+4z+3z^2$.}
    \label{fig:positroid34521}
\end{figure}

\subsection{Organization}
In \cref{sec:posi}, we introduce positroids and related combinatorial objects. We reduce the problem to \emph{connected positroids}; see \cref{def:connected}.
In \cref{sec:trian}, we analyze the \emph{circuit triangulation} of connected positroid polytopes.
In \cref{sec:shelling}, we give a family of shellings of connected positroid polytopes, which give formulas for the $h^*$-polynomial of all connected positroid polytopes, proving \cref{thm:main}.
In \cref{sec:half-open}, we give the $h^*$-polynomial of all half-open connected positroid polytopes in terms of permutation descents; see \cref{thm:half-open}.
In \cref{sec:tree}, we define a \emph{tree positroid} to be a positroid whose plabic graph is a tree (acyclic), and prove a corollary of \cref{thm:half-open} in the case of tree positroids; see \cref{coro:main}.
\vspace{.5cm}
\acknowledgements{We thank Lauren Williams for suggesting the problem and helpful conversations and comments on the manuscript. We thank Alex Postnikov, Alejandro Morales, Arvind Ayyer, Elisabeth Bullock, Colin Defant, Theo Douvropoulos, Nick Early, and Johnny Gao for helpful discussions.}

\section{Positroids}\label{sec:posi}
A positroid is a matroid that can be represented by a matrix with nonnegative maximal minors.
In this section, we will start by defining \emph{matroids} and \emph{positroids}.
We will define Postnikov's notion of
\emph{Grassmann necklace}, and explain how each one naturally labels a positroid.
Then we introduce positroid polytopes and the notion of \emph{connected} positroids.

\begin{defi}
    A \emph{matroid} is a pair $M = (E,\B)$ consisting of a finite set $E$ and a nonempty collection of subsets $\B = \B(M)$ of $E$, called the \emph{bases} of $M$,
    which satisfy the \emph{basis exchange axiom}:
\begin{center}
    For any $I, J \in \B$ and $i \in I$ there exists $j \in J$ such that $(I \setminus \{i\}) \cup \{j\} \in M$.
\end{center}
All the bases $B \in \B$ have the same size, which is called the \emph{rank} of $M$.
\end{defi}

For a $k \times n$-matrix $A$ of rank $k$ and a $k$-element subset $I \subset [n]$, let $A_I$ denote the $k\times k$-submatrix of $A$ in the column set $I$, and let $\Delta_I(A) := \det(A_I)$ denote the corresponding \emph{maximal minor} of $A$.
The set of $k$-subsets $I \subset [n]$ such that $\Delta_I(A) \neq 0$ form the bases of a rank $k$ matroid $M(A)$.

\begin{defi}
Suppose $A$ is a $k \times n$ matrix of rank $k$ with real entries such that all its maximal minors are nonnegative.
Then the matroid $M(A)$ associated to $A$ is called a \emph{positroid}.
\end{defi}

\subsection{Grassmann necklaces}

\begin{defi}\label{def:Grassneck}
Let $k \leq n$ be a positive integer. A \emph{Grassmann necklace} of type $(k,n)$ is a sequence $(J_1, J_2, \dots, J_n)$ of $k$-subsets $J_i \in \binom{[n]}{k}$ such that for any $i \in [n]$
\begin{itemize}
    \item if $i \in J_i$ then $J_{i+1} = J_i - \{i\} \cup \{j\}$ for some $j \in [n]$,
    \item if $i \notin J_i$ then $J_{i+1} = J_i$,
\end{itemize}
where the indices $i$ are taken modulo $n$.
\end{defi}

The \emph{i-order} $<_i$ on the set $[n]$ is the total order
\[
i <_i i+1 <_i \cdots <_i n <_i 1 <_i \cdots <_i i-2 <_i i-1.
\]

Let $i \in [n]$. The \emph{Gale order} on $\binom{[n]}{d}$ (with respect to $<_i$) is the partial order $\le_i$ defined as follows: for any two $d$-subsets $S = \{s_1 <_i \cdots <_i s_d \} \subseteq [n]$ and $T = \{t_1 <_i \cdots <_i t_d \} \subseteq [n]$, we have $S \le_i T$ if and only if $s_j \le_i t_j$ for all $j \in [d]$.

For a matroid $M \subseteq \binom{[n]}{k}$ of rank $k$ on the set $[n]$, let $\J_M = (J_1, \dots, J_n)$ be the sequence of subsets in $[n]$ such that, for $i \in [n]$, $J_i$ is the lexicographically minimal basis of $M$ with respect to the Gale order with respect to $<_i$ on $[n]$.

\begin{lemm}[\cite{postnikov2006}, Lem 16.3]
For any matroid of rank $k$, the sequence $\J(M)$ is a Grassmann necklace of type $(k,n)$.
\end{lemm}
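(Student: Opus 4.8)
The plan is to reduce the statement to a single combinatorial fact about how the Gale-minimal basis changes when the minimum of the underlying linear order is moved to the top. Observe that $<_i$ and $<_{i+1}$ agree on $[n]\setminus\{i\}$, while $i$ is the $<_i$-minimum and the $<_{i+1}$-maximum. So it is enough to prove the following: if $<$ is a linear order on $[n]$ with minimum $x$ and $<'$ is obtained from $<$ by moving $x$ to be the maximum, and $B$, $B'$ are the Gale-minimal bases of $M$ with respect to $<$ and $<'$, then $B'=B$ when $x\notin B$, and $B'=B\setminus\{x\}\cup\{y\}$ for some $y\in[n]$ when $x\in B$. Taking $x=i$ then gives exactly the two bullet points in \cref{def:Grassneck}.

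The tool I would use is the matroid greedy algorithm. Process the ground set in increasing order of the given linear order, maintaining a set $S$ that starts empty and to which the current element is added whenever $S$ stays independent; the output $G$ is a basis. A short rank argument shows $G\le_{\mathrm{Gale}}B$ for every basis $B$: writing $G=\{g_1<\cdots<g_k\}$ and $B=\{b_1<\cdots<b_k\}$ sorted in the given order, if $g_j>b_j$ for some index $j$, then every element below $g_j$ lies in the closure of $\{g_1,\dots,g_{j-1}\}$ (that is how greedy rejects an element), so the independent set $\{b_1,\dots,b_j\}$ lies in a flat of rank $j-1$, which is absurd. Hence $G$ is the unique Gale-minimal basis; in particular each $J_i$ is well defined and is computed greedily.

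Now I treat the two cases. If $x\notin B$: the greedy run for $<$ examines $x$ first, and $\{x\}$ is independent unless $x$ is a loop, so $x\notin B$ forces $x$ to be a loop; then the greedy run for $<'$ examines the elements of $[n]\setminus\{x\}$ in the same order from the same (empty) state and finally discards the loop $x$, so $B'=B$. If $x\in B$: run both greedy algorithms in parallel. The $<$-run starts from $\{x\}$ and then scans $a_1<a_2<\cdots$ (the elements of $[n]\setminus\{x\}$, listed in the order they share under $<$ and $<'$), ending at $B$; the $<'$-run scans $a_1<a_2<\cdots$ from $\emptyset$, producing some independent set $H$, and then appends $x$ iff $H\cup\{x\}$ stays independent. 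The crux is the claim $H\supseteq B\setminus\{x\}$, proved by induction on the scan: if at some step the $<$-run adds $a_t$ but the $<'$-run does not, then $a_t$ lies in the closure of $\{a_1,\dots,a_{t-1}\}$, hence in the closure of the current $<$-run set (which already contains $x$ together with all previously scanned $a$'s, up to closure), contradicting that the $<$-run added $a_t$. Given the claim, $H$ is an independent superset of the $(k-1)$-set $B\setminus\{x\}$, so either $H=B\setminus\{x\}$, in which case $H\cup\{x\}=B$ is independent, $x$ is appended, and $B'=B$; or $H=B\setminus\{x\}\cup\{a\}$ is already a basis, $x$ is rejected, and $B'=B\setminus\{x\}\cup\{a\}$. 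Either way $B'=B\setminus\{x\}\cup\{y\}$.

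The step I expect to be the real obstacle is exactly the closure argument inside the claim $H\supseteq B\setminus\{x\}$: a priori the $<'$-greedy has more room (it has not committed to $x$) and could absorb extra elements early, and one must rule out that such elements block some member of $B\setminus\{x\}$; the identity ``closure of the greedy set $=$ closure of everything scanned so far'' is what makes it work. The remaining care is with the degenerate cases, which the above already covers: $x$ a loop falls under $x\notin B$, and $x$ a coloop is the sub-case $H=B\setminus\{x\}$, where $y=x$ and $B'=B$.
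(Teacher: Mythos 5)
Your argument is correct. Note that the paper itself offers no proof of this lemma---it is quoted verbatim from Postnikov's Lemma 16.3---so there is nothing internal to compare against; what you have written is a complete, self-contained proof of the cited result, and it goes by the standard route. The reduction to the single ``move the minimum to the top'' statement is exactly right (and covers $i=n$, since $<_{n}$ to $<_{1}$ is the same operation), the greedy characterization of the Gale-minimal basis is proved correctly via the rank argument, and the two cases are handled cleanly: $x\notin B$ forces $x$ to be a loop, and in the case $x\in B$ the parallel-run comparison with the closure identity $\mathrm{cl}(S)=\mathrm{cl}(\text{scanned elements})$ does establish $H\supseteq B\setminus\{x\}$, after which the dichotomy $|H|\in\{k-1,k\}$ yields $J_{i+1}=J_i\setminus\{i\}\cup\{j\}$ (with $j=i$ permitted, covering the coloop case, consistent with \cref{def:Grassneck}). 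The only cosmetic remark: one could shorten Case 1 by observing that once $x$ is known to be a loop it is invisible to both greedy runs, but your version is fine as written.
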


\begin{thm}[\cite{postnikov2006,oh2017}]\label{thm:basis}
Let $\J = (J_1, \dots, J_n)$ be a Grassmann necklace of type $(k,n)$. Then the collection
$$ \mathcal{B}(\J) := \left\{ B \in \binom{[n]}{k} \mid B \geq_i J_i \text{ for all } i \in [n] \right\} $$
is the collection of bases of a rank $k$ positroid $\mathcal{M}(\J) := ([n], \mathcal{B}(\J))$.
Moreover, for any positroid $M$ we have $\mathcal{M}(\J(M)) = M$.
\end{thm}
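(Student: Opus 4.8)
The plan is to combine Postnikov's bijection between Grassmann necklaces and positroids with Oh's combinatorial description of the bases. First I would fix $i \in [n]$ and set $\B_i := \{ B \in \binom{[n]}{k} \mid B \geq_i J_i \}$, and recall (or reprove) that $\B_i$ is the base set of a matroid --- the Schubert (or shifted) matroid associated to $J_i$ and the order $<_i$ --- either by checking the basis exchange axiom in the $<_i$-Gale order via the standard ``pushing down'' argument, or by exhibiting a $k \times n$ matrix whose columns are generic with respect to the complete flag adapted to the order $<_i$. Then $\B(\J) = \bigcap_{i=1}^{n} \B_i$ is a priori only a set of $k$-subsets, and the content of the theorem is that this \emph{particular} intersection is again the base set of a matroid, moreover a positroid.

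Next I would record the easy inclusion: for any rank $k$ matroid $M$ on $[n]$ and any $i$, every basis $B$ of $M$ satisfies $B \geq_i J_i(M)$ because $J_i(M)$ is by definition the $<_i$-Gale-minimal basis of $M$; hence $\B(M) \subseteq \B(\J(M))$, in particular for positroids. Dually, chasing the defining relations of \cref{def:Grassneck} around the cycle shows that each $J_i$ itself lies in $\B(\J)$, i.e.\ $J_i \geq_j J_j$ for all $i, j$, so $\B(\J)$ is nonempty with $<_i$-minimal element $J_i$; consequently, once $\B(\J)$ is known to be a matroid, its Grassmann necklace is $\J$ again.

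The hard part is the reverse inclusion $\B(\J) \subseteq \B(\mathcal{M}(\J))$ --- that every $k$-set Gale-dominating all of $J_1, \dots, J_n$ is genuinely a basis --- and here I see two routes. One is Postnikov's explicit construction: from $\J$, equivalently from its $\Le$-diagram, build a $k \times n$ matrix $A_{\J}$ with nonnegative maximal minors and prove, by tracking its pivots, that $\Delta_B(A_{\J}) \neq 0$ whenever $B \geq_i J_i$ for all $i$; then $\mathcal{M}(\J) := M(A_{\J})$ is a positroid and the inclusion above forces equality of base sets. The other is induction on $n$ via deletion and contraction: if $n \notin J_n$, then using the relation of \cref{def:Grassneck} at $i = n$ together with the dual fact above, $n$ is a loop of every matroid with necklace $\J$, all the $J_i$ avoid $n$, and $(J_1, \dots, J_{n-1})$ is a Grassmann necklace of type $(k, n-1)$, so we reduce to a smaller positroid with a loop adjoined; if $n \in J_n$, one instead deletes or contracts $n$, each inducing a compatible necklace on $[n-1]$, and concludes from the inductive hypothesis together with a count of $\lvert \B(\J) \rvert$. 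I expect this step to be the main obstacle: intersections of matroids are not matroids in general, so the argument must pin down exactly how the Grassmann necklace compatibility rescues the intersection of the $\B_i$.

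Finally, for the ``moreover'' statement: given a positroid $M$, put $\J := \J(M)$; by the previous steps $\mathcal{M}(\J)$ is a positroid, and by the second paragraph its $<_i$-Gale-minimal basis is $J_i$, so $\J(\mathcal{M}(\J)) = \J = \J(M)$. Since the Grassmann necklace is a complete invariant of positroids (Postnikov) --- equivalently, the matroid of a totally nonnegative matrix is constant on each positroid cell --- two positroids with the same necklace coincide, hence $\mathcal{M}(\J(M)) = M$.
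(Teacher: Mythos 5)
The paper gives no proof of \cref{thm:basis}: it is imported verbatim from \cite{postnikov2006} and \cite{oh2017}, so there is no internal argument to compare yours against. Judged on its own terms, your proposal is a faithful roadmap of how the result is actually established in the literature --- each $\B_i$ is a cyclically shifted Schubert matroid, the inclusion $\B(M)\subseteq\B(\J(M))$ is the easy direction via Gale-minimality of $J_i(M)$, and the ``moreover'' follows once one knows $\mathcal{M}(\J)$ is a positroid with necklace $\J$ and that the necklace is a complete invariant of positroids. Your identification of the crux is also exactly right: the whole difficulty is showing that every $k$-set Gale-dominating all the $J_i$ is genuinely a basis, i.e.\ that this particular intersection of Schubert matroids is again a matroid, and a positroid.

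That crux, however, is precisely where your proposal stops being a proof and becomes a list of candidate strategies, and it is exactly the content of Oh's theorem (Postnikov's conjecture). Your route (a) is morally Oh's argument, but ``build $A_\J$ from the $\Le$-diagram and track its pivots'' hides the real work: one must produce, for \emph{each} $B$ with $B\geq_i J_i$ for all $i$, a nonvanishing minor $\Delta_B(A_\J)$, which Oh does via an explicit flow/matching construction in the associated network (Lindstr\"om--Gessel--Viennot), and the existence of such a flow for every Gale-dominating $B$ is a genuinely nontrivial combinatorial lemma, not a routine pivot count. Your route (b) is shakier still: when $n\in J_n$ you would need both the deletion and the contraction of $n$ to induce Grassmann necklaces on $[n-1]$ whose dominating sets partition $\B(\J)$ correctly, and the cardinality count you allude to is not spelled out; intersections of matroids are not matroids in general, so nothing short of this bookkeeping actually ``rescues'' the intersection. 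So: correct architecture, correct localization of the difficulty, but the central step is acknowledged rather than proved. Since the paper treats the theorem as a citation, the honest fix is to do the same --- cite Oh for the hard inclusion rather than gesture at reproving it.
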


\begin{exem}
Let $\J$ be the Grassmann necklace $(12,23,13,14)$. The bases of the positroid associated to $\J$ is $\{12,13,14,23,24\}$.
\end{exem}

\subsection{Positroid polytopes}

\begin{defi}
Given a matroid $M = ([n],\B)$, the (basis) \emph{matroid polytope} $P_M$ of $M$ is the convex hull of the indicator vectors of the bases of $M$:
$$P_M := \conv \{e_B \mid B \in \B\} \subset \R^n$$
where $e_B = \sum_{i \in B} e_i$ and $\{e_1,\dots,e_n\}$ is the standard basis of $\R^n$.
\end{defi}

The next proposition provides inequalities that define the matroid polytope of a positroid.
\begin{prop}[\cite{positroids,polyposi}]\label{prop:posi}
Let $\J = (J_1, J_2, \dots, J_n)$ be a Grassmann necklace of type $(k,n)$. 
For any $i \in [n]$, suppose the elements of $J_i$ are $a_1^i <_i a_2^i <_i \cdots <_i a_k^i$. Then the matroid polytope $P_\J$ of the positroid associated to $\J$ can be described by the inequalities
\begin{align*}
    x_1+x_2+\cdots+x_n &= k & \\
    x_i &\ge 0 & \text{ for all }i \in [n], \\
    x_i + x_{i+1} + \cdots + x_{a_{j}^{i}-1} &\le j-1 & \text{ for all } i \in [n] \text{ and } j \in [k],
\end{align*}
where all the subindices are taken modulo $n$.
\end{prop}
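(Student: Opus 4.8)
The plan is to write $Q \subseteq \R^n$ for the polytope cut out by the listed (in)equalities and to prove $P_\J = Q$ by double inclusion, the main input being the description of $\mathcal{B}(\J)$ in \cref{thm:basis}. For $S \subseteq [n]$ write $x(S) := \sum_{m \in S} x_m$, and for $i \in [n]$, $j \in [k]$ let $I_{i,j} := \{\ell \in [n] : i \le_i \ell <_i a_j^i\}$, so the third family of inequalities reads $x(I_{i,j}) \le j-1$.

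For $P_\J \subseteq Q$ it suffices, by convexity, to check that each vertex $e_B$ with $B \in \mathcal{B}(\J)$ satisfies the (in)equalities. The equality $x([n]) = k$ and the bounds $x_m \ge 0$ are clear, so fix $i, j$ and suppose toward a contradiction that $|B \cap I_{i,j}| \ge j$. Writing $B = \{b_1 <_i \cdots <_i b_k\}$, its $j$ smallest elements then lie in $I_{i,j}$, so $b_j <_i a_j^i$; but $B \ge_i J_i$ forces $b_j \ge_i a_j^i$, a contradiction. Hence $e_B \in Q$. (Matroid-theoretically this says $\rk_{\mathcal{M}(\J)}(I_{i,j}) \le j-1$, the reverse inequality being witnessed by $\{a_1^i, \dots, a_{j-1}^i\} \subseteq I_{i,j}$.)

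For $Q \subseteq P_\J$, I would establish two facts: (i) $Q$ is a lattice polytope, i.e.\ every vertex of $Q$ is integral; and (ii) every lattice point of $Q$ equals $e_B$ for some $B \in \mathcal{B}(\J)$. Granting these, $\mathrm{vert}(Q) \subseteq \{e_B : B \in \mathcal{B}(\J)\} = \mathrm{vert}(P_\J)$, and since $Q$ is bounded, $Q = \conv(\mathrm{vert}(Q)) \subseteq P_\J$. For (ii): a lattice point $x \in Q$ has nonnegative integer coordinates summing to $k$, and the inequalities with $j \le 2$ (using $x(I_{m,1}) \le 0$ when $m \notin J_m$ and $x(I_{m,2}) \le 1$ when $m \in J_m$, or just $x([n]) = k$ if $k = 1$) force $0 \le x_m \le 1$, so $x = e_B$ for a $k$-subset $B$; the inequalities then give $|B \cap I_{i,j}| \le j-1$ for all $i, j$, and running the argument above in reverse yields $b_m \ge_i a_m^i$ for all $i$ and $m$, i.e.\ $B \ge_i J_i$ for all $i$, hence $B \in \mathcal{B}(\J)$ by \cref{thm:basis}.

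The main obstacle is (i): the sets $I_{i,j}$ are \emph{cyclic} intervals, and the incidence matrix of cyclic intervals (the circular-ones property) need not be totally unimodular. The remedy is to use the equality $x([n]) = k$. Cutting the cyclic order between $n$ and $1$, each $I_{i,j}$ is either a genuine interval $\{a, a+1, \dots, b\}$ or wraps around, in which case its complement is a genuine interval and we may replace $x(I_{i,j}) \le j-1$ by the equivalent inequality $x\big([n] \setminus I_{i,j}\big) \ge k - j + 1$. After this rewriting every row of the constraint system (the equality $x([n]) = k$, the bounds $x_m \ge 0$, and all the interval inequalities) is, up to an overall sign, the indicator vector of a set of consecutive coordinates; such a matrix has the consecutive-ones property and hence is totally unimodular, so the polyhedron it defines with integral right-hand side is a lattice polytope (and $Q$ is bounded). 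This proves (i). Alternatively one can invoke Edmonds' description $P_M = \{x : x([n]) = \rk(M),\ x(S) \le \rk_M(S) \text{ for all } S \subseteq [n]\}$ of a matroid polytope; then $P_\J \subseteq Q$ follows at once from $\rk_{\mathcal{M}(\J)}(I_{i,j}) = j-1$, and the work shifts to showing the listed inequalities imply $x(S) \le \rk_{\mathcal{M}(\J)}(S)$ for every $S$, most cleanly by decomposing $[n] \setminus S$ into its maximal cyclic runs.
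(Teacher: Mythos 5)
Your argument is correct. Note that the paper itself gives no proof of this proposition --- it is quoted from \cite{positroids,polyposi} --- so any complete argument is ``different from the paper's''; what you have written is a valid self-contained proof. The inclusion $P_\J \subseteq Q$ via the Gale-order equivalence $|B \cap I_{i,j}| \le j-1 \iff b_j \ge_i a_j^i$ is exactly right (and your convention makes $I_{i,j}=\emptyset$ when $a_j^i=i$, correctly rendering that inequality vacuous). For the converse, your two-step reduction is sound: the rewriting of wrap-around cyclic intervals via their complements on the hyperplane $x([n])=k$ turns the system into an interval (consecutive-ones) matrix, which is totally unimodular, so $Q$ is a lattice polytope; and your $j\le 2$ argument does force $x_m\le 1$ at lattice points, after which reversing the Gale-order equivalence identifies the lattice points of $Q$ with $\{e_B : B \in \mathcal{B}(\J)\}$ via \cref{thm:basis}. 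By contrast, the cited proofs (and your sketched alternative) run through Edmonds' description $P_M = \{x \ge 0 : x(S) \le \rk_M(S)\}$ together with the fact that for positroids the rank function is determined by its values on cyclic intervals, which one proves by decomposing an arbitrary $S$ into cyclic runs and using submodularity; that route yields the stronger structural statement that cyclic-interval inequalities suffice for \emph{every} positroid-polytope face description, whereas your TU route is more elementary and avoids matroid rank functions entirely at the cost of being specific to this particular inequality system.
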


To write the inequalities more concisely, we will use the following notation. Given $i, j \in [n]$, we define the \emph{(cyclic) interval} $[i,j]$ to be the totally ordered set
$$
[i,j] := \left\{ \begin{matrix}
        \{i <_i i+1 <_i \cdots <_i j\} & \text{ if } i \leq j, \\
        \{i <_i i+1 <_i \cdots <_i n <_i 1 <_i \cdots <_i j\} & \text{ if } i > j,
    \end{matrix}
    \right.
$$
and
$$ x_{[i,j]} = x_i + \cdots + x_{j-1}$$
with all indices modulo $n$.

\subsection{Connected positroids}

\begin{defi}\label{def:connected}
A matroid which cannot be written as the direct sum of two nonempty matroids is called \emph{connected}.
\end{defi}

\begin{lemm}[\cite{positroids}]
Let $M$ be a positroid on $[n]$ and write it as a direct sum of connected matroids $M = M_1 \oplus \cdots \oplus M_m$. Then each $M_i$ is a positroid.
\end{lemm}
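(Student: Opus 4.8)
The plan is to argue directly from the matrix definition, reducing everything to a single-element contraction. If $M = M_1 \oplus \cdots \oplus M_m$ with $M_i$ on ground set $S_i$, then $[n]\setminus S_i$ is a separator of $M$ (a union of connected components), so deletion and contraction of it coincide and $M_i = M|_{S_i} = M/([n]\setminus S_i)$. Hence it suffices to show the class of positroids is closed under contraction, and by contracting one element at a time it is enough to prove: if $N$ is a positroid on a cyclically ordered set and $s$ is any element, then $N/s$ with the induced order is again a positroid. (A set of loops is a positroid by convention, which disposes of any rank-zero block.)

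Fix a matrix $A$, with columns indexed in order by $[n]$ and all maximal minors $\ge 0$, such that $N = M(A)$. If the column $a_s$ is zero, then $s$ is a loop, $N/s = N\setminus s$, and deleting the zero column $a_s$ produces a matrix whose maximal minors form a subset of those of $A$; it represents $N/s$, and we are done. Otherwise pick a complement $W\subseteq\R^k$ of the line $\langle a_s\rangle$, let $\pi\colon\R^k\to W$ be the projection with kernel $\langle a_s\rangle$, and let $A''$ be the matrix, with columns indexed in order by $[n]\setminus\{s\}$, of coordinates of the vectors $\pi(a_j)$, $j\ne s$, in a fixed basis of $W$; then $A''$ represents $N/s$. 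Expanding a maximal minor of $A$ that uses the column $s$ along that column, and comparing with $A''$, gives for every $(k-1)$-subset $I\subseteq[n]\setminus\{s\}$ an identity
\[
\Delta_I(A'') \;=\; \varepsilon(I)\,c_s^{-1}\,\Delta_{I\cup\{s\}}(A),
\]
where $c_s\ne 0$ is a constant independent of $I$ (a change-of-basis determinant) and $\varepsilon(I)=\prod_{j\in I}(-1)^{[j<s]}\in\{\pm1\}$ is the sign of the shuffle of $I$ past $s$.

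The point is that this sign discrepancy can be absorbed without touching the column order: rescaling the $j$-th column of $A''$ by $(-1)^{[j<s]}$ turns each $\Delta_I(A'')$ into $c_s^{-1}\Delta_{I\cup\{s\}}(A)$, all of which now share the sign of $c_s$ (or vanish) because $\Delta_{I\cup\{s\}}(A)\ge 0$; negating one row when $c_s<0$ then makes every maximal minor nonnegative, and neither operation changes the matroid or the order of the columns. This presents $N/s$ as a positroid and completes the induction. I expect the only real obstacle to be exactly this sign bookkeeping, together with the decision to contract rather than delete: deleting the columns indexed by $[n]\setminus S_i$ directly would require a row reduction whose effect on the signs of the maximal minors is uncontrolled — morally the difficulty of deleting a coloop — whereas a single-element contraction cleanly splits the correction into a column rescaling (for the $I$-dependent shuffle sign) and one row negation (for the $I$-independent constant). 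A more geometric alternative is to combine $P_M=P_{M_1}\times\cdots\times P_{M_m}$ with \cref{prop:posi}: every facet of $P_{M_i}$ comes from intersecting $S_i$ with a cyclic-interval facet of $P_M$, such an intersection is again a cyclic interval of $S_i$, so all facets of $P_{M_i}$ have the form appearing in \cref{prop:posi}, which (by the converse direction, also available in the references) exhibits $P_{M_i}$ as a positroid polytope.
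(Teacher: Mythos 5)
Your proposal is correct. Note that the paper itself offers no proof of this lemma --- it is quoted verbatim from Ardila--Rinc\'on--Williams \cite{positroids} --- so what you have written is a self-contained argument where the paper has only a citation; your route (reduce to closure of positroids under single-element contraction, using that $[n]\setminus S_i$ is a separator so that deletion and contraction of it agree) is essentially the standard proof of the cited closure property in that reference. The details check out: writing $B=(a_s\mid w_1\mid\cdots\mid w_{k-1})$, column operations and the change of basis give $\Delta_{I\cup\{s\}}(A)=(-1)^{\#\{j\in I\,:\,j<s\}}\det(B)\,\Delta_I(A'')$, which is exactly your identity with $c_s=\det(B)$ and $\varepsilon(I)=\prod_{j\in I}(-1)^{[j<s]}$; since $\varepsilon(I)^2=1$, the column rescaling by $(-1)^{[j<s]}$ followed by one row negation when $\det(B)<0$ makes all maximal minors nonnegative without disturbing the matroid or the induced column order. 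You are also right that routing everything through contraction sidesteps the only genuinely delicate case of deletion (a coloop, where a row reduction with uncontrolled signs would otherwise be needed), and that loops are handled by the trivial deletion case. The geometric alternative you sketch at the end (facets of $P_{M_i}$ inherit the cyclic-interval form from $P_M=P_{M_1}\times\cdots\times P_{M_m}$, then invoke the converse of \cref{prop:posi}) is also valid but leans on the harder polytopal characterization of positroids; the contraction argument is the more elementary and more self-contained of the two.
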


\begin{rema}\label{rema:reduce_to_connected}
Connected positroids can be characterized in terms of their corresponding \emph{decorated permutations}. Ardila--Rincon--Williams showed that a positroid is connected if and only if its associated decorated permutation is a \emph{stabilized-interval-free} permutation \cite[Cor. 7.9]{positroids}, and they showed that the matroid polytope of a connected positroid on $[n]$ has dimension $n-1$ \cite[Thm. 8.2]{positroids}.

If a matroid $M$ is equal to the direct sum of matroids $M = M_1 \oplus \cdots \oplus M_m$, then the matroid polytope $P_M$ of $M$ is equal to the direct product of the matroid polytopes $P_{M_1} \times \cdots \times P_{M_m}$.

The Ehrhart polynomial of $P_M$ is equal to the product of the Ehrhart polynomials $E(P_M) = E(P_{M_1}) \cdots E(P_{M_m})$.
Thus, to know the $h^*$-polynomial of all positroid polytopes, it suffices to give formulas for all connected positroid polytopes.
From now on, we will focus on connected positroids.
\end{rema}

\section{Circuit triangulation of connected positroid polytopes}
\label{sec:trian}

In this section, we analyze the triangulation of connected positroid polytopes in terms of \emph{$(w)$-simplices}, defined by \cite{parisi2024magic}. Other equivalent characterizations also appear in \cite{alcove1}. We follow the conventions of \cite{parisi2024magic}.

\begin{defi}\label{def:leftcycdes}
Let $w \in S_n$. A letter $i < n$ is a \emph{left descent} of $w$ if $i$ occurs to the right of $i+1$ in $w$. In other words, $w^{-1}(i) > w^{-1}(i+1)$. We say that $i \in [n]$ is a \emph{cyclic left descent} of $w$ if either $i<n$ is a left descent of $w$ or if $i=n$ and 1 occurs to the left of $n$ in $w$, that is, $w^{-1}(1) < w^{-1}(n)$. We let $\cDes_L(w)$ denote the set of cyclic left descents of $w$ and let $\cdes_L(w) = |\cDes_L(w)|$.
\end{defi}

\begin{defi}\label{Dk+1n}
Choose $0 \leq k \leq n-2$. We define $D_{k+1, n}$ to be the set of permutations $w \in S_n$ with $k+1$ cyclic left descents and $w_n=n$. We let $D_n$ be the set of permutations $w \in S_n$ with $w_n=n$. For $w = w_1 \cdots w_n$, let $(w)$ denote the cycle $(w_1, \cdots, w_n)$.
\end{defi}

\begin{rema}\label{rem:cdes}
The definition of cyclic left descent only depends on the total order on $[n]$. That is, given any permutation of any totally ordered set that is not a singleton or empty set, the cyclic left descent of such a permutation can be defined analogously.
This definition coincides with \cite[Def. 6.2]{alcove2} in type A.
\end{rema}

\begin{defi}
Let $w = w_1\cdots w_n \in S_n$ and $i, j \in [n]$.
Let $[i,j]$ denote the cyclic interval defined in \cref{sec:posi}.
Let $w|_{[i,j]}$ be the restriction of $w$ to the totally ordered set $[i,j]$, and let $\cdes_L(w|_{[i,j]})$ be the number of cyclic left descents of $w|_{[i,j]}$, which is well defined by \cref{rem:cdes}.
\end{defi}

\begin{exem}
Let $w = 32415$. Then $w|_{[1,3]} = 321$ and $w|_{[3,1]} = 3415$. Then $\cDes_L(w|_{[1,3]}) = \{1,2\}$ and $\cDes_L(w|_{[3,1]}) = \{5,1\}$
\end{exem}

\begin{defi}\label{def:w-simplex}
For $w = w_1 w_2 \cdots w_n \in S_n$, let $w^{(a)}$ denote the cyclic rotation of $w$ ending at $a$. We define
$$ I_r(w) := \cDes_L(w^{(r)}). $$
Note that $I_r$ only depends on the cycle $(w)$.

We define the $(w)$-simplex
$\Delta_{(w)}$ to be the convex hull of the points $e_{I_1}, \dots, e_{I_n}$; this is an $(n-1)$-dimensional simplex. We call
\[
I_{w_1} \to I_{w_2} \to \cdots \to I_{w_n} \to I_{w_1}
\]
the \emph{circuit} of $\Delta_{(w)}$.
Triangulations by $(w)$-simplices are often called \emph{circuit triangulations}.
\end{defi}

\begin{exem}
The circuit of $32415$ is $135 \to 235 \to 245 \to 124 \to 125 \to 135$. The vertices of $\Delta_{(32415)}$ are $11001, 10101, 01101, 01011, 11010$.
\end{exem}

For a permutation $w = w_1\dots w_n \in S_n$, Stanley \cite{hypercube} defined the simplex
\begin{equation}\label{eq:stanleytriang}
    \nabla_w := \left\{ \y \in [0,1]^n \mid 0 \le y_{w_1} \le \cdots \le y_{w_n} \le 1 \right\}.
\end{equation}
Stanley showed that the $\nabla_w$'s triangulate the hypercube, i.e., $[0,1]^n = \bigcup_{w \in S_n} \nabla_w$

From here on, we essentially follow the conventions of \cite{hypercube,multihyper,AJR20}, but our conventions differ slightly from theirs.

Define a measure preserving map $\phi: [0,1]^n \to [0,1]^n, (y_1,\dots,y_n) \mapsto (x_1,\dots,x_n)$ such that $x_n = 1-y_n$ and
\begin{equation*}
    x_{i+1} = \left\{
    \begin{matrix}
        y_i-y_{i-1} & \text{ if } y_i > y_{i-1} \\
        y_i-y_{i-1} + 1 & \text{ if } y_i < y_{i-1}
    \end{matrix}\right.
\end{equation*}
with inverse
\begin{equation*}
    y_i = 1 + \floor{x_i+\dots+x_n} - (x_i+\dots+x_n).
\end{equation*}

\begin{rema}
In \cite{hypercube}, the map $\phi$ is defined such that $y_1 = x_1$ and $y_i = x_1 + \cdots + x_i - \floor{x_1 + \cdots + x_i}$.
\end{rema}

\begin{lemm}\label{lem:wsimplex}
Let $w \in D_n$. Let $p : \R^n \to \R^{n-1}$ be the projection onto the first $(n-1)$ coordinates.
Then the projected $(w)$-simplex $p(\Delta_{(w)})$ has facet-defining inequalities of the form
\begin{align*}
x_{[w_i, w_{i+1}]} &\ge \cdes_L(w|_{[w_i, w_{i+1}]})-1 & \text{ for } w_i < w_{i+1} \\
x_{[w_{i+1}, w_i]} &\le \cdes_L(w|_{[w_{i+1}, w_i]}) & \text{ for } w_i > w_{i+1},
\end{align*}
for all $i \in [n-2]$ and
\begin{align*}
    x_{[w_1,1]} &\le \cdes_L(w|_{[w_1,n]}) \\
    x_{[w_n,1]} &\ge \cdes_L(w|_{[w_n,n]})-1.
\end{align*}

Moreover, for all $i,j \in [n]$, the projected $(w)$-simplex satisfies
$\cdes_L(w|_{[i,j]})-1 \leq x_{[i,j]} \leq \cdes_L(w|_{[i,j]})$.
\end{lemm}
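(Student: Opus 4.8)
The plan is to argue directly from the vertex description $\Delta_{(w)}=\operatorname{conv}\{e_{I_r}:r\in[n]\}$ with $I_r=\cDes_L(w^{(r)})$, reducing both assertions to two elementary facts about how $\cdes_L$ transforms under cyclic rotation of a word and under restriction to a cyclic sub-interval of the alphabet. Throughout I would use the uniform reformulation of \cref{def:leftcycdes}: for a word $u$ on $[n]$, $c\in\cDes_L(u)$ iff $c$ occurs to the right of its cyclic successor in $u$ (that is, of $c+1$, or of $1$ when $c=n$). First I would record: \emph{(i) Rotation.} If $u'=u_2\cdots u_m u_1$ is the one-step cyclic rotation of a word $u$ on an ordered alphabet of size $\ge 2$, and $a=u_1$, then $\cDes_L(u')=(\cDes_L(u)\setminus\{a-1\})\cup\{a\}$, with arithmetic on ranks taken cyclically; this is a short check using the reformulation, distinguishing only the moving letter $a$ (leftmost in $u$, rightmost in $u'$) from the untouched pairs. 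Two consequences: $|\cDes_L|$ is rotation invariant, so all $I_r$ have a common size $k$ and $\Delta_{(w)}\subseteq\{\sum_l x_l=k\}$; and for any cyclic interval $[i,j]$, the word $w^{(r)}|_{[i,j]}$ is a cyclic rotation of $w|_{[i,j]}$, so $\cdes_L(w^{(r)}|_{[i,j]})=\cdes_L(w|_{[i,j]})$ for all $r$. Next, \emph{(ii) Restriction.} For a cyclic interval $T=[i,j]$ (whose $<_i$-minimum is $i$ and $<_i$-maximum is $j$) and a word $u$ on $[n]$,
\[
\cDes_L(u|_{T})=\bigl(\cDes_L(u)\cap(T\setminus\{j\})\bigr)\;\sqcup\;\bigl(\{j\}\text{ if $i$ occurs left of $j$ in $u$, else }\emptyset\bigr),
\]
because a non-maximal element $c\in T$ has its cyclic successor again in $T$ and restriction preserves the relative order of $c$ and that successor, so $c$ is a left descent of $u|_T$ iff $c\in\cDes_L(u)$, while the maximum $j$ of $T$ contributes a cyclic left descent of $u|_T$ exactly under the stated condition.

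With these in place, the "moreover" bound is immediate. Since $x_{[i,j]}(e_{I_r})=|I_r\cap(T\setminus\{j\})|$ with $T=[i,j]$, applying (ii) to $u=w^{(r)}$ gives $x_{[i,j]}(e_{I_r})=\cdes_L(w^{(r)}|_{[i,j]})-\varepsilon^{[i,j]}_r$, where $\varepsilon^{[i,j]}_r\in\{0,1\}$ equals $1$ iff $i$ occurs left of $j$ in $w^{(r)}$; by (i) the first term is $\cdes_L(w|_{[i,j]})$. Hence every vertex of $\Delta_{(w)}$ satisfies $\cdes_L(w|_{[i,j]})-1\le x_{[i,j]}\le\cdes_L(w|_{[i,j]})$, and since $x_{[i,j]}$ is linear the inequality holds on all of $\Delta_{(w)}$, and therefore on $p(\Delta_{(w)})$: the kernel direction $e_n$ of $p$ is transverse to $\{\sum_l x_l=k\}$, so $p$ is an affine isomorphism from $\Delta_{(w)}$ onto its image, and the functionals $x_{[i,j]}$ remain meaningful there after substituting $x_n=k-\sum_{l<n}x_l$.

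For the facets, note that $p(\Delta_{(w)})$ is an $(n{-}1)$-simplex (the affine image of the simplex $\Delta_{(w)}$), so it has exactly $n$ facets, one omitting each vertex $e_{I_r}$. For $m\in[n]$ (indices mod $n$) let $(a,b)=(w_m,w_{m+1})$ and take the valid inequality "$x_{[a,b]}\ge\cdes_L(w|_{[a,b]})-1$" if $a<b$, and "$x_{[b,a]}\le\cdes_L(w|_{[b,a]})$" if $a>b$. By the formula above, the vertex $e_{I_r}$ fails to be tight for this inequality exactly when $\varepsilon_r$ takes one prescribed value, which in both cases unwinds to the single condition that $w_{m+1}$ occurs to the left of $w_m$ in $w^{(r)}$ (resp. its negation). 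Since $w_{m+1}$ is the immediate cyclic successor of $w_m$ in $(w_1,\dots,w_n)$, tracking positions under cyclic rotation shows this condition holds for exactly one value of $r$, namely $r=w_m$. Thus each of these $n$ inequalities is tight on $n-1$ affinely independent vertices, hence facet-defining, and since the omitted vertices $e_{I_{w_1}},\dots,e_{I_{w_n}}$ are pairwise distinct they are all $n$ facets. Matching the indexing, the pairs $(w_m,w_{m+1})$ for $m=1,\dots,n-2$ give the "$i$-th" inequalities in the statement, while $m=n-1$ and $m=n$ give the two boundary inequalities (using $w_n=n$, so the relevant intervals are $[w_{n-1},n]$ and $[n,w_1]$).

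The main difficulty is not conceptual but bookkeeping: keeping the cyclic conventions exactly straight in the two lemmas — in particular the role of the wrap-around letter $n$ when $n\in T$ in (ii) — and carrying out the position count in the last step that isolates the unique non-tight vertex $e_{I_{w_m}}$ attached to each consecutive pair $(w_m,w_{m+1})$. Everything else is formal.
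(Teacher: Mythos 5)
Your argument is correct, but it takes a genuinely different route from the paper. The paper's proof is essentially a citation: it invokes the identification $\phi(\nabla_{\underline{w}}) = p(\Delta_{(w)})$ from the alcoved-polytope literature and obtains the facet inequalities by pushing the facets of Stanley's simplex $\nabla_{\underline{w}}$ through the piecewise-linear map $\phi$, while the ``moreover'' bounds are quoted as a type A special case of a lemma of Lam--Postnikov. You instead work directly from the vertex description $\Delta_{(w)} = \mathrm{conv}\{e_{I_r}\}$ with $I_r = \cDes_L(w^{(r)})$, and your two elementary facts are both correct: one-step rotation replaces $a-1$ by $a$ in $\cDes_L$ (so $|I_r|$ is constant and $\cdes_L(w^{(r)}|_{[i,j]}) = \cdes_L(w|_{[i,j]})$), and restriction to a cyclic interval $T=[i,j]$ keeps exactly the cyclic left descents in $T\setminus\{j\}$ plus possibly $j$. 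This gives the clean identity $x_{[i,j]}(e_{I_r}) = \cdes_L(w|_{[i,j]}) - \varepsilon_r$ with $\varepsilon_r \in \{0,1\}$, from which the ``moreover'' bounds and the facet description follow simultaneously; your observation that the non-tight vertex for the inequality attached to the consecutive pair $(w_m, w_{m+1})$ is exactly $e_{I_{w_m}}$ (the unique rotation cut between $w_m$ and $w_{m+1}$) correctly shows each inequality is tight on $n-1$ affinely independent vertices and that the $n$ inequalities exhaust the $n$ facets of the simplex. What your approach buys is a self-contained combinatorial proof that needs neither $\phi$ nor the external references and that derives both halves of the lemma from one formula; what it does not give is the identification with Stanley's triangulation via $\phi$, which the paper reuses later (for the half-open simplices in \cref{sec:half-open}), so the citation-based proof is doing extra work beyond this lemma. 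One bookkeeping caveat you should make explicit when writing this up: the two boundary inequalities in the statement are written in the projected coordinates (where $x_n$ has been eliminated via $x_n = k - x_1 - \cdots - x_{n-1}$), and as printed their indices do not literally match the pairs $(w_{n-1}, w_n)$ and $(w_n, w_1)$ that your scheme produces; your intervals $[w_{n-1},n]$ (lower bound) and $[w_1,n]$ (upper bound) are the intended ones, as the paper's $32415$ example confirms, so you should state that translation rather than leave it as ``matching the indexing.''
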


\begin{proof}
It follows from \cite[Theorem 2.7]{alcove1} that $\phi(\nabla_{\underline{w}}) = p(\Delta_{(w)})$.
The facets of $p(\Delta_{(w)})$ are obtained by applying the transformation $\phi$ to the facets of $\nabla_{\underline{w}}$.
This gives the first statement.

The second statement is a type A special case of \cite[Lem. 8.1]{alcove2}.
\end{proof}

\begin{exem}
The projected $32415$-simplex has facet-defining inequalities
\begin{align*}
    x_1+x_2+x_3+x_4 & \ge 2 \\
    x_3 + x_4 & \le 1 \\
    x_2 & \le 1 \\
    x_2+x_3 & \ge 1 \\
    x_1+x_2+x_3 & \le 2
\end{align*}
\end{exem}

\begin{thm}\label{thm:trian}
Let $P_\J$ be any connected positroid polytope, where $\J = (J_1, \dots, J_n)$ is the associated Grassmann necklace. 

For any $i \in [n]$, suppose the elements of $J_i$ are $a_1^i <_i \cdots <_i a_k^i$. Then the positroid polytope $P_\J$ is triangulated by $(w)$-simplices for $w$ in the set
\begin{align*}
    D_\J &:= \{ w \in D_{k+1,n} \mid \cdes_L(w|_{[i,a_j^i]}) \leq j-1 \text{ for all } i \in [n], j \in [k] \} \\
    &= \{ w \in D_{k+1,n} \mid I_{w_i} \geq_j J_j \text{ for all } i,j \in [n] \},
\end{align*}
where $I_{w_1} \to I_{w_2} \to \dots \to I_{w_n} \to I_{w_1}$ is the circuit of $w$.
\end{thm}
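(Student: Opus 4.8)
The plan is to identify the $(w)$-simplices that tile $P_\J$ by matching their defining inequalities (from \cref{lem:wsimplex}) against the facet description of $P_\J$ (from \cref{prop:posi}). We already know from Stanley's triangulation of the hypercube and the map $\phi$ that the projected $(w)$-simplices $p(\Delta_{(w)})$ for $w \in D_n$ tile the whole hypercube $[0,1]^n$ (up to the affine identification $x_1 + \cdots + x_n = k$ slice-by-slice). Since $P_\J$ is a union of faces cut out by the hypersimplex-type inequalities of \cref{prop:posi}, a $(w)$-simplex lies inside $P_\J$ if and only if it satisfies all those inequalities, and then the collection of such simplices automatically forms a triangulation of $P_\J$ because they already form a triangulation of the ambient object and $P_\J$ is a union of cells. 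So the whole theorem reduces to the following claim: for $w \in D_n$, the simplex $p(\Delta_{(w)})$ lies in $P_\J$ if and only if $w \in D_{k+1,n}$ and $\cdes_L(w|_{[i,a_j^i]}) \le j-1$ for all $i,j$.

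First I would handle the dimension/rank condition: $p(\Delta_{(w)})$ lies in the hyperplane $x_1 + \cdots + x_n = k$ exactly when each vertex $I_r(w) = \cDes_L(w^{(r)})$ has size $k$, i.e. $\cdes_L(w^{(r)}) = k$ for all $r$; by \cref{def:w-simplex} and the cyclic-rotation structure this is equivalent to $w$ having $k+1$ cyclic left descents after the normalization $w_n = n$ — that is, $w \in D_{k+1,n}$. (The ``$+1$'' bookkeeping between $\cdes_L(w)$ and the size of the $I_r$'s is exactly the content of \cref{lem:wsimplex}'s last sentence at $j$ equal to the full interval, and I would cite that.) Next, comparing inequalities: \cref{lem:wsimplex} gives that $p(\Delta_{(w)})$ satisfies $\cdes_L(w|_{[i,j]}) - 1 \le x_{[i,j]} \le \cdes_L(w|_{[i,j]})$ for all $i,j$, while \cref{prop:posi} says $P_\J$ is cut out by $x_i \ge 0$ together with $x_{[i, a_j^i]} \le j-1$. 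So $p(\Delta_{(w)}) \subseteq P_\J$ iff $\cdes_L(w|_{[i,a_j^i]}) \le j-1$ for all $i \in [n]$, $j \in [k]$ (the $x_i \ge 0$ conditions are automatic since all $I_r$ are genuine subsets), which is the first description of $D_\J$.

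For the second description, I would translate the cyclic-left-descent bound into a Gale-order statement on the circuit vertices. The key is that $I_{w_i} = \cDes_L(w^{(w_i)})$, and by \cref{lem:wsimplex} the inequality $x_{[j, a]} \le \cdes_L(w|_{[j,a]})$ evaluated at the vertex $e_{I_{w_i}}$ gives $|I_{w_i} \cap [j,a)| \le \cdes_L(w|_{[j,a]})$; setting $a = a_\ell^j$ and using $\cdes_L(w|_{[j,a_\ell^j]}) \le \ell - 1$ shows $|I_{w_i} \cap [j, a_\ell^j)| \le \ell - 1$, which is precisely the Gale-order condition $I_{w_i} \ge_j J_j$ once one unwinds that $J_j = \{a_1^j <_j \cdots <_j a_k^j\}$ and that ``$\ge_j$'' for $k$-sets is equivalent to ``at most $\ell-1$ elements strictly below $a_\ell^j$ in the order $<_j$, for every $\ell$.'' Conversely $I_{w_i} \ge_j J_j$ for all $i$ forces the $\cdes_L$ bounds by the same dictionary applied at $i = w^{-1}(w_i)$ ranging over all vertices. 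I should also note that once the circuit vertices $I_{w_r}$ all satisfy $\ge_j J_j$, \cref{thm:basis} guarantees they are bases of $\mathcal{M}(\J)$, giving the ``only if'' direction of containment as a sanity check.

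The main obstacle I anticipate is the first, global step: rigorously concluding that ``the $(w)$-simplices contained in $P_\J$ triangulate $P_\J$'' from the facts that (a) they triangulate the hypercube and (b) $P_\J$ is a union of closed cells of that triangulation. This requires knowing that $P_\J$, as a face-subcomplex cut out by the inequalities of \cref{prop:posi}, does not slice through the interior of any $(w)$-simplex — equivalently, that every facet inequality $x_{[i,a_j^i]} \le j-1$ of $P_\J$ is a union of facets of the triangulation, which follows from the ``Moreover'' clause of \cref{lem:wsimplex} (each simplex lies on one side of each such hyperplane). Once that is in place the rest is a careful but routine dictionary between cyclic descents, the $I_r$ statistic, and the Gale order; the delicate points there are the off-by-one in the rank condition and making sure the boundary ($i = w_1$ and $i = w_n$) inequalities in \cref{lem:wsimplex} are correctly matched with the cyclic wrap-around in \cref{prop:posi}.
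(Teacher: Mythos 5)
Your argument is correct and is essentially the paper's own (very terse) proof, which simply cites \cref{prop:posi}, \cref{lem:wsimplex} — including its ``Moreover'' clause, which is exactly what you invoke to see that each simplex lies between consecutive integer hyperplanes $x_{[i,j]}=c$ and hence that $P_\J$ is a union of closed cells of the ambient circuit triangulation — and \cref{thm:basis} for the Gale-order reformulation. You have filled in the details the paper leaves implicit; the off-by-one you flag in the rank condition is an indexing inconsistency already present in the paper's own conventions (compare \cref{Dk+1n} with the caption of \cref{fig:hypersimplex25}), not a gap in your argument.
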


\begin{proof}
By \cite[Theorem 8.2]{positroids}, the positroid polytope $P$ associated to $\pi$ has dimension $n-1$.

By \cref{prop:posi} and \cref{lem:wsimplex}, we have that $P$ is triangulated by $(w)$-simplices for $w$ in
$$D_\J = \{ w \in D_{k+1,n} \mid \cdes_L(w|_{[i,a_j^i]}) \leq j-1 \text{ for all } i \in [n], j \in [k] \}.$$
By \cref{thm:basis}, we obtain the equivalent characterization
\[
D_\J = \{ w \in D_{k+1,n} \mid I_{w_i} \geq_j J_j \text{ for all } i,j \in [n] \}. \tag*{\qedhere}
\]
\end{proof}

\begin{figure}
    \centering
    \begin{tikzpicture}
    \coordinate (A) at (0,1);
    \coordinate (B) at (2.5,0);
    \coordinate (C) at (4,1);
    \coordinate (D) at (1.5,2);

    \coordinate (Apex) at (2,5);

    \draw[thick] (A) -- (B) -- (C);
    \draw[thick,dotted] (C) -- (D) --(A);

    \draw[thick] (Apex) -- (A);
    \draw[thick] (Apex) -- (B);
    \draw[thick] (Apex) -- (C);
    \draw[thick,dotted] (Apex) -- (D);
    \draw[thick,dotted] (B) -- (D);

    \node[below left] at (A) {0110};
    \node[below right] at (B) {1010};
    \node[above right] at (C) {1001};
    \node[above left] at (D) {0101};

    \node[above] at (Apex) {1100};
    \end{tikzpicture}
    \caption{The positroid polytope $P_\J$ associated to the Grassmann necklace $(12,23,13,14)$, with bases $\{12,13,14,23,24\}$.}
    \label{fig:pyramid}
\end{figure}
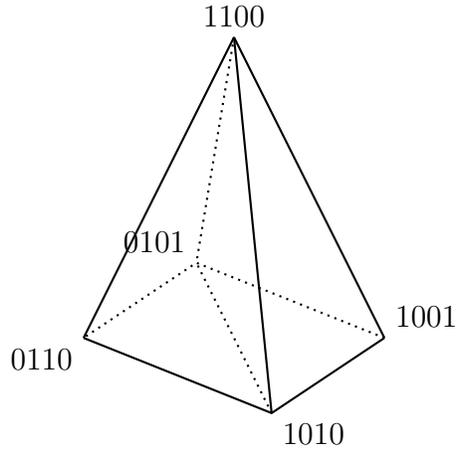

\begin{exem}
Consider the Grassmann necklace $\J = (12,23,13,14)$. Then $D_\J$ consists of permutations in $S_4$ that end with 4 such that $\cdes_L(w|_{[3,4]}) \leq 1$, so $D_\J = \{1324, 2134\}$. The positroid polytope $P_\J$ is a pyramid, as in \cref{fig:pyramid}. The $(1324)$-simplex has vertices $1100,0101,0110,1010$, and the $(2134)$-simplex has vertices $1100,0101,1001,1010$.
\end{exem}

\section{The $h^*$-polynomial of connected positroid polytopes}\label{sec:shelling}

In this section, we prove \cref{thm:main}, which gives the $h^*$-polynomial of an arbitrary connected positroid polytopes.

\begin{defi}
A \emph{shelling} of a triangulation $\Gamma$ is a linear order on its maximal faces $G_1, G_2, \dots, G_s$ such that, for each $i \in [2,s]$, the set $G_i \cap (G_1 \cup \cdots \cup G_{i-1})$ is a union of facets of $G_i$.
\end{defi}

\begin{lemm}[\cite{stanley80}]\label{lem:stanley}
Let $\Gamma$ be a unimodular triangulation of a polytope $P$ and
let $G_1, \dots, G_s$ be a shelling of $\Gamma$. Then the $h^*$-polynomial of $P$ is equal to
$h^*(P,z) = \sum_{i=1}^{s} z^{\alpha_i}$ where
$\alpha_i$ is the number of facets of $G_i$ in the intersection $G_i \cap (G_1 \cup \cdots \cup G_{i-1})$.
\end{lemm}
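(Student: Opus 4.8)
The plan is to use the \emph{half-open decomposition} of $P$ induced by the shelling and then compute the Ehrhart series of each half-open piece by a direct cone computation. Since $\Gamma$ triangulates the $d$-polytope $P$ (with $d = \dim P$), every maximal face $G_i$ is a $d$-simplex. Set $G_1^\circ := G_1$ and $G_i^\circ := G_i \setminus (G_1 \cup \cdots \cup G_{i-1})$ for $i \geq 2$. The two ingredients are: (i) $P$ is the disjoint union $\bigsqcup_{i=1}^{s} G_i^\circ$, and each $G_i^\circ$ is the simplex $G_i$ with a prescribed set of exactly $\alpha_i$ of its $d+1$ facets removed; (ii) for a unimodular $d$-simplex with $\alpha$ of its facets removed, the Ehrhart series is $z^{\alpha}/(1-z)^{d+1}$. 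Granting these, additivity of the Ehrhart series over the disjoint decomposition in (i) gives
\[
\frac{h^*(P,z)}{(1-z)^{d+1}} \;=\; \sum_{t \ge 0} E(P,t)\, z^t \;=\; \sum_{i=1}^{s} \sum_{t \ge 0} E(G_i^\circ, t)\, z^t \;=\; \sum_{i=1}^{s} \frac{z^{\alpha_i}}{(1-z)^{d+1}},
\]
and clearing the common denominator yields $h^*(P,z) = \sum_{i=1}^s z^{\alpha_i}$, as claimed.

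For (i): the shelling hypothesis says that for each $i \geq 2$, the set $G_i \cap (G_1 \cup \cdots \cup G_{i-1})$ is a union $\mathcal{F}_i$ of facets of $G_i$; put $\alpha_i := |\mathcal{F}_i|$ and $\alpha_1 := 0$. One checks that $G_i^\circ$ is precisely $G_i$ with the facets in $\mathcal{F}_i$ (hence also every proper face contained in them) deleted: a point of $G_i$ lies in $G_1 \cup \cdots \cup G_{i-1}$ if and only if it lies in the union of facets $\mathcal{F}_i$. For the partition statement, any $x \in P$ lies in at least one maximal simplex; if $i$ is minimal with $x \in G_i$ then $x \in G_i^\circ$, while for $j > i$ we have $x \in G_i \subseteq G_1 \cup \cdots \cup G_{j-1}$, so $x \notin G_j^\circ$. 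Hence $P = \bigsqcup_i G_i^\circ$ as a set partition, which in particular partitions the lattice points of every dilate $tP$, justifying the additivity used above.

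For (ii): let $\sigma = \conv(v_0, \dots, v_d)$ be a unimodular $d$-simplex, write $F_j$ for the facet opposite $v_j$, and suppose the facets $\{F_j : j \in S\}$ are removed, with $|S| = \alpha$. Lift to height one: put $\hat v_j := (v_j, 1)$ and $C := \mathrm{cone}(\hat v_0, \dots, \hat v_d)$. Unimodularity of $\sigma$ means $\hat v_0, \dots, \hat v_d$ is a $\Z$-basis of the lattice $\Z^{n+1} \cap \mathrm{span}_\R(C)$, so the lattice points of $C$ are exactly the nonnegative integer combinations $\sum_j c_j \hat v_j$, and such a point sits at height $\sum_j c_j$, which equals the dilation factor $t$ for which the corresponding point of $\sigma$ lies in $t\sigma$. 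A point $\sum_j c_j \hat v_j$ lies on $F_j$ iff $c_j = 0$, so the lattice points of $t\sigma$ with the facets $\{F_j : j \in S\}$ removed correspond to $(c_j)$ with $c_j \geq 1$ for $j \in S$ and $c_j \geq 0$ otherwise. Therefore the Ehrhart series of this half-open simplex is
\[
\prod_{j \in S}\Big(\sum_{c \ge 1} z^c\Big)\cdot\prod_{j \notin S}\Big(\sum_{c \ge 0} z^c\Big) \;=\; \left(\frac{z}{1-z}\right)^{\alpha}\left(\frac{1}{1-z}\right)^{d+1-\alpha} \;=\; \frac{z^{\alpha}}{(1-z)^{d+1}}.
\]

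I expect the main obstacle to be the bookkeeping in (i): verifying carefully that $G_i^\circ$ really is a half-open simplex with exactly $\alpha_i$ facets removed — that no lower-dimensional face of $G_i$ lying outside $\mathcal{F}_i$ is absorbed into an earlier simplex, and that the resulting pieces partition each dilate $tP$ with no point double-counted. This is exactly where the precise form of the shelling axiom (the intersection is a \emph{union of facets}, not merely an arbitrary subcomplex) is essential; once it is in place, step (ii) is the routine unimodular cone computation above. It is also worth remarking that the final formula does not depend on the chosen shelling, which recovers the classical statement that $h^*(P,z)$ coincides with the $h$-polynomial of any unimodular triangulation of $P$.
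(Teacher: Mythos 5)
Your proof is correct: the paper does not prove this lemma at all (it is quoted from Stanley's 1980 paper), and your half-open decomposition $G_i^\circ = G_i \setminus (G_1 \cup \cdots \cup G_{i-1})$ together with the unimodular-cone computation is precisely the standard argument behind that citation. The only conventions worth noting are that for $i \ge 2$ the shelling intersections are nonempty (so only $G_1$ contributes to the constant term, giving $h_0 = 1$) and that the $t=0$ term of each half-open piece is taken via the cone/polynomial convention rather than the literal dilate $\{0\}$; your height-graded cone computation already handles both implicitly.
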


\begin{defi}\label{def:graph}
Consider a connected positroid polytope $P_\J$ with Grassmann necklace $\J$.
Let $\Gamma_\J$ be the graph whose vertices are $w \in D_\J$ and there is an edge between $w$ and $u$ if and only if $\Delta_{(w)}$ and $\Delta_{(u)}$ share a common facet.
We call $\Gamma_\J$ the \emph{graph of the circuit triangulation of $P_\J$}.
\end{defi}

\begin{rema}
In the special case of the hypersimplex $\Delta_{k,n}$, the graph $\Gamma_\J$ for 
$$\J = ([1,k],[2,k+1],\dots,[n,k-1])$$
coincides with $\Gamma_{k,n}$ in \cite{alcove1}.
For a generic connected positroid with Grassmann necklace $\J$ of type $(k,n)$, the graph $\Gamma_\J$ is a connected subgraph of $\Gamma_{k,n}$.
\end{rema}

\begin{defi}\label{def:digraph}
Let $\Gamma = (V, E)$ be an undirected graph, and let $v_0 \in V$ be an arbitrary vertex of $\Gamma$. 
Define a partial order $(\mathcal{P}_{v_0,\Gamma},\prec)$ on $V$ with minimal element $v_0$ such that, for two distinct vertices $u, v \in V$, $u \prec v$ if and only if there exists a shortest path from $v_0$ to $v$ passing through $u$.
\end{defi}

In particular, the above definition applies to $\Gamma_\J$ and $w_0 \in D_\J$ for any Grassmann necklace $\J$. 
In this case, we will simplify our notation and denote $\mathcal{P}_{w_0, \Gamma_\J}$ by $\mathcal{P}_{w_0,\J}$.
The following theorem characterizes the edges of $\Gamma_\J$.

\begin{thm}[Theorem 2.9, \cite{alcove1}]\label{thm:edge}
Let $\Gamma_{k,n}$ be the graph of the circuit triangulation of the hypersimplex $\Delta_{k,n}$.
Two simplices $\Delta_{(u)}$ and $\Delta_{(w)}$ of $\Gamma_{k,n}$ are adjacent if and only if there exists $i \in [n]$ such that $u_i - u_{i+1} \neq \pm 1 \bmod n$ and the cycle $(w)$ is obtained from $(u)$ by switching $u_i$ with $u_{i+1}$. Here the indices are modulo $n$ so $u_{n+1} = u_1$ by convention.
\end{thm}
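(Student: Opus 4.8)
The plan is to reduce the claim to a single explicit formula for the vertex sets of the simplices $\Delta_{(w)}$, and then invoke that the circuit triangulation is an honest triangulation of $\Delta_{k,n}$. For the reduction I will use two standard facts about a triangulation of an $(n-1)$-dimensional polytope: (a) two distinct maximal simplices share a facet if and only if they have exactly $n-1$ common vertices (the convex hull of those vertices is then an $(n-2)$-simplex that is a face of each, and in a geometric simplicial complex the intersection of two faces is a common face, hence equals this facet); and (b) each interior codimension-$1$ face lies in exactly two maximal simplices. So it suffices to decide when $\Delta_{(u)}$ and $\Delta_{(w)}$ have exactly $n-1$ common vertices.

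The technical core is to unwind the circuit $I_{w_1}\to\cdots\to I_{w_n}\to I_{w_1}$ of \cref{def:w-simplex}. First I would record the effect of a single left cyclic rotation on $\cDes_L$: if $v=v_1\cdots v_n\in S_n$ and $v'=v_2\cdots v_nv_1$, then $\cDes_L(v')=(\cDes_L(v)\setminus\{v_1-1\})\cup\{v_1\}$ (indices mod $n$), a short check since $(v')^{-1}$ is obtained from $v^{-1}$ by decreasing every position by $1$ cyclically (in particular $v_1-1\in\cDes_L(v)$ and $v_1\notin\cDes_L(v)$ always). Since $w^{(w_{j+1})}$ is the left rotation of $w^{(w_j)}$, which begins with $w_{j+1}$, this shows that the edge $I_{w_j}\to I_{w_{j+1}}$ of the circuit deletes $w_{j+1}-1$ and inserts $w_{j+1}$. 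Tracking when each value belongs to the current vertex around the circuit then gives the description
\[
I_{w_j}=\bigl\{\,m\in[n]\ :\ j\in[\,w^{-1}(m),\,w^{-1}(m+1)\,)\,\bigr\},
\]
where $[\,a,b\,)$ denotes the cyclic interval $a,a+1,\dots,b-1$ of positions and $w^{-1}(m)$ is the position of the value $m$ in $w$. I expect pinning down this formula, together with all its modular wraparounds (and those in the next step), to be the main obstacle; everything afterwards is a direct computation with it.

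Granting the formula, the $n$ facets of $\Delta_{(u)}$ are exactly $F_i=\mathrm{conv}\{I_{u_j}:j\neq i\}$, and $F_i$ is attached to the cyclically consecutive pair $(u_i,u_{i+1})$ (the circuit edge entering $I_{u_i}$ inserts $u_i$, the edge leaving inserts $u_{i+1}$), whose transposition is exactly ``switching $u_i$ with $u_{i+1}$''. If $u_{i+1}\equiv u_i+1$, the formula with $m=u_i$ gives $u_i\in I_{u_j}\iff j=i$, so every vertex of $F_i$ has $u_i$-coordinate $0$, i.e.\ $F_i\subseteq\{x_{u_i}=0\}\subseteq\partial\Delta_{k,n}$; symmetrically $u_{i+1}\equiv u_i-1$ forces $F_i\subseteq\{x_{u_{i+1}}=1\}\subseteq\partial\Delta_{k,n}$. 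Hence a facet coming from a $\pm1$ pair is shared with no other maximal simplex. Conversely, assume $u_i-u_{i+1}\not\equiv\pm1\pmod n$ and let $(w)$ be $(u)$ with $u_i,u_{i+1}$ switched, so that $w^{-1}$ equals $u^{-1}$ with the positions of $u_i$ and $u_{i+1}$ exchanged. Substituting into the formula, a short case analysis --- using that $u_i,u_{i+1},u_i-1,u_{i+1}-1$ are then four distinct values --- shows $I_{u_j}=I_{w_j}$ for all $j\neq i$ while the $i$-th vertices differ. Thus $\Delta_{(u)}$ and $\Delta_{(w)}$ have exactly $n-1$ common vertices, all lying on $\{x_1+\cdots+x_n=k\}$; since every vertex of $\Delta_{(w)}$ has coordinate sum $\cdes_L(w)$ (constant around the cycle by the rotation identity above), this forces $\cdes_L(w)=k$, so $\Delta_{(w)}$ belongs to the circuit triangulation of $\Delta_{k,n}$, and by (a) the simplices $\Delta_{(u)}$ and $\Delta_{(w)}$ are adjacent. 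This proves the ``if'' direction.

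For ``only if'', let $\Delta_{(u)},\Delta_{(w)}\in\Gamma_{k,n}$ be adjacent; their common facet is some $F_i$. Being shared, $F_i$ is interior, so by the previous paragraph the pair $(u_i,u_{i+1})$ satisfies $u_i-u_{i+1}\not\equiv\pm1$, and then $\Delta_{(u')}\in\Gamma_{k,n}$, for $(u')$ obtained by switching $u_i,u_{i+1}$, also contains $F_i$ and is distinct from $\Delta_{(u)}$. By (b), $F_i$ lies in exactly two maximal simplices, so $\Delta_{(w)}=\Delta_{(u')}$, i.e.\ $(w)$ is $(u)$ with $u_i$ and $u_{i+1}$ switched. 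This completes the characterization.
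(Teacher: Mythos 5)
Your argument is correct. Note that the paper does not prove this statement at all --- it is imported verbatim as Theorem 2.9 of the cited reference --- so what you have written is a self-contained proof where the paper offers only a citation. Your route is the natural one: the rotation identity $\cDes_L(v_2\cdots v_nv_1)=(\cDes_L(v)\setminus\{v_1-1\})\cup\{v_1\}$ is correct (and correctly yields that each circuit edge $I_{w_j}\to I_{w_{j+1}}$ deletes $w_{j+1}-1$ and inserts $w_{j+1}$), the resulting interval formula $m\in I_{w_j}\iff j\in[w^{-1}(m),w^{-1}(m+1))$ checks out, and the ensuing case analysis (the $\pm1$ pairs force $F_i$ into $\{x_{u_i}=0\}$ or $\{x_{u_{i+1}}=1\}\subseteq\partial\Delta_{k,n}$, while a non-$\pm1$ swap changes exactly the $i$-th vertex of the circuit) together with the two standard triangulation facts gives both directions, since adjacency in $\Gamma_{k,n}$ is by \cref{def:graph} exactly the sharing of a common facet. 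Two points are used implicitly and deserve a sentence each if this were written out in full: that the circuit triangulation of $\Delta_{k,n}$ consists of \emph{all} $(w)$-simplices with the appropriate number of cyclic left descents (needed to conclude that the swapped simplex is again a cell, and that the map $(w)\mapsto\Delta_{(w)}$ is injective so that $\Delta_{(w)}=\Delta_{(u')}$ forces $(w)=(u')$); both follow from the identification $\phi(\nabla_{\underline w})=p(\Delta_{(w)})$ with Stanley's triangulation, which the paper records in \cref{lem:wsimplex}.
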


\begin{rema}
Let $s_i = (i, i+1)$ for $i \in [n]$ with indices modulo $n$. 
Then $S_n = \langle s_1, \dots, s_n \mid s_i^2, (s_is_j)^2, (s_i s_{i+1})^3, s_1 s_2\cdots s_n \cdots s_2 s_1 \rangle$.
The cycles in $\Gamma_\J$ are generated by the relations among $s_1,\dots,s_n$. \cref{thm:edge} labels the edges of $\Gamma_\J$ by $s_1, \dots, s_n$.
\end{rema}

\begin{defi}[\cite{bjornerbrenti}]
The \emph{affine permutations} of period $n$ are bijections $u$ of $\Z$ such that $u(x+n) = u(x)+n$ for all $x \in \Z$ and $\sum_{x=1}^n u(x) = \binom{n+1}{2}$.
They form a group, denoted by $\tilde{S_n}$, with composition as group operation.
Such a $u$ is uniquely determined by its values on $[n]$, and we write $u = [u(1),\dots, u(n)]$, which we call the \emph{window} notation of $u$.
Let $\tilde{s}_i = [\dots,i+1,i,\dots]$ for $i \in [n-1]$ and $\tilde{s}_n = [0,2,3,\dots,n-1,n+1]$. Then $\tilde{S_n}$ is generated by simple reflections $s_1,\dots,s_n$.

Each affine permutation $w$ can be written as a product of simple reflections $\tilde{s}_1 \cdots \tilde{s}_k$, and if $k$ is minimal among all such expressions for $w$, then $k$ is called the \emph{length} of $w$ (written $\ell(w)=k$), and the word $\tilde{s}_1 \cdots \tilde{s}_k$ is called a \emph{reduced word} for $w$. The \emph{right weak order} on $\tilde{S_n}$ is a partial order such that $u \leq_R w$ if and only if $w = u \tilde{s}_1 \cdots \tilde{s}_k$ such that $\ell(u \tilde{s}_1 \cdots \tilde{s}_i) = \ell(u)+i$ for all $0 \leq i \leq k$.
\end{defi}

\begin{lemm}\label{lem:relabel}
Let $P_\J$ be a connected positroid polytope where $\J$ is the associated Grassmann necklace. Fix $w_0 \in D_\J$. There is a unique injection $L_{w_0}: D_\J \to \tilde{S_n}$ such that $L_{w_0}(w_0) = e$ and $L_{w_0}(w) = \tilde{s}_{i_1} \cdots \tilde{s}_{i_k}$ if there is a path $w_0 \overset{s_{i_1}}{\to} \cdots \overset{s_{i_k}}{\to} w$ in $\Gamma_\J$ for all $w \in D_\J$.
Moreover, for any reduced word $\tilde{s}_{i_1} \cdots \tilde{s}_{i_k}$ of $L_{w_0}(w)$, there is a shortest path $w_0 \overset{s_{i_1}}{\to} \cdots \overset{s_{i_k}}{\to} w$ in $\Gamma_\J$.
\end{lemm}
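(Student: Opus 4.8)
The plan is to build the map $L_{w_0}$ edge by edge and show that it is well-defined, i.e., independent of the chosen path, by an argument on closed walks in $\Gamma_\J$. First I would record the structural facts: by \cref{thm:edge}, each edge of $\Gamma_\J$ (viewed inside $\Gamma_{k,n}$) is naturally labeled by some simple reflection $s_i$, namely the transposition $(i,i+1)$ that is applied to the cycle $(u)$ to obtain $(w)$; this label is an involution in the sense that traversing the edge in either direction uses the same $s_i$. Thus any walk $w_0 = v_0 \overset{s_{i_1}}{\to} v_1 \overset{s_{i_2}}{\to} \cdots \overset{s_{i_k}}{\to} v_k$ produces a word $\tilde s_{i_1}\cdots \tilde s_{i_k}$ in $\tilde S_n$, and reversing a walk inverts the corresponding group element. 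So to define $L_{w_0}(w)$ unambiguously as the element $\tilde s_{i_1}\cdots\tilde s_{i_k}$ coming from \emph{some} path $w_0 \to w$, it suffices to show that every closed walk based at $w_0$ spells the identity in $\tilde S_n$. By composing, it is enough to check this for the closed walks generating the cycle space of $\Gamma_\J$.

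The heart of the argument is therefore identifying the cycles of $\Gamma_\J$ and matching them with the Coxeter relations of $\tilde S_n$. As noted in the remark following \cref{thm:edge}, the closed walks in the circuit-triangulation graph are exactly those forced by the relations among $s_1,\dots,s_n$: a square $s_i s_j s_i s_j$ when $|i-j|\ge 2$ (cyclically), a hexagon $s_i s_{i+1} s_i s_{i+1} s_i s_{i+1}$ coming from the braid relation, and backtracking edges $s_i s_i$. Concretely, a square arises because two transpositions $(i,i+1)$ and $(j,j+1)$ with disjoint supports can be applied to the cyclic word $(u)$ in either order, and each of the four intermediate cycles satisfies the defining inequalities of $D_\J$ from \cref{thm:trian} once the two endpoints do (the inequalities $I_{w_i}\ge_j J_j$ are preserved because swapping positions with non-consecutive values only reshuffles which $I$'s appear, not whether each clears $J_j$); a hexagon arises from the $S_3$ acting on three cyclically consecutive positions, which is the braid relation. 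Since $\tilde S_n$ is presented by exactly the relations $s_i^2$, $(s_is_j)^2$ for non-adjacent $i,j$, $(s_is_{i+1})^3$, together with $s_1 s_2\cdots s_n\cdots s_2 s_1 = e$, and all of these close up inside $\Gamma_\J$ (the last one being a closed walk of "small" moves that is the defining relation of the affine symmetric group), every closed walk spells the identity. This makes $L_{w_0}$ well-defined; injectivity is then immediate, since if $L_{w_0}(w)=L_{w_0}(w')$ then concatenating a path to $w$ with the reverse of a path to $w'$ is a closed walk, hence spells $e$, so $w$ and $w'$ are joined by a walk spelling $e$ — but traversing an edge changes the vertex, and one checks the only walk spelling $e$ between honest vertices of the triangulation returns to its start, forcing $w=w'$. (Equivalently: the map $w\mapsto$ the corresponding alcove/simplex is injective, and $L_{w_0}(w)$ determines that simplex as the image of $\Delta_{(w_0)}$ under the affine Weyl group element, by \cite{alcove1}.)

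For the "moreover" clause I would argue as follows. Fix $w\in D_\J$ and set $g = L_{w_0}(w)$; by the first part there is a path $P$ from $w_0$ to $w$ spelling some (not necessarily reduced) word $\tilde s_{j_1}\cdots\tilde s_{j_m}$ for $g$. The length of the shortest path from $w_0$ to $w$ equals the graph distance $d(w_0,w)$; I claim $d(w_0,w) = \ell(g)$. One inequality is clear: any path spells a word for $g$, so $d(w_0,w)\ge \ell(g)$ is false in general — rather, a shortest path spells a word of length $d(w_0,w)$ for $g$, hence $d(w_0,w)\ge\ell(g)$. For the reverse, given a reduced word $\tilde s_{i_1}\cdots\tilde s_{i_k}$ of $g$, I need to exhibit an actual path in $\Gamma_\J$ spelling it; this is the main obstacle, because a priori applying $s_{i_1}, s_{i_2},\dots$ to the cycle $(w_0)$ might leave $D_\J$ or might hit a pair $u_i - u_{i+1}\equiv\pm1$, in which case \cref{thm:edge} gives no edge. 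The resolution is the standard fact, available from \cite{alcove1,parisi2024magic}, that the $(w)$-simplices for $w\in D_\J$ are exactly the alcoves contained in $p(P_\J)$, and that multiplying the base alcove by $g$ along a reduced word corresponds to a sequence of alcove-adjacent steps all of which stay inside the convex region $p(P_\J)$ precisely because $P_\J$ is convex (so the straight-line alcove walk realizing the reduced word never exits); translating this alcove walk back through \cref{thm:edge} gives the desired path, and since it has length $k=\ell(g)\le d(w_0,w)$, it is a shortest path. Combining the two inequalities, $d(w_0,w)=\ell(g)$ and every reduced word of $g=L_{w_0}(w)$ is realized by a shortest path, completing the proof. $\qed$
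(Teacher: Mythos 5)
Your overall strategy (define $L_{w_0}$ along paths, reduce well-definedness to checking that closed walks spell the identity in $\tilde S_n$, and get the ``moreover'' clause from convexity via alcove walks) is the same as the paper's, and the treatment of the commutation squares, braid hexagons, and the second statement is fine in spirit. But there is a genuine error at the single most delicate point of the well-definedness argument: you assert that $\tilde S_n$ ``is presented by exactly the relations $s_i^2$, $(s_is_j)^2$, $(s_is_{i+1})^3$, together with $s_1s_2\cdots s_n\cdots s_2s_1=e$,'' and that this last relation ``closes up inside $\Gamma_\J$.'' Both halves of this claim are false, and in fact each is the opposite of what is true. The relation $s_1s_2\cdots s_n\cdots s_2s_1=e$ does \emph{not} hold in the affine symmetric group $\tilde S_n$ (imposing it collapses the infinite group $\tilde S_n$ onto the finite group $S_n$ with the extra cyclic generator $s_n=(n,1)$ --- this is exactly the presentation of $S_n$ recorded in the remark after \cref{thm:edge}). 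Consequently, if a closed walk in $\Gamma_\J$ spelled this relation, the two halves of that walk would give two \emph{different} elements of $\tilde S_n$ and $L_{w_0}$ would be ill-defined. The necessary step --- which the paper supplies and your argument omits --- is to prove that no cycle of $\Gamma_\J$ realizes this relation: such a cycle would force a vertex $(u)$ with $u_1-u_i\not\equiv\pm1\pmod n$ for all $i$, which is impossible since the values $u_1\pm1$ must occur somewhere in the cyclic word. As written, your proof would ``verify'' well-definedness even in a situation where the map fails to be well-defined, so the gap is not cosmetic.

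Two smaller points. First, your justification that the intermediate vertices of a commutation square lie in $D_\J$ (``swapping positions with non-consecutive values only reshuffles which $I$'s appear'') is not an argument --- the sets $I_{w_i}$ genuinely change under the swap, and in any case the paper's logic does not require all such squares to exist in $\Gamma_\J$, only that whatever cycles do exist decompose into relations that lift to $\tilde S_n$. Second, in the ``moreover'' paragraph the sentence establishing $d(w_0,w)\ge\ell(g)$ contradicts itself midway before arriving at the right conclusion; the intended argument (a shortest path spells \emph{some} word for $g$, hence has length at least $\ell(g)$) is correct and should be stated cleanly. The converse inequality via convexity of $P_\J$ and alcove walks matches the paper's appeal to \cite[Prop.~3.5]{alcove2}.
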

\begin{proof}
We only need to show that $L_{w_0}$ is well-defined. Suppose there is another path $w_0 \overset{s_{j_1}}{\to} \cdots \overset{s_{j_m}}{\to} w$ from $w_0$ to $w$. Then $w_0 \overset{s_{i_1}}{\to} \cdots \overset{s_{i_k}}{\to} w \overset{s_{j_m}}{\to} \cdots \overset{s_{j_1}}{\to} w_0$ form a cycle in $\Gamma_\J$. 
If the cycle is generated by $(s_is_j)^2$ for $i-j \neq \pm 1 \bmod n$ and $(s_is_{i+1})^3$, then $\tilde{s}_{i_1} \cdots \tilde{s}_{i_k} = \tilde{s}_{j_1} \cdots \tilde{s}_{j_m}$ in $\tilde{S_n}$.

We argue that there is no cycle in $\Gamma_\J$ that corresponds to the relation $s_1 s_2 \cdots s_{n-1} s_n s_{n-1} \cdots s_2 s_1$ among $s_1, \dots, s_n$.
Suppose not, then there is a cycle $(u)$ such that $u_1 - u_i \neq \pm 1 \bmod n$ for all $i \in [n]$, which is impossible.

The second statement follows from the convexity of $P_\J$ by \cite[Prop. 3.5]{alcove2}.
\end{proof}

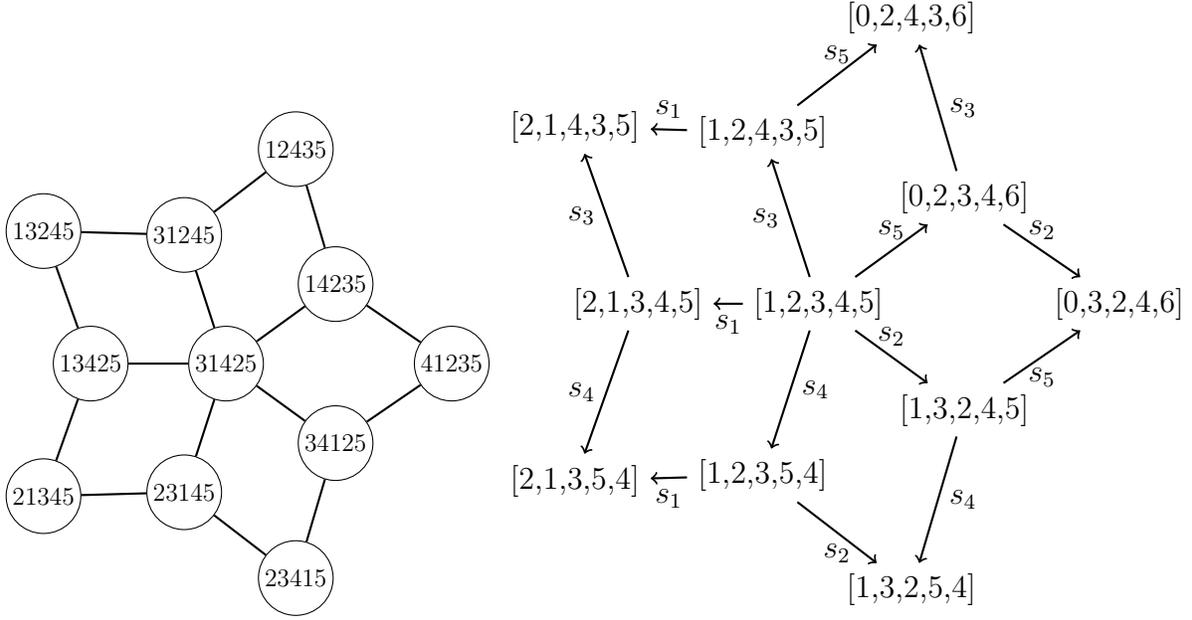
\begin{figure}
    \centering
    \begin{tikzpicture}[scale=3,every node/.style={draw, circle, inner sep=2pt, scale=.8}]
    \node (4123) at (0:1) {41235}; 
    \node (1243) at (72:1) {12435}; 
    \node (1324) at (144:1) {13245};
    \node (2134) at (216:1) {21345};
    \node (2341) at (288:1) {23415}; 
    
    \node (1423) at (36:0.6) {14235}; 
    \node (3124) at (108:0.6) {31245}; 
    \node (1342) at (180:0.6) {13425};
    \node (2314) at (252:0.6) {23145};
    \node (3412) at (324:0.6) {34125}; 

    \node (3142) at (0,0) {31425};

    \draw[thick] (3142) -- (1423);
    \draw[thick] (3142) -- (3124);
    \draw[thick] (3142) -- (1342);
    \draw[thick] (3142) -- (2314);
    \draw[thick] (3142) -- (3412);

    \draw[thick] (3124) -- (1324);
    \draw[thick] (3124) -- (1243);
    
    \draw[thick] (1423) -- (4123);
    \draw[thick] (1423) -- (1243);
    
    \draw[thick] (3412) -- (4123);
    \draw[thick] (3412) -- (2341);
    
    \draw[thick] (2314) -- (2134);
    \draw[thick] (2314) -- (2341);

    \draw[thick] (1342) -- (2134);
    \draw[thick] (1342) -- (1324);
    
\end{tikzpicture}
\begin{tikzpicture}[scale=4]
    \node (4123) at (0:1) {[0,3,2,4,6]}; 
    \node (1243) at (72:1) {[0,2,4,3,6]}; 
    \node (1324) at (144:1) {[2,1,4,3,5]};
    \node (2134) at (216:1) {[2,1,3,5,4]};
    \node (2341) at (288:1) {[1,3,2,5,4]}; 
    
    \node (1423) at (36:0.6) {[0,2,3,4,6]}; 
    \node (3124) at (108:0.6) {[1,2,4,3,5]}; 
    \node (1342) at (180:0.6) {[2,1,3,4,5]};
    \node (2314) at (252:0.6) {[1,2,3,5,4]};
    \node (3412) at (324:0.6) {[1,3,2,4,5]}; 

    \node (3142) at (0,0) {[1,2,3,4,5]};

    \draw[thick,->] (3142) -- (1423) node[midway, above] {$s_5$};
    \draw[thick,->] (3142) -- (3124) node[midway,left] {$s_3$};
    \draw[thick,->] (3142) -- (1342) node[midway,below] {$s_1$};
    \draw[thick,->] (3142) -- (2314) node[midway,right] {$s_4$};
    \draw[thick,->] (3142) -- (3412) node[midway,above] {$s_2$};

    \draw[thick,->] (3124) -- (1324) node[midway,above] {$s_1$};
    \draw[thick,->] (3124) -- (1243) node[midway,above] {$s_5$};
    
    \draw[thick,->] (1423) -- (4123) node[midway,above] {$s_2$};
    \draw[thick,->] (1423) -- (1243) node[midway,right] {$s_3$};
    
    \draw[thick,->] (3412) -- (4123) node[midway,below] {$s_5$};
    \draw[thick,->] (3412) -- (2341) node[midway,right] {$s_4$};
    
    \draw[thick,->] (2314) -- (2134) node[midway,below] {$s_1$};
    \draw[thick,->] (2314) -- (2341) node[midway,below] {$s_2$};

    \draw[thick,->] (1342) -- (2134) node[midway,left] {$s_4$};
    \draw[thick,->] (1342) -- (1324) node[midway,left] {$s_3$};
    
\end{tikzpicture}
 \caption{On the left, we show the graph $\Gamma_{2,5}$ of the circuit triangulation of the hypersimplex $\Delta_{2,5}$. The vertices of $\Gamma_{2,5}$ are labeled by permutations $w \in D_{3,5}$ in one-line notation. On the right, we relabel the vertices $w$ of $\Gamma_{2,5}$ by affine permutations $L_{w_0}(w)$ in window notation with $w_0 = 31425$ according to \cref{lem:relabel}. The arrows represent cover relations in the poset $\mathcal{P}_{31425,\J}$ for $\J = (12,23,34,45,51)$, pointing from a smaller element to a bigger element.}
\label{fig:hypersimplex25}
\end{figure}

\begin{rema}
We can embed any connected positroid polytope into an \emph{affine Coxeter arrangement}, which makes \cref{lem:relabel} obvious; see \cite{humphreys}.
By \cref{lem:relabel}, the partial order $\mathcal{P}_{w_0,\J}$ is identified with the restriction of the weak order on a subset of $\tilde{S_n}$.
\end{rema}

\begin{lemm}\label{lem:facets}
Let $P_\J$ be a connected positroid polytope where $\J$ is the associated Grassmann necklace. 
Let $u, w \in D_\J$. 
Let $J$ be the minimal set of simple reflections that generate $L_u(w)$ in $\tilde{S_n}$ with $|J| = m$. Then the intersection between $\Delta_{(w)}$ and $\Delta_{(u)}$ is $\Delta_{(w)} \cap \Delta_{(u)} = \conv(e_{I_{w_i}} \mid i \notin J)$ with codimension $m$.
\end{lemm}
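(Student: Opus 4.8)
The plan is to argue by induction on $\ell := \ell\bigl(L_u(w)\bigr)$, peeling off one simple reflection at a time along a reduced word and using the elementary fact that the intersection of two faces of a simplex is the face on their common vertex set. Write $v := L_u(w)$. The set $J$ in the statement is the \emph{support} of $v$: the set of simple reflections occurring in any (equivalently, every) reduced word for $v$, equivalently the unique smallest $S'\subseteq\{s_1,\dots,s_n\}$ with $v\in\langle S'\rangle$. In particular $\lvert\{i\in[n]:s_i\notin J\}\rvert=n-m$. The base case $\ell=0$ is the tautology $\Delta_{(w)}=\Delta_{(w)}$ (here $w=u$, $J=\emptyset$), of codimension $0$.

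The first real step is a one-edge lemma: if $w',w\in D_\J$ are joined by an edge labeled $s_j$, then $\Delta_{(w')}\cap\Delta_{(w)}$ is the common facet $\conv(e_{I_{w_i}(w)}:i\ne j)=\conv(e_{I_{w'_i}(w')}:i\ne j)$, and moreover $I_{w_i}(w)=I_{w'_i}(w')$ for every $i\ne j$. By \cref{thm:edge}, $(w)$ is obtained from $(w')$ by transposing the two values $w'_j,w'_{j+1}$, which are not cyclically consecutive. Since a cyclic left descent at index $i$ only records the relative order of the cyclically consecutive pair $\{i,i+1\}$ (or of $\{1,n\}$), transposing two adjacent entries of a word whose values are not cyclically consecutive preserves $\cDes_L$. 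For $r\notin\{w'_j,w'_{j+1}\}$ the rotations $w^{(r)}$ and $(w')^{(r)}$ differ by exactly such a transposition, so $I_r(w)=I_r(w')$; and $w^{(w'_j)}$ is $(w')^{(w'_{j+1})}$ with its last two entries transposed, so $I_{w'_j}(w)=I_{w'_{j+1}}(w')$. Matching vertices by their position in the two circuits gives $I_{w_i}(w)=I_{w'_i}(w')$ for $i\ne j$, so $\Delta_{(w')}$ and $\Delta_{(w)}$ have exactly these $n-1$ vertices in common; as both are maximal simplices of the triangulation \cref{thm:trian}, their intersection is the convex hull of the shared vertices, a common facet.

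For the inductive step ($\ell\ge1$), choose a right descent $s_j$ of $v$, set $v':=vs_j$ (so $\ell(v')=\ell-1$ and $J=S(v')\cup\{s_j\}$, since a reduced word for $v'$ followed by $\tilde s_j$ is one for $v$), and let $w':=L_u^{-1}(v')\in D_\J$; by \cref{lem:relabel} there is a shortest path $u\to\cdots\to w'\xrightarrow{s_j}w$ in $\Gamma_\J$. Let $H$ be the affine span of the facet $\Delta_{(w')}\cap\Delta_{(w)}$. Realizing the triangulation inside the affine Coxeter arrangement of $\tilde{S_n}$ (cf.\ the remark preceding the lemma and \cite{humphreys}), this shortest path is a minimal gallery from $\Delta_{(u)}$ to $\Delta_{(w)}$, and a minimal gallery crosses each wall exactly once; it crosses $H$ only at its final step, so $\Delta_{(u)}$ and $\Delta_{(w')}$ lie on the same side of $H$, and $\Delta_{(w)}$ on the opposite side. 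Hence $\Delta_{(w)}\cap\Delta_{(u)}\subseteq H$, and since it is also inside $\Delta_{(w)}$ it lies in $H\cap\Delta_{(w)}=\Delta_{(w')}\cap\Delta_{(w)}\subseteq\Delta_{(w')}$, so
\[
\Delta_{(w)}\cap\Delta_{(u)}=\bigl(\Delta_{(w')}\cap\Delta_{(w)}\bigr)\cap\bigl(\Delta_{(w')}\cap\Delta_{(u)}\bigr).
\]
Both factors are faces of the simplex $\Delta_{(w')}$ — the first is $\conv(e_{I_{w'_i}(w')}:i\ne j)$ by the edge lemma, the second is $\conv(e_{I_{w'_i}(w')}:s_i\notin S(v'))$ by induction — so their intersection is the face on the common index set $\{i:i\ne j,\ s_i\notin S(v')\}=\{i:s_i\notin J\}$. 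Every such $i$ satisfies $i\ne j$, so the edge lemma lets me rewrite $I_{w'_i}(w')$ as $I_{w_i}(w)$, yielding $\Delta_{(w)}\cap\Delta_{(u)}=\conv(e_{I_{w_i}(w)}:s_i\notin J)$. These $n-m$ vectors are affinely independent (they are vertices of $\Delta_{(w)}$), so this face has dimension $n-m-1$, i.e.\ codimension $m$.

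The step I expect to be the main obstacle is the geometric claim that $\Delta_{(u)}$ and $\Delta_{(w')}$ lie on the same side of $H$ — equivalently, that the path supplied by \cref{lem:relabel} does not re-cross the wall $H$ before its last step. This is exactly the ``minimal galleries cross each wall once'' property of Coxeter complexes, and it is the place where convexity of $P_\J$ is essential: convexity is what guarantees, via \cite[Prop. 3.5]{alcove2}, that the shortest path between $\Delta_{(u)}$ and $\Delta_{(w)}$ can be taken inside $D_\J$ and hence is a genuine minimal gallery in the ambient affine arrangement. The remaining fussy point is the position-matching $I_{w_i}(w)=I_{w'_i}(w')$ at the transposed index $i=j+1$ in the edge lemma, which is what the rotation identity relating $w^{(w'_j)}$ to $(w')^{(w'_{j+1})}$ takes care of.
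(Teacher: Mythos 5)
Your proposal is correct and follows essentially the same strategy as the paper's proof: induct along a reduced word for $L_u(w)$, using the fact that each edge labeled $s_i$ changes exactly one vertex (the one at position $i$) of the circuit. The paper's version is a three-line sketch that only makes explicit the containment $\conv(e_{I_{w_i}} \mid i \notin J) \subseteq \Delta_{(w)} \cap \Delta_{(u)}$; your wall-separation argument (minimal galleries cross each wall once, so $\Delta_{(u)}$ and $\Delta_{(w)}$ lie on opposite sides of the last wall) supplies the reverse containment that the paper leaves implicit, so your write-up is a more complete rendering of the same idea rather than a different route.
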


\begin{proof}
Suppose $I_{u_1} \to I_{u_2} \to \cdots \to I_{u_n} \to I_{u_1}$ is the circuit of $u$.
Then the circuit of $u s_i$ only differs from that of $u$ at $I_{u_i}$.
As the circuit of $u$ gives the vertices of $\Delta_{(u)}$, the statement follows from induction.
\end{proof}

\begin{prop}\label{prop:shelling}
Consider a connected positroid polytope $P_\J$ with Grassmann necklace $\J$. 
Let $\Gamma_\J$ be the graph of the circuit triangulation of $P_\J$.
For any $w_0 \in D_\J$,
any linear extension of $\mathcal{P}_{w_0, \J}$ is a shelling of the circuit triangulation of $P_\J$.
\end{prop}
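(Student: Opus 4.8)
The plan is to verify the shelling condition directly from the combinatorial description of faces supplied by \cref{lem:facets}, using the affine-permutation relabeling of \cref{lem:relabel}. Fix $w_0 \in D_\J$ and identify each $w \in D_\J$ with the affine permutation $L_{w_0}(w) \in \tilde{S_n}$; under this identification $\mathcal{P}_{w_0,\J}$ becomes the restriction of right weak order to the image $L_{w_0}(D_\J)$, and the distance from $w_0$ to $w$ in $\Gamma_\J$ equals $\ell(L_{w_0}(w))$ by the second part of \cref{lem:relabel}. Let $G_1, G_2, \dots, G_s$ be a linear extension of $\mathcal{P}_{w_0,\J}$, with $G_i = \Delta_{(w^{(i)})}$; since the order is a linear extension of weak order, $\ell(L_{w_0}(w^{(1)})) \le \ell(L_{w_0}(w^{(2)})) \le \cdots$, and in particular $G_1 = \Delta_{(w_0)}$.

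The heart of the argument is a local claim: for each $i \ge 2$, writing $w = w^{(i)}$ and $v = L_{w_0}(w)$ with $\ell(v) = \ell$, the union $G_i \cap (G_1 \cup \cdots \cup G_{i-1})$ equals the union of those facets of $G_i$ corresponding, via \cref{lem:facets}, to the simple reflections $\tilde s_j$ with $\ell(v \tilde s_j) < \ell(v)$ (the right descents of $v$, intersected with the labels that actually occur as edges at $w$ in $\Gamma_\J$). One inclusion is easy: if $\tilde s_j$ is such a descent, then $v \tilde s_j$ has smaller length, hence (using that $\Gamma_\J$ is a convex subgraph, \cite[Prop. 3.5]{alcove2}, so that $w \overset{s_j}{\to} u$ with $u \in D_\J$ and $L_{w_0}(u) = v\tilde s_j$) the neighbor $u$ precedes $w$ in every linear extension of $\mathcal{P}_{w_0,\J}$; by \cref{lem:facets} the shared face $G_i \cap \Delta_{(u)}$ is the facet of $G_i$ indexed by $j$, and it already lies in $G_1 \cup \cdots \cup G_{i-1}$. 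For the reverse inclusion, suppose a point $x$ lies in $G_i \cap G_t$ for some $t < i$; by \cref{lem:facets} applied to the pair $(w^{(t)}, w)$, $x$ lies in $\conv(e_{I_{w_\ell}} \mid \ell \notin J')$ where $J'$ is the minimal generating set of $L_{w_0}(w^{(t)})^{-1} L_{w_0}(w) = v'^{-1} v$ with $v' = L_{w_0}(w^{(t)})$. Since $\ell(v') \le \ell(v)$ and $v' \ne v$, standard weak-order/exchange-condition facts in $\tilde S_n$ give that $v'^{-1} v$ is nontrivial and that $J'$ contains at least one simple reflection $\tilde s_j$ which is a right descent of $v$ (because $v' \le_R v$ fails exactly when some descent is "used up," and one checks $v'$ and $v$ cannot share all the relevant data otherwise); thus $x$ lies in the facet of $G_i$ indexed by that descent $j$. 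Assembling, $G_i \cap (G_1 \cup \cdots \cup G_{i-1})$ is exactly a union of facets of $G_i$, which is the defining property of a shelling.

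The step I expect to be the main obstacle is the reverse inclusion in the local claim, specifically controlling $J' = $ (support of $v'^{-1}v$) when $v'$ is an \emph{arbitrary} earlier element of the linear extension rather than a weak-order-cover of $v$. The subtlety is that $v'^{-1}v$ need not be a single simple reflection, so $G_i \cap G_t$ could a priori be a low-dimensional face not contained in any single facet of $G_i$; one must show that whenever $\ell(v') \le \ell(v)$ with $v' \neq v$, the face $\conv(e_{I_{w_\ell}} \mid \ell \notin J')$ is nonetheless contained in some facet of $G_i$, i.e.\ that $J' \cap \cDes_R(v) \ne \emptyset$. I would prove this by induction on $\ell(v)$: pick any $\tilde s_j$ with $\ell(v\tilde s_j) < \ell(v)$; either $j \in J'$ (done) or, using the lifting/exchange property of weak order in the affine Coxeter group $\tilde S_n$ together with convexity of $\Gamma_\J$, replace $(v', v)$ by $(v'\tilde s_j \text{ or } v', v\tilde s_j)$ — a pair with strictly smaller second length and the same shared face up to the facet indexed by $j$ — and invoke the inductive hypothesis. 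The remaining bookkeeping (that edges labeled by $j$ genuinely exist in $\Gamma_\J$ at the vertices in question, guaranteed by \cref{thm:edge} and the convexity in \cref{lem:relabel}) is routine.
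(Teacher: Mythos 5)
Your strategy is the same as the paper's: describe the candidate union of facets $\mathcal{F}$ via \cref{lem:facets}, get the easy inclusion from the cover relations, and for the reverse inclusion show that the support $J'$ of $v'^{-1}v$ (where $v'=L_{w_0}(w^{(t)})$, $v=L_{w_0}(w^{(i)})$) must contain a suitable simple reflection. But there are two genuine gaps. The first is that you assume a linear extension of $\mathcal{P}_{w_0,\J}$ satisfies $\ell(L_{w_0}(w^{(1)}))\le\ell(L_{w_0}(w^{(2)}))\le\cdots$. That is false: a linear extension only constrains comparable elements, and incomparable elements of different lengths may appear in either order (in \cref{fig:hypersimplex25} one may legitimately list $[0,2,4,3,6]$ before $[1,2,3,5,4]$). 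So your key step ``$\ell(v')\le\ell(v)$ and $v'\neq v$ imply $J'$ meets $\{\tilde{s}_j:\ell(v\tilde{s}_j)<\ell(v)\}$'' starts from an unavailable hypothesis. The repair is order-theoretic, not length-theoretic, and is what the paper does: if $J'$ consists only of right ascents of $v$, then $v'=v\cdot(v^{-1}v')$ with lengths adding, so by the second statement of \cref{lem:relabel} there is a shortest path from $w_0$ to $w^{(t)}$ through $w^{(i)}$, hence $w^{(i)}\prec w^{(t)}$, contradicting $t<i$.

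The second and more substantive gap is that producing a right descent $\tilde{s}_j\in J'$ is not enough. The shelling condition requires the intersection to be a union of facets each \emph{entirely contained} in $G_1\cup\cdots\cup G_{i-1}$, so the descent you find must also index an actual edge of $\Gamma_\J$, i.e.\ satisfy $(w^{(i)})_j-(w^{(i)})_{j+1}\not\equiv\pm1\bmod n$ as in \cref{thm:edge}. A right descent violating this (the paper's set $S_2$) corresponds to a facet lying on the boundary of the triangulation, shared with no other simplex; placing your point in that facet does not put it in $\bigcup\mathcal{F}$ and the argument does not close. The paper devotes a separate case to showing that $J'$ cannot meet $S_2$ while avoiding $S_1$: a reduced word for $L_{w_j}(w_i)$ ending in a reflection of $S_2$ cannot be realized as a path in $\Gamma_\J$, contradicting \cref{lem:relabel}. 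Your proposal never draws this distinction, and the induction you sketch (peeling off arbitrary descents) would not detect it. Finally, for extracting the ``last reflection,'' the parabolic factorization $u^{S_1}u_{S_1}$ from \cite[Prop.~2.4.4]{bjornerbrenti}, as used in the paper, is a cleaner and fully rigorous substitute for the lifting-property induction you leave unproved.
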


\begin{rema}
Benedetti--Knauer--Valencia-Porras proved this for general type A alcoved polytopes in \cite[Prop. 2.5]{benedetti2023latticepathmatroidpolytopes} using a more geometric argument.
\end{rema}

\begin{proof}
Let $w_0, w_1, \dots, w_s$ be a linear extension of $(\mathcal{P}_{w_0,\J},\prec)$. 

Fix $i \in [s]$.
Let $S_1$ be the set of $\tilde{s}_k$ for $k \in [n]$ such that $L_{w_0}(w_i) \tilde{s}_k < L_{w_0}(w_i)$ in the weak order of $\tilde{S_n}$ and $(w_i)_k - (w_i)_{k+1} \neq \pm 1 \bmod n$. In other words, there exists $u \in D_\J$ such that $u \precdot w_i$ and $L_{w_0}(u) = L_{w_0}(w) \tilde{s}_k$.
Let $S_2$ be the set of $\tilde{s}_k$ for $k \in [n]$ such that $L_{w_0}(w_i) \tilde{s}_k < L_{w_0}(w_i)$ but $(w_i)_k - (w_i)_{k+1} \equiv \pm 1 \bmod n$.
Let $S_3$ be the set of $\tilde{s}_k$ for $k \in [n]$ such that $L_{w_0}(w_i) \tilde{s}_k > L_{w_0}(w_i)$.
In particular, $S_1 \sqcup S_2 \sqcup S_3 = \{\tilde{s}_1, \dots, \tilde{s}_n\}$.

Let $I_1 \to I_2 \to \cdots I_n \to I_1$ be the circuit of $w_i$. For each $\tilde{s}_t \in S_1$, let $F_t$ be the facet $\conv(e_{I_k} \mid k \neq t)$ of $\Delta_{(w_i)}$.
Let $\mathcal{F} = \{F_t \mid \tilde{s}_t \in S_1\}$.
In other words, $\mathcal{F} = \{ \Delta_{(w_i)} \cap \Delta_{(u)} \mid L_{w_0}(u) = L_{w_0}(w) \text{ for some } \tilde{s} \in S_1 \}$.

We want to show that $\Delta_{(w_i)} \cap (\Delta_{(w_0)} \cup \cdots \cup \Delta_{(w_{i-1})}) \subseteq \bigcup \mathcal{F}$.
The reverse inclusion $\supseteq$ is immediate.
To show $\subseteq$, we want to show that, for any $j \in [0,i-1]$, $\Delta_{(w_i)} \cap \Delta_{(w_j)} \subseteq \bigcup \mathcal{F}$.

For any $j \in [0,i-1]$, if $w_j \prec w_i$, then by definition, there is a shortest path $w_0 \to \cdots \to w_j \to \cdots \to u \to w_i$. Then $\Delta_{(w_j)} \cap \Delta_{(w_i)} \subset \Delta_{(u)} \cap \Delta_{(w_i)} \in \mathcal{F}$.

For any $j \in [0,i-1]$, if $w_j$ and $w_i$ are not comparable, we consider the minimal set $J$ of simple reflections that generate $L_{w_i}(w_j)$.

Suppose $J \cap S_1 \neq \emptyset$,
we can find a reduced word for $L_{w_j}(w_i)$ of the form $u^{S_1} u_{S_1}$ such that $u_{S_1}$ is generated by $S_1$ \cite[Prop. 2.4.4]{bjornerbrenti}. 
If $u_{S_1} \neq 1$, then let $\tilde{s}_t$ be the last reflection in $u_{S_1}$. We have $\Delta_{(w_i)} \cap \Delta_{(w_j)} \subseteq F_t \subseteq \bigcup \mathcal{F}$ by \cref{lem:facets}.

Suppose $J \cap S_1 = \emptyset$ but $J \cap S_2 \neq \emptyset$, then we can find a reduced word for $L_{w_j}(w_i)$ of the form $u^{S_2}u_{S_2}$ where $u_{S_2}$ is generated by $S_2$. However, $L_{w_0}(w_i)s \notin L_{w_0}(D_\J)$ for $\tilde{s} \in S_2$, so there is no path in $\Gamma_\J$ that corresponds to this reduced word, a contradiction to the second statement of \cref{lem:relabel}.

Suppose $J \subseteq S_3$, that is, $L_{w_i}(w_j)$ can be generated by $S_3$. 
Let $u^{S_3}$ be a reduced word for $L_{w_0}(w_i)$ and let $u_{S_3}$ be a reduced word for $L_{w_i}(w_j)$. We have $u^{S_3} s > u^{S_3}$ for all $s \in S_3$ by the definition of $S_3$.
Moreover, $u^{S_3} u_{S_3}$ is a reduced word for $L_{w_0}(w_j)$ by \cite[Prop. 2.4.4]{bjornerbrenti}.
By \cref{lem:relabel}, there is a shortest path from $w_0$ to $w_j$ passing through $w_i$ so $w_i \prec w_j$, which contradicts our assumption that $j \in [0,i-1]$ and $w_0, \dots, w_s$ is a linear extension of $(\mathcal{P}_{w_0, \J},\prec)$.

Therefore, for any $i \in [2,s]$ and for any $j \in [0,i-1]$, the intersection
between $\Delta_{(w_i)}$ and $\Delta_{(w_j)}$ is a union of facets of $\Delta_{(w_i)}$.
\end{proof}

\begin{rema}
In the spirit of \cite[Thm. 2.1]{bjorner}, \cref{prop:shelling} can be stated in a more general setting, which says that any linear extension of the weak order of the restriction of the Coxeter complex to a convex subset that contains the identity is a shelling. This more general statement will appear in the joint work with Elisabeth Bullock \cite{alcovedhstar}.

If we implement a \emph{breadth first search} on $\Gamma_\J$ from root vertex $w_0$, then the cover of a vertex $w$ in $\mathcal{P}_{w_0,\J}$ is equal to the number of edges that connect $w$ to those previously found vertices.
\end{rema}

\begin{proof}[Proof of \cref{thm:main}]
The statement about the covers follows from \cref{lem:stanley} and \cref{prop:shelling}.
\end{proof}

\begin{exem}
In \cref{fig:hypersimplex25}, we draw $\Gamma_\J$ where $\J = (12,23,34,45,51)$. The positroid polytope associated to $\J$ is the hypersimplex $\Delta_{2,5}$.
Choose $w_0 = 31425$ and consider the poset in \cref{fig:hypersimplex25}.
We have $\cover(31425) = 0, \cover(31245) = \cover(13425) = \cover(23145) = \cover(34125) = \cover(14235) = 1$ and $\cover(12435) = \cover(13245) = \cover(21345) = \cover(23415) = \cover(41235) = 2$,
hence $h^*(\Delta_{2,5}, z) = 1+5z+5z^2$.
\end{exem}

\begin{figure}
    \centering
    \begin{tikzpicture}[every node/.style={draw, circle, inner sep=2pt, scale=.8}]
    \node at (0,0) (2413) {24135};
    \node at (-2,2) (4213) {42135};
    \node at (0,2) (3241) {32415};
    \node at (2,2) (4132) {41325};
    \node at (-2,4) (3421) {34215};
    \draw[thick] (2413) -- (4213);
    \draw[thick] (4213) -- (3421);
    \draw[thick] (2413) -- (3241);
    \draw[thick] (3241) -- (3421);
    \draw[thick] (2413) -- (4132);
    \end{tikzpicture}
    \caption{We show the graph of the circuit triangulation of the positroid polytope associated with the Grassmann necklace $\J = (124, 234, 134, 145, 125)$. 
    The vertices of $\Gamma_\J$ are labeled by $D_\J$.}
    \label{fig:positroid24153}
\end{figure}

\begin{exem}\label{exem:24153}
Consider the connected positroid polytope $P_\J$ for the Grassmann necklace $\J = (124, 234, 134, 145, 125)$. 
We show $\Gamma_\J$ and the Hasse diagram of $\mathcal{P}_{24135,\J}$ in \cref{fig:positroid24153}. 
We have $\cover(24135) = 0, \cover(42135) = \cover(32415) = \cover(41325) = 1$, and $\cover(34215) = 2$.
The $h^*$-polynomial of the positroid polytope associated to the permutation $2413$ is $1+3z+z^2$.
\end{exem}

\section{The $h^*$-polynomial of half-open connected positroid polytopes}\label{sec:half-open}

In this section, we give a combinatorial formula for the $h^*$-polynomial of half-open connected positroid polytopes in terms of descents of permutations.
Then we show how to compute the $h^*$-polynomial of the whole closed polytope by inclusion-exclusion on the half-open ones of smaller dimension.
Our result generalizes results from \cite{NanLi} and \cite{AJR20}.

\begin{defi}
Define the half-open simplex $\nabla_w^\circ$ by 
\begin{align*}
    0 < y_{w_1} & \\
    y_{w_i} \leq y_{w_{i+1}} &\iff w_i < w_{i+1} \\
    y_{w_i} < y_{w_{i+1}} &\iff w_i > w_{i+1} \\
    y_{w_n} \le 1 &
\end{align*}
\end{defi}

\begin{lemm}[\cite{AJR20}, Lem. 4.5]
The half-open simplices are mutually disjoint, and they triangulate the half open hypercube $(0,1]^n$ for $n \geq 1$. That is,
$$(0,1]^n = \bigsqcup_{w \in S_n} \nabla_w^\circ. $$
\end{lemm}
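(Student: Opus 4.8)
The plan is to prove the two assertions—mutual disjointness and covering of $(0,1]^n$—by a clean reduction to Stanley's closed-simplex triangulation of $[0,1]^n$, together with a careful bookkeeping of which boundary faces of each $\nabla_w$ are included in the half-open version $\nabla_w^\circ$. The governing principle is that the half-open simplices should partition the half-open cube exactly when, for each pair of adjacent closed simplices $\nabla_u,\nabla_w$ sharing a facet, exactly one of $\nabla_u^\circ,\nabla_w^\circ$ contains that facet; and similarly each facet of $[0,1]^n$ lying in the hyperplane $y_j = 0$ is excluded from every $\nabla_w^\circ$, while each facet in $y_j = 1$ is retained by exactly one $\nabla_w^\circ$.

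First I would recall Stanley's fact that $[0,1]^n = \bigcup_{w\in S_n}\nabla_w$, where $\nabla_w = \{0\le y_{w_1}\le\cdots\le y_{w_n}\le 1\}$, and that two simplices $\nabla_u$ and $\nabla_w$ have interiors meeting only in measure zero. A point $\y\in[0,1]^n$ lies in $\nabla_w$ iff $w$ sorts the coordinates of $\y$ into weakly increasing order (with the convention $0$ prepended, $1$ appended); the point is interior to $\nabla_w$ iff all these inequalities $0<y_{w_1}<\cdots<y_{w_n}<1$ are strict, and it lies on a lower-dimensional face of $\nabla_w$ precisely when some equality holds among $0,y_{w_1},\ldots,y_{w_n},1$. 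The key combinatorial observation is the tie-breaking rule built into the definition of $\nabla_w^\circ$: when $y_{w_i} = y_{w_{i+1}}$ is allowed to be an equality, the condition forces $w_i < w_{i+1}$, so among all $w$ that sort a given $\y$, exactly one of them is consistent with the weak/strict pattern prescribed by $\nabla_w^\circ$—namely, for each block of equal coordinate values, the unique ordering that lists their indices in increasing order, and for the bottom we require $0 < y_{w_1}$ strictly so that any $\y$ with some coordinate equal to $0$ is excluded altogether, while at the top $y_{w_n}\le 1$ is allowed.

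Concretely, I would argue: fix $\y\in(0,1]^n$ and let $0 = v_0 < v_1 < \cdots < v_r \le 1$ be the distinct values taken by $0,y_1,\ldots,y_n$ (note $v_0 = 0$ is attained only by the prepended $0$, never by a coordinate, since $\y\in(0,1]^n$). Group $[n]$ into blocks $B_1,\ldots,B_r$ where $B_t = \{j : y_j = v_t\}$. There is a unique $w\in S_n$ such that the one-line word $w_1\cdots w_n$ lists first the elements of $B_1$ in increasing order, then $B_2$ in increasing order, and so on; this $w$ satisfies $0 < y_{w_1}$ (as $v_1 > 0$), satisfies $y_{w_i} = y_{w_{i+1}} \Rightarrow w_i < w_{i+1}$ (same block, sorted increasingly) and $y_{w_i} < y_{w_{i+1}} \Rightarrow$ (crossing a block boundary, automatically compatible), and satisfies $y_{w_n}\le 1$. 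Hence $\y\in\nabla_w^\circ$. Conversely, if $\y\in\nabla_{w'}^\circ$ for some $w'$, then $w'$ must list the coordinates in weakly increasing value order, and within each block of equal values the defining biconditionals force increasing index order, and $y_{w'_1}>0$ forces no coordinate equals $0$; so $w' = w$. This simultaneously gives existence (so the union is all of $(0,1]^n$) and uniqueness (so the union is disjoint), completing the proof.

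The main obstacle, and the step that needs the most care, is verifying that the biconditional conditions in the definition of $\nabla_w^\circ$ exactly pin down the tie-breaking: one must check that for a point on a shared facet of two closed simplices $\nabla_u,\nabla_w$—equivalently, a point with exactly one coincidence $y_{u_i} = y_{u_{i+1}}$—the facet is assigned to $\nabla_w^\circ$ where $w$ has $w_i < w_{i+1}$ at that position and excluded from $\nabla_u^\circ$ where $u_i > u_{i+1}$, with no possibility of it being claimed by a third simplex. This is exactly the content of the block-sorting bijection above, so once that lemma is stated cleanly the rest is routine; I would also remark that measure-additivity (each $\nabla_w$ has volume $1/n!$ and $|S_n| = n!$) gives an independent sanity check that no overcounting or undercounting has occurred.
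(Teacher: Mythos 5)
Your proof is correct. The paper does not prove this lemma at all --- it is quoted verbatim from \cite{AJR20} (Lem.\ 4.5) --- so there is no internal argument to compare against; your block-sorting/tie-breaking argument is the standard and complete one. One small presentational point: the ``$\iff$'' in the definition of $\nabla_w^\circ$ should not be read as a literal biconditional constraint on the point (that would force equalities at ascents); you correctly adopt the intended reading, namely that the relation between $y_{w_i}$ and $y_{w_{i+1}}$ is ``$\le$'' at an ascent and ``$<$'' at a descent, which is confirmed by the paper's example $0 < y_3 < y_2 \le y_4 < y_1 \le 1$ for $w=3241$. With that reading, your existence argument (sort by value, break ties by increasing index, use $0<y_{w_1}$ to handle the excluded $y_j=0$ faces) and your uniqueness argument (any valid $w'$ must sort by value and must be increasing within each block of equal coordinates) together establish $(0,1]^n=\bigsqcup_{w\in S_n}\nabla_w^\circ$ exactly as claimed.
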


\begin{lemm}[\cite{multihyper}]\label{lem:HJV}
The $h^*$-polynomial of the half-open simplex $\nabla_w^\circ$ is $h^*(\nabla_w^\circ, z) = z^{\des(w)+1}$.
\end{lemm}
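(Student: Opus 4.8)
The plan is to compute the Ehrhart series of the half-open simplex $\nabla_w^\circ$ directly and read off the $h^*$-polynomial. First I would normalize by the permutation: applying the linear coordinate change $(y_1,\dots,y_n)\mapsto (y_{w_1},\dots,y_{w_n})$ is unimodular and carries $\nabla_w^\circ$ to the half-open order simplex determined by the chain of inequalities $0 < z_1 \,\square_1\, z_2 \,\square_2\, \cdots \,\square_{n-1}\, z_n \le 1$, where $\square_i$ is ``$\le$'' if $w_i < w_{i+1}$ and ``$<$'' if $w_i > w_{i+1}$. So it suffices to count lattice points in the $t$-th dilate of this half-open order simplex, i.e.\ integer solutions to $0 < a_1 \,\square_1\, a_2 \,\square_2\, \cdots \,\square_{n-1}\, a_n \le t$. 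The positions where $\square_i$ is a strict inequality are exactly the descents of $w$, so there are $\des(w)$ strict interior steps, plus the one strict step $0 < a_1$ at the bottom.

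Next I would perform the standard ``unfolding'' substitution that turns a weakly increasing chain with prescribed strict steps into a strictly increasing one: for each index $i$ where $\square_{i}$ is strict, shift all subsequent coordinates down by one (and similarly absorb the initial $0 < a_1$). Concretely, set $b_j := a_j - \#\{\,i < j : \square_i \text{ strict}\,\} - 1$ for the appropriate bookkeeping; this is a bijection between the solution set above and the set of integer tuples $0 \le b_1 \le b_2 \le \cdots \le b_n \le t - \des(w) - 1$. The number of such tuples is $\binom{t - \des(w) - 1 + n}{n}$, so
\[
E(\nabla_w^\circ, t) \;=\; \binom{\,t - \des(w) - 1 + n\,}{n}.
\]
Then I would sum the geometric-type series: using the identity $\sum_{t \ge 0} \binom{t - c + n}{n} z^t = \dfrac{z^{c}}{(1-z)^{n+1}}$ with $c = \des(w) + 1$ (valid since for $t < c$ the binomial coefficient vanishes, as $\des(w) \le n-1$ guarantees $c \le n$ and the generating function identity holds), we get
\[
\sum_{t \ge 0} E(\nabla_w^\circ, t)\, z^t \;=\; \frac{z^{\des(w)+1}}{(1-z)^{n+1}},
\]
and since $\nabla_w^\circ$ is $n$-dimensional, comparing with $\dfrac{h^*(\nabla_w^\circ,z)}{(1-z)^{n+1}}$ yields $h^*(\nabla_w^\circ,z) = z^{\des(w)+1}$.

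The main obstacle is getting the unfolding bijection and its bookkeeping exactly right, in particular tracking the initial strict step $0 < a_1$ correctly and confirming that the total shift is precisely $\des(w)+1$ rather than off by one; an easy way to double-check is to test the extreme cases $w = \mathrm{id}$ (no descents, giving $h^* = z$, consistent with a half-open simplex closed on one facet) and $w = w_0$ the longest element ($\des = n-1$, giving $h^* = z^n$, consistent with the fully open simplex). Alternatively, if one prefers to avoid the explicit substitution, one can cite that this identity is precisely the content of \cite{multihyper} for the half-open Stanley simplices, but spelling out the short Ehrhart computation as above is self-contained and transparent.
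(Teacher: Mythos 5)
Your computation is correct, and it is worth noting that the paper does not actually prove this lemma at all: it is simply cited from \cite{multihyper}, with a remark that the conventions there differ slightly (that reference uses half-open simplices of the form $0 \le y_{w_1} \cdots < 1$, closed at the bottom and open at the top, whereas the paper's $\nabla_w^\circ$ is open at the bottom and closed at the top). Your direct Ehrhart count — unimodularly permuting coordinates, unfolding the chain $0 < a_1 \,\square_1\, \cdots \,\square_{n-1}\, a_n \le t$ by subtracting the number of preceding strict steps, obtaining $E(\nabla_w^\circ,t)=\binom{t-\des(w)-1+n}{n}$, and summing the series — is a valid self-contained argument, and it has the small advantage of establishing the statement directly in the paper's convention rather than translating from the cited one; the bookkeeping (initial strict step contributing the extra $+1$, vanishing of the binomial for $t \le \des(w)$) checks out. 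The only blemishes are in your informal sanity checks: for $w=\mathrm{id}$ the simplex is open on exactly one facet (not ``closed on one facet''), and for $w$ the longest element it is not the fully open simplex, since the facet $y_{w_n}=1$ is retained (the fully open simplex would have $h^*=z^{n+1}$ by reciprocity); neither remark affects the proof, but you should fix the wording if you keep those asides.
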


\begin{exem}
    Let $w = 3241$, then $\nabla_w^\circ$ is defined by
    \[
    0 < y_3 < y_2 \leq y_4 < y_1 \le 1.
    \]
\end{exem}

\begin{rema}
In \cite{multihyper}, the half-open hypersimplex they consider for 3241 is defined by
\[
0 \le y_3 < y_2 \leq y_4 < y_1 < 1.
\]
In \cite{AJR20}, these half-open hypersimplices are mutually disjoint and triangulate $[0,1)^n$.
We modify these conventions because \cite{parisi2024magic} and \cite{alcove1} both identify a permutation $w \in S_{n-1}$ with $w_1\dots w_{n-1} n \in S_n$ while \cite{AJR20} identifies $w \in S_{n-1}$ with $0 w_1\dots w_{n-1} \in S_n$.
\end{rema}

\begin{defi}
Let $w \in D_n$. We denote $\phi(\nabla_{\underline{w}}) \subset [0,1)^{n-1}$ by $\Delta_{(w)}^\circ$ and call it the \emph{half-open $(w)$-simplex}.
\end{defi}

\begin{rema}
The facets of these $(w)$-simplices and positroid polytopes are all of the form $x_{[i,j]} = k$ for some $i, j \in [n]$ and $k \in \Z$.
We will call a facet of a positroid polytope or $(w)$-simplex \emph{upper} if it is of the form $x_{[i,j]} = k$ such that the positroid polytope satisfies $x_{[i,j]} \leq k$.
\end{rema}

The next corollary follows directly from the definition of $\phi$.
\begin{coro}\label{coro:half-open-w-simplex}
Let $w \in D_n$. The half-open $(w)$-simplex is equal to $\Delta_{(w)}$ with all upper facets removed.
\end{coro}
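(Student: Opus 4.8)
The plan is to chase the definitions through the measure-preserving map $\phi$ and match the four families of inequalities defining $\nabla_w^\circ$ with the facet-defining inequalities of $p(\Delta_{(w)})$ recorded in \cref{lem:wsimplex}. Recall that $\Delta_{(w)}^\circ := \phi(\nabla_{\underline w})$, where $\nabla_{\underline w}$ is the half-open simplex $\nabla_w^\circ$ associated to the one-line word $\underline w = w_1\cdots w_{n-1}$ (with $w \in D_n$, so $w_n = n$). The closed simplex $\nabla_{\underline w}$ is cut out by the weak inequalities $0 \le y_{w_1} \le \cdots \le y_{w_{n-1}} \le 1$; passing to $\nabla_w^\circ$ makes strict exactly those inequalities $y_{w_i} \le y_{w_{i+1}}$ for which $w_i > w_{i+1}$ (an ascent in the reversed reading, i.e. where $y$ strictly increases), together with the boundary inequality $0 < y_{w_1}$, while the inequalities $y_{w_i} \le y_{w_{i+1}}$ with $w_i < w_{i+1}$ and the top one $y_{w_{n-1}} \le 1$ stay weak.

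First I would identify, for each defining inequality of $\nabla_w^\circ$, the corresponding facet inequality of $p(\Delta_{(w)})$ under $\phi$. By \cite[Theorem 2.7]{alcove1} (used already in the proof of \cref{lem:wsimplex}), $\phi$ carries $\nabla_{\underline w}$ to $p(\Delta_{(w)})$ and carries the facet hyperplanes listed in the first display of \cref{lem:wsimplex} to the facet hyperplanes of $\nabla_{\underline w}$. Concretely, the inequality $y_{w_i} \le y_{w_{i+1}}$ with $w_i < w_{i+1}$ corresponds, via the explicit inverse $y_i = 1 + \floor{x_i+\cdots+x_n} - (x_i+\cdots+x_n)$, to a \emph{lower} facet $x_{[w_i,w_{i+1}]} \ge \cdes_L(w|_{[w_i,w_{i+1}]}) - 1$, whereas the inequality $y_{w_i} \le y_{w_{i+1}}$ with $w_i > w_{i+1}$ corresponds to an \emph{upper} facet $x_{[w_{i+1},w_i]} \le \cdes_L(w|_{[w_{i+1},w_i]})$; similarly $0 < y_{w_1}$ matches one boundary facet and $y_{w_{n-1}} \le 1$ the other. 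I would check, using the monotonicity statements built into the $x$–$y$ correspondence, that $\phi$ sends a weak inequality to a weak (closed) inequality and a strict inequality to a strict (open) one. Then the upper facets of $\Delta_{(w)}$ — those of the form $x_{[i,j]} = k$ with $x_{[i,j]} \le k$ on the polytope — are precisely the images of the strict inequalities of $\nabla_w^\circ$, so removing them from $\Delta_{(w)}$ yields exactly $\Delta_{(w)}^\circ$.

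The main obstacle I anticipate is bookkeeping the sign/orientation of each facet under $\phi$: one must verify that ``$y$ strictly increasing across step $i$'' translates to an \emph{upper} (rather than lower) facet inequality in the $x$-coordinates, and that the two boundary conditions $0 < y_{w_1}$ and $y_{w_{n-1}} \le 1$ end up on the correct (upper vs.\ lower) side. This is where the precise form of $\phi$ and its inverse, together with the floor function appearing in $y_i = 1 + \floor{x_i+\cdots+x_n} - (x_i+\cdots+x_n)$, must be used carefully; once that correspondence of sides is pinned down, the corollary follows immediately and the remark preceding it explains why it is ``direct from the definition of $\phi$.''
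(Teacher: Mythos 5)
Your proposal is correct and follows exactly the route the paper intends: the paper offers no written proof beyond the remark that the corollary ``follows directly from the definition of $\phi$,'' and your inequality-by-inequality matching of the strict/weak conditions defining $\nabla_w^\circ$ with the upper/lower facets listed in \cref{lem:wsimplex} is precisely the definition-chase being alluded to. The sign bookkeeping you flag does come out as you predict (via $y_i = 1 + \floor{x_i+\cdots+x_n} - (x_i+\cdots+x_n)$, an ascent $w_i<w_{i+1}$ yields a closed lower facet and a descent yields a removed upper facet, with $0<y_{w_1}$ giving the remaining removed upper facet), so your argument is complete once those routine checks are written out.
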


\begin{thm}\label{thm:half-open}
Let $P_\J$ be a connected positroid polytope, where $\J$ is the associated Grassmann necklace.
Consider the half-open positroid polytope $\tilde{P}_\J \subset [0,1)^{n-1}$ which is the projection of $P_\J$ onto the first $(n-1)$ coordinates with all upper facets removed.
Then the $h^*$-polynomial of $\tilde{P}_\J$ is equal to
$h^*(\tilde{P}_\J,z) = \sum_{w \in D_\J} z^{\des(\underline{w})+1}$.
\end{thm}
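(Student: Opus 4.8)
The plan is to exploit the circuit triangulation of $P_\J$ by $(w)$-simplices for $w \in D_\J$ (\cref{thm:trian}) together with the fact that the Ehrhart $h^*$-polynomial is additive over a decomposition into half-open pieces. First I would observe that the half-open positroid polytope $\tilde P_\J$ decomposes as a disjoint union of half-open cells indexed by $D_\J$: one takes the $(w)$-simplices $\Delta_{(w)}$ for $w \in D_\J$ and, to each, removes exactly those facets that are either internal facets shared with an earlier cell in a chosen order, or upper facets of $P_\J$. The key point is that after projecting to the first $n-1$ coordinates, the map $\phi$ sends Stanley's half-open simplices $\nabla_{\underline w}^\circ$ to the half-open $(w)$-simplices $\Delta_{(w)}^\circ$, and by \cref{coro:half-open-w-simplex} the latter is exactly $\Delta_{(w)}$ with all upper facets removed. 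So I would assemble $\tilde P_\J$ as the disjoint union $\bigsqcup_{w \in D_\J}\Delta_{(w)}^\circ$, verifying that removing all upper facets of each cell and the appropriate shared internal facets glues up, without overlap or omission, to precisely $\tilde P_\J = \phi(p(P_\J))$ with its upper facets removed.

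The cleanest way to make this precise is to go back one dimension: $P_\J$ projected and then pulled back under $\phi^{-1}$ corresponds to a half-open region inside $[0,1)^{n-1}$, and lifting everything into the hypercube $[0,1)^n$ (identifying $w \in S_{n-1}$ with the appropriate element of $S_n$), the region $\tilde P_\J$ corresponds to the disjoint union $\bigsqcup_{w} \nabla_{\underline w}^\circ$ over those $w$ whose associated cycle lies in $D_\J$. Concretely, $\phi^{-1}(\tilde P_\J)$ is a union of closed simplices $\nabla_{\underline w}$; removing the upper facets of $\tilde P_\J$ translates, under $\phi^{-1}$, into removing certain facets of each $\nabla_{\underline w}$, and the combinatorics of which facets get removed is governed by the inequalities in \cref{lem:wsimplex} and \cref{prop:posi}. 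Tracking this carefully, each closed simplex $\nabla_{\underline w}$ contributes its half-open version $\nabla_{\underline w}^\circ$, so that $\phi^{-1}(\tilde P_\J) = \bigsqcup_{w \in D_\J}\nabla_{\underline w}^\circ$ as a disjoint union.

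Once that disjoint-union identity is established, the conclusion is immediate: since $\phi$ is measure-preserving and lattice-point-preserving (its inverse is given by the explicit floor formula, so it restricts to a bijection on $\tfrac1t\Z^n$ for every $t$), Ehrhart series are additive over this disjoint union, giving
$$
h^*(\tilde P_\J, z) = \sum_{w \in D_\J} h^*(\nabla_{\underline w}^\circ, z) = \sum_{w \in D_\J} z^{\des(\underline w)+1},
$$
where the last equality is \cref{lem:HJV}. I would also note the consistency check with \cref{thm:main}: summing $z^{\des(\underline w)+1}$ over all of $D_\J$ versus summing $z^{\cover(w)}$ should agree after the inclusion-exclusion relating the closed polytope to its half-open faces (\cref{prop:from_half-open}), which is a sanity check rather than part of the proof.

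The main obstacle I anticipate is the bookkeeping in the middle step: verifying that the half-open structure on $\tilde P_\J$ (remove all upper facets, keep all lower facets) matches, cell by cell, the half-open structure $\nabla_{\underline w}^\circ$ (the pattern of strict versus weak inequalities dictated by the descents of $\underline w$). One must check that a facet of $\Delta_{(w)}$ is an upper facet of $\Delta_{(w)}$ and also a facet of $P_\J$ exactly when the corresponding facet of $\nabla_{\underline w}$ is one that $\nabla_{\underline w}^\circ$ omits, and that internal facets shared between two cells of $D_\J$ are consistently assigned (open on one side, closed on the other) — this is exactly the "half-open decomposition" compatibility, and it requires matching the upper/lower dichotomy for positroid facets under the explicit description of $\phi$ on facets from \cref{lem:wsimplex}. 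Everything else is formal.
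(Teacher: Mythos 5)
Your proposal is correct and follows essentially the same route as the paper: decompose $\tilde{P}_\J$ as the disjoint union of the half-open simplices $\Delta_{(w)}^\circ$ for $w \in D_\J$ (via \cref{coro:half-open-w-simplex} and the map $\phi$ from Stanley's half-open simplices), then apply additivity of $h^*$-polynomials together with \cref{lem:HJV}. The facet-bookkeeping you flag as the main obstacle is exactly what \cref{coro:half-open-w-simplex} packages, so the paper's proof is just the two-line version of your argument.
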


\begin{proof}
By \cref{coro:half-open-w-simplex}, we have that the half open polytope $\tilde{P}_\J$ is triangulated by $\Delta_{(w)}^\circ$.
By \cref{lem:HJV} and the additivity of $h^*$-polynomials, we have that $h^*(\tilde{P}_\J,z) = \sum_{w \in D_\J} z^{\des(\underline{w})+1}$.
\end{proof}

\begin{exem}
Consider the Grassmann necklace $\J = (124, 234, 134, 145, 125)$.
The half-open positroid polytope $\tilde{P}_\J$ associated to $\J$ has facet-defining inequalities
$x_1 < 1, x_2 < 1, x_4 < 1, x_3 \geq 0$ and
\begin{align*}
    x_1+x_2+x_3 &< 2 \\
    x_1+x_2 &\ge 1 \\
    x_1+x_2+x_3+x_4 &\ge 2
\end{align*}
\end{exem}

To compute the $h^*$-polynomial of a polytope from the $h^*$-polynomials of the half-open polytope and its faces, we use the inclusion-exclusion principle.
Let $P$ be a polytope and let $F_1, \dots, F_\ell$ be a collection of facets of $P$.
Consider the restriction of the face poset of $P$ to have coatoms $F_1, \dots, F_\ell$.
This poset $\mathcal{P}_{F_1, \dots, F_\ell}$ describes all the faces of $P$ in the intersections of $F_1, \dots, F_\ell$.
Let $\mu_{F_1, \dots, F_\ell}$ be the M\"obius function of this poset.

The next proposition follows from inclusion-exclusion on the face poset and additivity of Ehrhart polynomials.
\begin{prop}\label{prop:from_half-open}
Let $P$ be a polytope. 
Let $F_1, \dots, F_\ell$ be a collection of facets of $P$, and 
let $\tilde{P} = P \setminus (F_1 \cup \cdots \cup F_\ell)$.
The $h^*$-polynomial of the polytope $P$ is equal to
$$h^*(P,z) = h^*(\tilde{P},z) - \sum_{F \in \mathcal{P}_{F_1, \dots, F_\ell}(F), F \neq P} \mu_{F_1, \dots, F_\ell}(F,P) (1-z)^{\dim(P) - \dim(F)} h^*(F,z).$$
\end{prop}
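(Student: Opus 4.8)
The plan is to combine two ingredients: the additivity of the Ehrhart polynomial (equivalently, of the numerator $h^*(\cdot,z)$ after clearing the common denominator) over a polyhedral subdivision, and the inclusion–exclusion relation expressing the indicator function of $P$ in terms of the indicator of the half-open piece $\tilde P$ and the indicators of faces in the intersection lattice of $F_1,\dots,F_\ell$. First I would fix the Ehrhart-level statement. For a lattice polytope $Q$ of dimension $d_Q$ write $E(Q,t)(1-z)^{d+1}$ as a formal object after choosing the ambient $d = \dim(P)$; the point is that $\sum_t E(Q,t)z^t = h^*(Q,z)/(1-z)^{d_Q+1}$, so multiplying through by $(1-z)^{d+1}$ gives $(1-z)^{d-d_Q}h^*(Q,z)$. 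Thus an identity among Ehrhart \emph{functions} that is linear with integer coefficients translates, after multiplying the Ehrhart series by $(1-z)^{d+1}$, into the same linear identity among the polynomials $(1-z)^{\dim P - \dim F}h^*(F,z)$.

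Next I would write down the set-theoretic identity. Since $\tilde P = P \setminus (F_1\cup\cdots\cup F_\ell)$, we have, as functions on $\Z^n$ (or on any dilate),
\[
\mathbf 1_{\tilde P} = \mathbf 1_P - \mathbf 1_{F_1\cup\cdots\cup F_\ell},
\]
and by inclusion–exclusion over the intersection poset $\mathcal P_{F_1,\dots,F_\ell}$ of the chosen facets,
\[
\mathbf 1_{F_1\cup\cdots\cup F_\ell} = -\sum_{\substack{F\in\mathcal P_{F_1,\dots,F_\ell}\\ F\neq P}} \mu_{F_1,\dots,F_\ell}(F,P)\,\mathbf 1_F,
\]
where the sign comes from the standard Möbius-function formulation of inclusion–exclusion on the (meet-)semilattice of facet intersections with top element $P$ (one checks $\sum_{F\le G\le P}\mu(G,P)=[F=P]$, which reduces the union to a signed sum of closed faces). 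Evaluating both sides on $t\cdot(\,\cdot\,)$ and summing the resulting Ehrhart series, then multiplying by $(1-z)^{d+1}$ with $d=\dim P$, gives exactly
\[
h^*(\tilde P,z) = h^*(P,z) + \sum_{\substack{F\in\mathcal P_{F_1,\dots,F_\ell}\\ F\neq P}} \mu_{F_1,\dots,F_\ell}(F,P)\,(1-z)^{\dim(P)-\dim(F)}\,h^*(F,z),
\]
which rearranges to the claimed formula. The one genuinely delicate point—the main obstacle—is bookkeeping the boundary carefully: $\tilde P$ is not closed, so "its Ehrhart $h^*$-polynomial" must be interpreted via the half-open Ehrhart theory mentioned before \cref{prop:from_half-open}, and one must make sure the inclusion–exclusion is applied at the level of indicator functions of the relatively closed faces $F$ (not their relative interiors), so that each $\mathbf 1_F$ is exactly the Ehrhart counting function of the lattice polytope $F$. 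Provided one sets this up with the closed faces and the Möbius function of the intersection poset with maximum $P$, everything else is the routine linearity argument above, so I would spend most of the write-up being precise about which faces appear and with what multiplicity, and only a sentence each on additivity of $E(\cdot,t)$ and on passing from Ehrhart series to $h^*$-polynomials.
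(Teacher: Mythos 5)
Your argument is correct and takes essentially the same route as the paper, whose entire justification is the sentence preceding the proposition: inclusion--exclusion (via the M\"obius function of the intersection poset $\mathcal{P}_{F_1,\dots,F_\ell}$ with maximum $P$) applied to the indicator functions of the closed faces, plus additivity of Ehrhart counting and the observation that clearing the common denominator $(1-z)^{\dim P+1}$ turns $h^*(F,z)/(1-z)^{\dim F+1}$ into $(1-z)^{\dim P-\dim F}h^*(F,z)$. The one loose end you should state explicitly is that the lattice-point identity is verified for dilates $t\ge 1$ and then extended to the full Ehrhart series by polynomiality (this is exactly the half-open convention at $t=0$ that you already flag as the delicate bookkeeping point), after which the rearrangement gives the claimed formula.
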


\begin{figure}
    \centering
    \begin{tikzpicture}
    \coordinate (A) at (0,0);
    \coordinate (B) at (4,0);
    \coordinate (C) at (4,2);
    \coordinate (D) at (0,2);

    \coordinate (Apex) at (2,5);

    \draw[thick] (A) -- (B) -- (C);
    \draw[thick] (C) -- (D) --(A);

    \draw[thick] (Apex) -- (A);
    \draw[thick] (Apex) -- (B);
    \draw[thick] (Apex) -- (C);
    \draw[thick] (Apex) -- (D);

    \node[below left] at (A) {0110};
    \node[below right] at (B) {1010};
    \node[above right] at (C) {1001};
    \node[above left] at (D) {0101};

    \draw[fill=red!80,opacity=0.5] (Apex) -- (B) -- (C) -- cycle;
    \draw[fill=yellow!80,opacity=0.5] (Apex) -- (A) -- (D) -- cycle;
    \draw[fill=blue!80,opacity=0.5] (Apex) -- (A) -- (B) -- cycle;

    \node[above] at (Apex) {1100};
    \end{tikzpicture}
    \caption{The positroid polytope associated to the Grassmann necklace $\J = (12,23,13,14)$ is a pyramid. The red facet corresponds to $F_1: x_1 = 1$; the yellow facet corresponds to $F_2: x_2 = 1$; the blue facet corresponds to $F_3: x_1+x_2+x_3 = 2$. These are all the upper facets of this positroid polytope.}
    \label{fig:2413}
\end{figure}
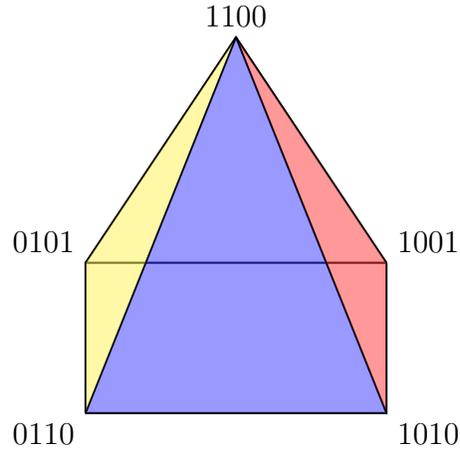

\begin{figure}
\centering
    \begin{tikzpicture}
    \node (P) at (2,7) {\pyramid};
    \node (F1) at (0,5)  {$\triangle$};
    \node (F3) at (2,5) {$\triangle$};
    \node (F2) at (4,5) {$\triangle$};
    \draw[thick] (P) -- (F1);
    \draw[thick] (P) -- (F2);
    \draw[thick] (P) -- (F3);
    \node (E1) at (1,3) {/};
    \node (E2) at (3,3) {$\backslash$};
    \draw[thick] (F1) -- (E1) -- (F3);
    \draw[thick] (F3) -- (E2) -- (F2);
    \node (Apex) at (2,1) {1100};
    \draw[thick] (E1) -- (Apex) -- (E2);
    \end{tikzpicture}
    \hspace{3cm}
    \begin{tikzpicture}
    \node (P) at (2,7) {1};
    \node (F1) at (0,5)  {-1};
    \node (F3) at (2,5) {-1};
    \node (F2) at (4,5) {-1};
    \draw[thick] (P) -- (F1);
    \draw[thick] (P) -- (F2);
    \draw[thick] (P) -- (F3);
    \node (E1) at (1,3) {1};
    \node (E2) at (3,3) {1};
    \draw[thick] (F1) -- (E1) -- (F3);
    \draw[thick] (F3) -- (E2) -- (F2);
    \node (Apex) at (2,1) {0};
    \draw[thick] (E1) -- (Apex) -- (E2);
    \end{tikzpicture}
    \caption{The poset $\mathcal{P}_{F_1,F_2,F_3}$ and the value of its M\"obius function $\mu_{F_1,F_2,F_3}(-,P_\J)$.}
    \label{fig:2413poset}
\end{figure}

\begin{exem}\label{exem:2413}
Consider the Grassmann necklace $\J = (12,23,13,14)$.
Then $D_\J$ consists of permutations in $S_4$ that end with 4 such that $\cdes_L(w|_{[3,4]}) \leq 1$ so $D_\J = \{1324, 2134\}$.
Now $\des(132)=\des(213) = 1$, so $h^*(\tilde{P}_\J,z) = 2z^2$.
To compute the $h^*$-polynomial of the whole positroid polytope $P$, which is a pyramid (see \cref{fig:2413}), we use \cref{prop:from_half-open}.
The upper facets we removed are
\begin{align*}
    F_1: x_1 &= 1 \\
    F_2: x_2 &= 1 \\
    F_3: x_1+x_2+x_3 &= 2.
\end{align*}
They are all triangles and $F_1 \cap F_2 = F_1 \cap F_2 \cap F_3$ is the apex of the pyramid. Also $F_1 \cap F_3$ and $F_2 \cap F_3$ are both segments.
Both triangles and segments have $h^*$-polynomial 1.
Therefore, by inclusion-exclusion, the $h^*$-polynomial of $P_\J$ is
$h^*(P_\J,z) = 2z^2 + 3(1-z) - 2(1-z)^2 = 1+z$.
\end{exem}

\begin{figure}
    \centering
    \tdplotsetmaincoords{60}{120}
    \begin{tikzpicture}[tdplot_main_coords, scale=3]

    \coordinate (A1) at (0, 0, 0);    
    \coordinate (B1) at (1.5, 0, 0);    
    \coordinate (C1) at (1.5, 1, 0);    
    \coordinate (D1) at (0, 1, 0);    
    
    \coordinate (A2) at (0, 0.5, 1);  
    \coordinate (B2) at (1.5, 0.5, 1);  

    \draw[thick,dotted] (A1) -- (B1);
    \draw[thick,dotted] (A1) -- (A2);
    \draw[thick,dotted] (A1) -- (D1);

    \draw[thick] (B1) -- (C1) -- (B2) -- cycle;  
    \draw[thick] (D1) -- (A2) -- (B2) -- (C1) -- cycle;  

    \node at (A1) [below] {$01110$};
    \node at (B1) [left] {$01101$};
    \node at (C1) [below right] {$10101$};
    \node at (D1) [right] {$10110$};
    
    \node at (A2) [above] {$11010$};
    \node at (B2) [left] {$11001$};

    \end{tikzpicture}
    \caption{The facet defined by $x_1+x_2+x_3=2$ of the positroid polytope associated to the Grassmann necklace $(124, 234, 134, 145, 125)$. This is a prism, the product of a triangle and a segment.}
    \label{fig:product}
\end{figure}
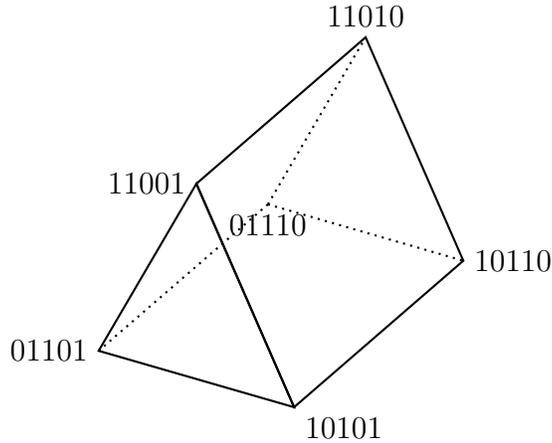

\begin{exem}
Consider the positroid with bases \\$\{\{1, 2, 4\},
 \{1, 2, 5\},
 \{1, 3, 4\},
 \{1, 3, 5\},
 \{1, 4, 5\},
 \{2, 3, 4\},
 \{2, 3, 5\},
 \{2, 4, 5\}\}$, associated to the Grassmann necklace $\J = (124, 234, 134, 145, 125)$.
We have $D_\J = \{34215, 42135, 24135,32415, 41325\}$.
Therefore $h^*(\tilde{P}_\J,z) = z^2+4z^3$.
To compute the $h^*$-polynomial of the whole positroid polytope $P_\J$, we use \cref{prop:from_half-open}.
The upper facets we removed are
\begin{align*}
    F_1 : x_1 &= 1 \\
    F_2 : x_2 &= 1 \\
    F_3: x_4 &= 1 \\
    F_4: x_1+x_2+x_3 &= 2
\end{align*}
The first three facets are isomorphic to the pyramid. The last facet is a prism, which is a product of a triangle and a segment; see \cref{fig:product}. The Ehrhart polynomial of the triangle is equal to $\binom{t+2}{2}$ and the Ehrhart polynomial of the segment is equal to $1+t$. 
Thus, the Ehrhart polynomial of the prism is equal to $\binom{t+2}{2}(1+t)$ by \cref{rema:reduce_to_connected} and the $h^*$-polynomial of the prism is equal to $1+2z$.

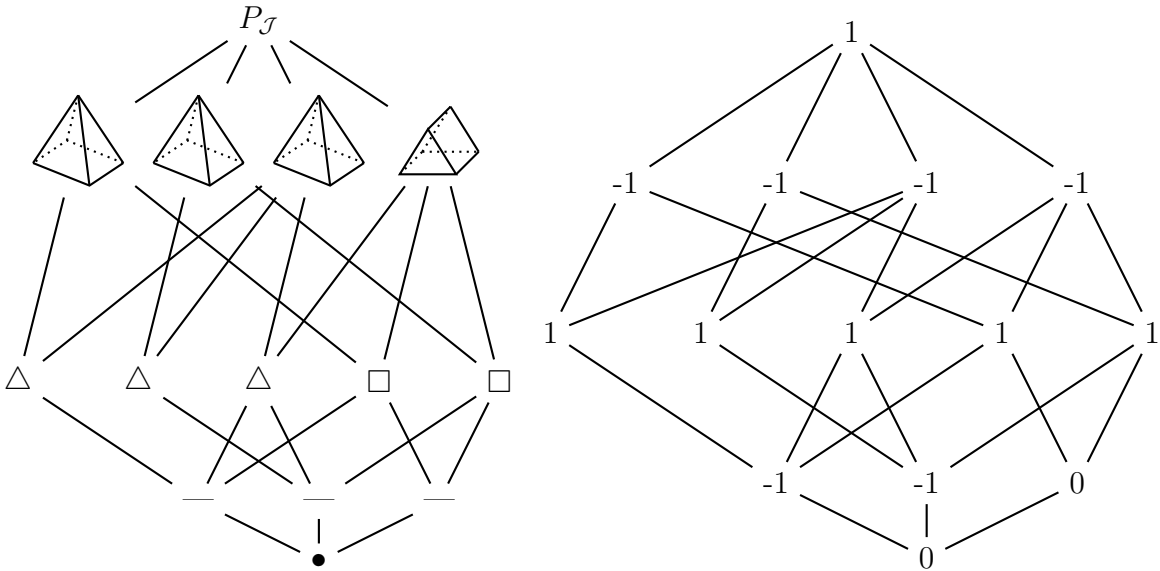
\begin{figure}
    \centering
\begin{tikzpicture}[scale=.8]
\node (P) at (0,6) {$P_\J$};
\node (F1) at (-3,4) {\pyramid};
\node (F2) at (-1,4) {\pyramid};
\node (F3) at (1,4) {\pyramid};
\node (F4) at (3,4) {\prism};

\foreach \i in {1,2,3,4}
{\draw[thick] (P) -- (F\i);}

\node (F5) at (-4,0) {$\triangle$};
\node (F6) at (-2,0) {$\triangle$};
\node (F7) at (0,0) {$\triangle$};
\node (F8) at (2,0) {$\square$};
\node (F9) at (4,0) {$\square$};

\draw[thick] (F1) -- (F5) -- (F3);
\draw[thick] (F2) -- (F6) -- (F3);
\draw[thick] (F3) -- (F7) -- (F4);

\draw[thick] (F1) -- (F8) -- (F4);
\draw[thick] (F2) -- (F9) -- (F4);

\node (P1) at (3,-2) {|};
\draw[thick] (F8) -- (P1) -- (F9);
\node (P2) at (-1,-2) {|};
\draw[thick] (F5) -- (P2) -- (F8);
\draw[thick] (F7) -- (P2);
\node (P3) at (1,-2) {|};
\draw[thick] (F6) -- (P3) -- (F7);
\draw[thick] (F9) -- (P3);

\node (A) at (1,-3) {$\bullet$};
\draw[thick] (P1) -- (A);
\draw[thick] (P2) -- (A);
\draw[thick] (P3) -- (A);
\end{tikzpicture}
\begin{tikzpicture}
\node (P) at (0,6) {1};
\node (F1) at (-3,4) {-1};
\node (F2) at (-1,4) {-1};
\node (F3) at (1,4) {-1};
\node (F4) at (3,4) {-1};

\foreach \i in {1,2,3,4}
{\draw[thick] (P) -- (F\i);}

\node (F5) at (-4,2) {1};
\node (F6) at (-2,2) {1};
\node (F7) at (0,2) {1};
\node (F8) at (2,2) {1};
\node (F9) at (4,2) {1};

\draw[thick] (F1) -- (F5) -- (F3);
\draw[thick] (F2) -- (F6) -- (F3);
\draw[thick] (F3) -- (F7) -- (F4);

\draw[thick] (F1) -- (F8) -- (F4);
\draw[thick] (F2) -- (F9) -- (F4);

\node (P1) at (3,0) {0};
\node (P2) at (-1,0) {-1};
\node (P3) at (1,0) {-1};

\draw[thick] (F8) -- (P1) -- (F9);
\draw[thick] (F5) -- (P2) -- (F8);
\draw[thick] (F7) -- (P2);
\draw[thick] (F6) -- (P3) -- (F7);
\draw[thick] (F9) -- (P3);

\node (A) at (1,-1) {0};
\foreach \i in {1,2,3}
{\draw[thick] (A) -- (P\i);}
\end{tikzpicture}
\caption{The poset $\mathcal{P}_{F_1,\dots,F_4}$ and the value of its M\"obius function $\mu_{F_1, \dots, F_4}(-,P_\J)$.}
\label{fig:24153poset}
\end{figure}

The intersections $F_1 \cap F_3, F_2 \cap F_3, F_3 \cap F_4$ are triangles, and the intersections $F_1 \cap F_4, F_2 \cap F_4$ are squares, which has $h^*$-polynomial $1+z$.
We have $F_1 \cap F_2 = F_1 \cap F_2 \cap F_4$ is a segment and $F_1 \cap F_2 \cap F_3 = F_1 \cap F_2 \cap F_3 \cap F_4$ is a point.
All the other triple intersections are segments.
The $h^*$-polynomial of $P$ is therefore
\begin{align*}
    h^*(P_\J,z) &= z^2+4z^3 + (3(1+z)+1+2z)(1-z) - (3+2(1+z))(1-z)^2 + 2(1-z)^3 \\
    &= 1+3z+z^2.
\end{align*}
The result coincides with \cref{exem:24153}.
\end{exem}

\section{Tree positroids}\label{sec:tree}
When the plabic graph of a positroid is acyclic, we call it a \emph{tree positroid} \cref{appendix}. 
In this section, we apply \cref{thm:main} to the special case of a tree positroid.

To each plabic graph, one can associate a \emph{plabic tiling} \cref{appendix} \cite{plabictiling}.
Tree positroids are those positroids whose plabic tilings are \emph{bicolored subdivision}, as defined in \cite{parisi2024magic}. We follow the conventions of \cite{parisi2024magic}.

\begin{defi}
Let $\P_n$ be a convex $n$-gon with vertices labeled from 1 to $n$ in clockwise order. 
A \emph{bicolored subdivision} $\tau$ is a partition of $\P_n$ into black and white polygons such that two polygons sharing an edge have different colors. 
We say that $\tau$ has \emph{type} $(k,n)$ if any triangulation of the black polygons consists of exactly $k$ black triangles.
\end{defi}

\begin{defi}\label{def:arc}
Let $\tau$ be a bicolored subdivision. Given a pair of vertices $i,j$ of $\mathbf{P}_n$, we say that the arc $i \to j$ is 
\begin{itemize}
    \item \emph{compatible} with $\tau$ if the arc either is an edge of a black or white polygon, or lies entirely inside a black or white polygon of $\tau$,
    \item \emph{facet-defining} if it bounds a black polygon of $\tau$ to its left.
\end{itemize} If $i \to j$ is compatible with $\tau$, the \emph{area to the left of $i \to j$}, denoted by $\area(i \to j)$, is the number of black triangles to the left of $i\to j$ in any triangulation of the black polygons of $\tau$.
\end{defi}

\begin{figure}
    \centering
    \begin{tikzpicture}
        \draw node (1) at (0,2) {1};
        \draw node (2) at (2,0) {2};
        \node[inner sep=0pt, circle, fill=black, minimum size = 7pt] (b) at (0,0) {};
        \node[inner sep=0pt, circle, draw = black, minimum size=7pt] (w) at (-1,-1) {};
        \draw node (4) at (-3,-1) {4};
        \draw node (3) at (-1,-3) {3};
        \draw[thick] (1)--(b);
        \draw[thick] (2)--(b);
        \draw[thick] (b)--(w);
        \draw[thick] (w)--(3);
        \draw[thick] (w)--(4);
        \draw[fill = black, opacity = 0.5] (1,1) -- (1,-2) -- (-2,1) -- cycle;
        \draw (1,-2) -- (-2,-2) -- (-2,1);
        \draw node at (-2.2,1.2) {1};
        \draw node at (1.2,1.2) {2};
        \draw node at (1.2,-2.2) {3};
        \draw node at (-2.2,-2.2) {4};
    \end{tikzpicture}
    \caption{The tree plabic graph and bicolored subdivision associated to the Grassmann necklace $\J = (12, 23, 13, 14)$.}
    \label{fig:tree2413}
\end{figure}
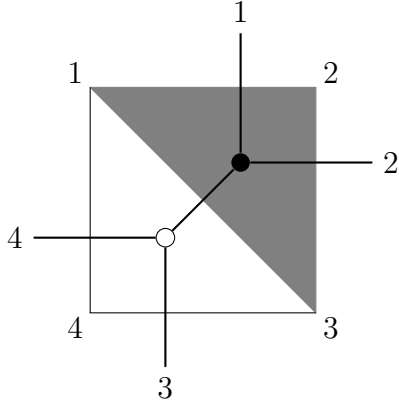

\begin{figure}
    \centering
    \begin{tikzpicture}[dot/.style={circle,inner sep=1pt,fill,label={#1},name=#1},
  extended line/.style={shorten <=-#1},
  extended line/.default=1cm]
    \def\radius{2}
    
    \foreach \i in {1,2,3,4,5} {
        \coordinate (P\i) at ({90 - 360/5 * (\i - 1)}:\radius);
    }
    
    \foreach \i in {1,2,3,4,5} {
        \pgfmathtruncatemacro{\j}{mod(\i,5) + 1}
        \draw (P\i) -- (P\j);
    }

    \foreach \i in {1,2,3,4,5} {
        \node at ({90 - 360/5 * (\i - 1)}: \radius + 0.5) {\i};
    }
    
    \draw[fill = black, opacity = 0.5] (P1) -- (P2) -- (P3) -- (P1);
    \draw[fill = black, opacity = 0.5] (P1) -- (P4) -- (P5) -- (P1);

    \node[inner sep=0pt, circle, fill=black, minimum size = 7pt] (b1) at (barycentric cs:P1=1,P2=1,P3=1) {};
    \node[inner sep=0pt, circle, fill=black, minimum size = 7pt] (b2) at (barycentric cs:P1=1,P4=1,P5=1) {};
    \node[inner sep=0pt, circle, draw = black, minimum size=7pt] (w) at (barycentric cs:P1=1,P3=1,P4=1) {};

    \def\radius{2.5}
    
    \foreach \i in {1,2,3,4,5} {
        \coordinate (Q\i) at ({55-360/5 * (\i - 1)}:\radius);
    }
    \foreach \i in {1,2,3,4,5} {
        \node at ({55-360/5 * (\i - 1)}: \radius + 0.5) {\i};
    }

    \draw[thick] (Q1) -- (b1) -- (Q2);
    \draw[thick] (b1) -- (w) -- (b2);
    \draw[thick] (w) -- (Q3);
    \draw[thick] (Q5) -- (b2) -- (Q4);
\end{tikzpicture}
    \caption{The tree plabic graph and bicolored subdivision associated to the Grassmann necklace $\J = (124, 234, 134, 145, 125)$.}
    \label{fig:tree24153}
\end{figure}

\begin{exem}
In \cref{fig:tree2413}, the arc $1 \to 3$ is facet defining, but $2\to4$ is not.
\end{exem}

The tree positroid polytopes are triangulated by $(w)$-simplices where $w$ extends a \emph{partial cyclic order} \cite{parisi2024magic}.

\begin{defi}\label{def:cyclic-order}
    A \emph{(partial) cyclic order} on a finite set $X$ is a ternary relation $C \subset \binom{X}{3}$ such that for all $a, b, c, d \in X$:
    \begin{align}
        (a,b,c) \in C &\implies (c, a, b) \in C \tag{cyclicity}\\
        (a,b,c) \in C &\implies (c, b, a) \notin C \tag{asymmetry}\\
        (a,b,c) \in C \text{ and }(a, c, d) \in C &\implies (a,b,d) \in C \tag{transitivity}
    \end{align}
    A cyclic order $C$ is \emph{total} if for all $a, b, c\in X$, either $(a,b,c)\in C$ or $(a, c, b) \in C$.
    A cyclic order $C'$ is a \emph{circular extension} of $C$ if $C \subseteq C'$.
    We denote the set of all circular extensions of $C$ by $\Ext(C)$.
  \end{defi}

Informally, a total cyclic order $C$ on $[n]$ is a way of placing $1, \dots, n$ on a circle, just as a total order is a way of placing $1, \dots, n$ on a line.

\begin{defi}
Let $w=w_1\dots w_n \in S_n$. The $w$-order $C_w$ is the total cyclic order obtained by placing $w_1, w_2, \dots, w_n$ on the circle clockwise. We identify this total cyclic order with the $n$-cycle $(w)$ and so may write $(w)$ for $C_w$ or write $C_w=(w_1\ w_2 \dots w_n)$.
\end{defi}  

Note that each total cyclic order on $[n]$ is of the form $C_w$ for a unique permutation $w \in D_n$ (cf. \cref{Dk+1n} for the definition of $D_n$). We move interchangeably between $w \in D_n$, the $n$-cycle $(w)$ and the total cyclic order $C_w$.

\begin{defi}
Let $x_1,\dots,x_m$ be a sequence of $m$ distinct elements of $[n]$ (for $3 \leq m \leq n$).
We let $C=C_{(x_1, x_2 \dots, x_m)}$ denote the partial cyclic order on $[n]$ in which 
for each triple $1\leq i < j < \ell \leq m$ 
        we have $(x_i,x_j, x_{\ell}) \in C$ (which implies by cyclicity that
        also $(x_j,x_{\ell}, x_i)$ and $(x_{\ell}, x_i, x_j)$ lie in $C$).  
We call this partial cyclic order a \emph{chain}.
\end{defi}

\begin{defi}\label{def:cyclic-from-perm-subdiv}
Let $\tau$ be a bicolored subdivision of $\mathbf{P}_n$ with $q$ polygons $P_1, \dots, P_q$ which are black or white.
If $P_a$ is white (respectively, black), we let $v_1,\dots,v_r$ denote its list of vertices read in clockwise (respectively, counterclockwise) order.  We then associate the chain $C_a = C_{(v_1,\dots,v_r)}$ to $P_a$.  Finally we define the \emph{$\tau$-order}
to be the partial cyclic order which is the union of the partial cyclic orders associated
to the black and white polygons: 
    \[C_{\tau}:=C_1 \cup \dots \cup C_q.\]
\end{defi}

Not all cyclic orders have a circular extension \cite{Megiddo1976}, that is, 
$\Ext(C)$ could be empty. 
Moreover, the problem of determining
whether a cyclic order has a circular extension is NP-complete
\cite{Megiddo1976}.

\begin{thm}[\cite{m=2hypersimplex}, Thm. 9.2, Prop. 9.5, Prop. 9.6]
If $\tau$ is a bicolored subdivision, then the corresponding positroid polytope $\Gamma_\tau$ is cut out of $\R^n$ by the equality $x_1+\cdots+x_n = k$ and
$$\area(i \to j) \le x_i + \cdots x_{j-1} \le \area(i \to j)+1 \quad \text{for any compatible arc } i \to j \text{ of } \tau $$
or, alternatively, by the following facet inequalities
\begin{itemize}
    \item $x_i \ge 0$ if there is a white polygon of $\tau$ with vertex $i$;
    \item $x_i + \cdots + x_{j-1} \ge \area(i \to j)$, if $i \to j$ is a facet-defining arc of $\tau$.
\end{itemize}
\end{thm}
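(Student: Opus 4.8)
The plan is to combine the circuit triangulation of \cref{thm:trian}, specialized to tree positroids as recalled just before the statement, with the per-simplex facet description of \cref{lem:wsimplex}. Concretely, $\Gamma_\tau$ is triangulated by the simplices $\Delta_{(w)}$ over the permutations $w\in D_n$ whose total cyclic order $C_w$ is a circular extension of the $\tau$-order $C_\tau$ of \cref{def:cyclic-from-perm-subdiv}; call this index set $D_\tau$, so $\Gamma_\tau=\bigcup_{w\in D_\tau}\Delta_{(w)}$. By \cref{lem:wsimplex}, on each $\Delta_{(w)}$ and each cyclic interval $[i,j]$ we have $\cdes_L(w|_{[i,j]})-1\le x_{[i,j]}\le\cdes_L(w|_{[i,j]})$, so the first displayed description of $\Gamma_\tau$ (together with $x_1+\dots+x_n=k$, which holds on any rank-$k$ matroid polytope) reduces to the identity
\[
\cdes_L(w|_{[i,j]})=\area(i\to j)+1\qquad\text{whenever }i\to j\text{ is compatible with }\tau\text{ and }w\in D_\tau.
\]
Granting this, each $\Delta_{(w)}$, hence $\Gamma_\tau$, satisfies $\area(i\to j)\le x_{[i,j]}\le\area(i\to j)+1$ for every compatible arc, while the reverse inclusion is immediate since the $\Delta_{(w)}$ tile $\Gamma_\tau$.

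The crux is this identity, and the reason to expect it is that \emph{compatibility of $i\to j$ makes $\cdes_L(w|_{[i,j]})$ constant over $w\in D_\tau$}: a compatible arc is an edge of, or lies interior to, a single black or white polygon of $\tau$, so it has a well-defined left side relative to the polygons of $\tau$, and the number of black triangles on that side — namely $\area(i\to j)$ — is unaffected by how the remaining freedom in choosing $C_w\supseteq C_\tau$ is resolved. What then has to be verified is that $\cdes_L(w|_{[i,j]})$ equals those $\area(i\to j)$ black triangles plus one boundary contribution, which is a translation between the cyclic left descents of the restricted permutation $w|_{[i,j]}$, the circuit $I_{w_1}\to\dots\to I_{w_n}$ of $\Delta_{(w)}$, and a triangulation of the black polygons of $\tau$. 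This translation, together with the precise claim that ``compatible'' is exactly the hypothesis making the count stable, is the step I expect to be the main obstacle; the rest is bookkeeping.

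For the alternative, facet-level description I would argue in two steps. First, identify the genuine facets: by \cref{lem:facets} and \cref{thm:edge}, a facet of some $\Delta_{(w)}$ survives on $\partial\Gamma_\tau$ exactly when the corresponding transposition of the cycle $(w)$ is blocked — either because its two entries are cyclically adjacent, or because the swap would leave $D_\tau$, i.e. would violate a chain relation of $C_\tau$ — and unwinding these two cases, using the identity above, shows that the facets of $\Gamma_\tau$ are the hyperplanes $x_{[i,j]}=\area(i\to j)$ for facet-defining arcs $i\to j$ together with $x_i=0$ for the vertices $i$ of white polygons, with $\Gamma_\tau$ on the $\ge$ side of each. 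Second, show these already cut out $\Gamma_\tau$, i.e. the remaining compatible-arc inequalities follow: an upper inequality $x_{[i,j]}\le\area(i\to j)+1$ is equivalent, via $x_{[i,j]}+x_{[j,i]}=k$ and $\area(i\to j)+\area(j\to i)=k$ (the chord $\{i,j\}$ partitioning the black triangles between its two sides), to $x_{[j,i]}\ge\area(j\to i)-1$, which follows either from the facet inequality for $j\to i$ or, when $j\to i$ is not facet-defining, by summing the facet inequalities over the other edges of the polygon on the far side of $\{i,j\}$ and telescoping the partial sums $x_{[\cdot,\cdot]}$ and $\area(\cdot)$ around that polygon; the same telescoping recovers the remaining lower inequalities, for arcs interior to a polygon or bounding a white polygon. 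This finishes the proof modulo the core identity, which — with the case analysis of which transpositions are blocked — is where essentially all the work lies.
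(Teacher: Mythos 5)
First, note that the paper does not prove this statement at all: it is imported verbatim from \cite{m=2hypersimplex} (Thm.~9.2, Prop.~9.5, Prop.~9.6) and used as a black box, so there is no internal proof to compare against. Your attempt is therefore a re-derivation, and as such it has two genuine gaps. The first is that everything is reduced to the identity $\cdes_L(w|_{[i,j]})=\area(i\to j)+1$ for $w\in\Ext(C_\tau)$ and $i\to j$ compatible, which you correctly identify as ``where essentially all the work lies'' but then do not prove; this identity \emph{is} the content of the cited theorem (it is exactly the dictionary between the subdivision data and the alcoved-polytope inequalities), so deferring it leaves the theorem unproved rather than reduced to something easier. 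Spot checks (e.g.\ $\J=(12,23,13,14)$ with $w=2134$, or the pentagon example with $w=24135$ and the arc $1\to5$) confirm the identity is true, but your justification — that compatibility makes the count ``stable'' over circular extensions — is a heuristic, not an argument; one still has to relate $\cdes_L(w|_{[i,j]})$ to the vertex sets $I_r$ of the circuit and then to a triangulation of the black polygons.

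The second gap is structural. Your containment $Q:=\{x:\area(i\to j)\le x_{[i,j]}\le\area(i\to j)+1\}\subseteq\Gamma_\tau$ is not ``immediate from the tiling'': a point of $Q$ lies in some $\Delta_{(u)}$ of the ambient hypersimplex triangulation, and you must show the constraints force $u\in\Ext(C_\tau)$, which is the converse direction of the same dictionary. Moreover, the triangulation $\Gamma_\tau=\bigcup_{w\in\Ext(C_\tau)}\Delta_{(w)}$ that you take as your starting point is Cor.~4.8 of \cite{parisi2024magic}, which is itself derived in the literature from the inequality description you are trying to prove; so unless you instead start from \cref{thm:trian} (proved here from \cref{prop:posi}) and establish directly that $D_\J=\Ext(C_\tau)$ and that the necklace inequalities $x_{[i,a_j^i]}\le j-1$ coincide with the $\area$ inequalities, the argument is circular. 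The facet-level telescoping in your second step is plausible and would be routine once the identity and the dictionary are in place, but as written the proposal is an outline with its central lemma missing rather than a proof.
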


\begin{prop}[Cor. 4.8, \cite{parisi2024magic}]
Let $\sigma$ be a bicolored subdivision of type $(k,n)$. Then
$$\Gamma_\sigma = \bigcup_{(w) \in \Ext(C_\sigma)} \Delta_{(w)}.$$
That is, $\Gamma_\sigma$ is the union of $(w)$-simplices $\Delta_{(w)}$.
\end{prop}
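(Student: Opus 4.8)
The plan is to compare the defining inequalities of $\Gamma_\sigma$ with those of the individual $(w)$-simplices. By the description of $\Gamma_\sigma$ recalled above from \cite{m=2hypersimplex}, $\Gamma_\sigma$ is cut out of its ambient hyperplane by $\area(i\to j)\le x_{[i,j]}\le\area(i\to j)+1$ for every arc $i\to j$ compatible with $\sigma$ (see \cref{def:arc}); and by the second assertion of \cref{lem:wsimplex}, the $(w)$-simplex $\Delta_{(w)}$ satisfies $\cdes_L(w|_{[i,j]})-1\le x_{[i,j]}\le\cdes_L(w|_{[i,j]})$ for all $i,j$. For an arc $i\to j$ with $i\ne j$, the linear form $x_{[i,j]}$ is nonconstant on the $(n-1)$-dimensional simplex $\Delta_{(w)}$ and takes integer values at the vertices $e_{I_r}$, so on the relative interior of $\Delta_{(w)}$ one has $\cdes_L(w|_{[i,j]})-1<x_{[i,j]}<\cdes_L(w|_{[i,j]})$; likewise, on the relative interior of $\Gamma_\sigma$, $\area(i\to j)<x_{[i,j]}<\area(i\to j)+1$ for each compatible arc. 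As two open unit intervals with integer endpoints meet only if they coincide, comparing the inequalities in both directions yields the key reduction
\[
\Delta_{(w)}\subseteq\Gamma_\sigma\quad\Longleftrightarrow\quad\cdes_L(w|_{[i,j]})=\area(i\to j)+1\ \text{ for every arc }i\to j\text{ compatible with }\sigma.
\]

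The proposition then reduces to the combinatorial equivalence
\[
(w)\in\Ext(C_\sigma)\quad\Longleftrightarrow\quad\cdes_L(w|_{[i,j]})=\area(i\to j)+1\ \text{ for every arc }i\to j\text{ compatible with }\sigma.
\]
Granting it, the inclusion $\bigcup_{(w)\in\Ext(C_\sigma)}\Delta_{(w)}\subseteq\Gamma_\sigma$ is immediate from the key reduction. For the reverse inclusion, note that $\Gamma_\sigma$ is an $(n-1)$-dimensional positroid polytope of some rank $r$, hence lies inside the hypersimplex $\Delta_{r,n}$, which by \cref{thm:trian} applied to the uniform matroid $U_{r,n}$ is a union of $(w)$-simplices with pairwise disjoint relative interiors. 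A generic point of the relative interior of $\Gamma_\sigma$ then lies in the relative interior of a unique such $\Delta_{(w)}$; being simultaneously in the relative interiors of $\Delta_{(w)}$ and of $\Gamma_\sigma$, it forces $\cdes_L(w|_{[i,j]})=\area(i\to j)+1$ for every compatible arc, so $\Delta_{(w)}\subseteq\Gamma_\sigma$ and, by the combinatorial equivalence, $(w)\in\Ext(C_\sigma)$. Taking closures gives $\Gamma_\sigma\subseteq\bigcup_{(w)\in\Ext(C_\sigma)}\Delta_{(w)}$.

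It remains to prove the combinatorial equivalence, and this is the step we expect to be the main obstacle. The plan is to induct on the number of polygons of $\sigma$. The base cases (a single black triangle, or a single white polygon) are checked directly. For the inductive step, remove from $\sigma$ a polygon $P$ joined to the rest along a single edge $e=(a,b)$ (such a $P$ exists, since the polygons of $\sigma$ are glued in a tree-like fashion), leaving a smaller bicolored subdivision $\sigma'$; then, by \cref{def:cyclic-from-perm-subdiv}, a total cyclic order $(w)$ extends $C_\sigma$ precisely when its restrictions extend $C_{\sigma'}$ and $C_P$ and agree across $e$, while the arcs compatible with $\sigma$ split into those on the $\sigma'$-side of $e$, those on the $P$-side, and those running along $e$; along this splitting both $\area(i\to j)$ and $\cdes_L(w|_{[i,j]})$ decompose additively, reducing the equivalence for $\sigma$ to those for $\sigma'$ and for $P$. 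The delicate point is keeping exact track of the single \emph{cyclic} (wrap-around) left descent of $w|_{[i,j]}$ when $i\to j$ crosses the edge $e$; this bookkeeping is the technical heart of the argument. Since the combinatorial equivalence together with the above is precisely \cite[Cor.~4.8]{parisi2024magic}, one may instead simply cite that reference.
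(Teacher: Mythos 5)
The paper gives no proof of this proposition at all: it is imported verbatim as \cite[Cor.~4.8]{parisi2024magic}, so the only "proof" in the paper is the citation. Your proposal therefore cannot match the paper's argument; what matters is whether it stands on its own.

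Your geometric scaffolding is sound and, as far as I can tell, is essentially the strategy of the cited reference. Comparing the facet description of $\Gamma_\sigma$ from \cite{m=2hypersimplex} with the second statement of \cref{lem:wsimplex}, and using that a nonconstant linear form with integer vertex values sweeps out exactly one open unit interval on the relative interior of each full-dimensional cell, correctly reduces the containment $\Delta_{(w)}\subseteq\Gamma_\sigma$ to the condition $\cdes_L(w|_{[i,j]})=\area(i\to j)+1$ for all compatible arcs; and the covering direction via the circuit triangulation of the ambient hypersimplex plus a density/closure argument is fine (modulo the unstated but true fact that $\Gamma_\sigma$ is $(n-1)$-dimensional for a bicolored subdivision). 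The genuine gap is the combinatorial equivalence $(w)\in\Ext(C_\sigma)\iff\cdes_L(w|_{[i,j]})=\area(i\to j)+1$, which is the entire content of the proposition and which you only sketch. Two points in the sketch would fail as written. First, by \cref{def:arc} every compatible arc lies inside a single polygon of $\sigma$, so the induction really reduces to proving the equivalence for one black or white polygon against the chain of \cref{def:cyclic-from-perm-subdiv}; but there the "additive decomposition" of $\cdes_L(w|_{[i,j]})$ is not available, because cyclic left descents of a restriction $w|_{[i,j]}$ depend on the relative placement of \emph{all} letters of the cyclic interval $[i,j]$ in $w$, not only those belonging to the polygon, so the quantity does not localize to the polygon in the way the induction needs. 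Second, the reverse implication --- deducing from the numerical conditions on arcs that $(w)$ circularly extends each chain $C_a$ --- is not addressed by the induction at all. Since you end by proposing to cite \cite{parisi2024magic} for exactly this step, the honest summary is that your write-up supplies a correct reduction but not a proof; the proposition remains, as in the paper, an external input.
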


\begin{exem}
In \cref{fig:tree2413}, we have chains $(3,2,1)$ and $(1,3,4)$. The only two cyclic total order on $[4]$ with these two chains are $2134$ and $1324$, which coincide with our computation in \cref{exem:2413}.
\end{exem}

\begin{exem}
In \cref{fig:tree24153}, we have that $C_\tau$ consists of the chains $(3,2,1), (1,3,4)$ and $(5,4,1)$. We can check that $D_\J$ in \cref{exem:24153} is exactly the set of circular extensions of $C_\tau$.
\end{exem}

Analogous to \cite[Theorem 2.9]{alcove1}, we can characterize the edges in the graph of the circuit triangulation of tree positroid polytopes via the following corollary to \cref{thm:main}.

\begin{coro}\label{coro:main}
Let $P_\tau$ be a tree positroid polytope associated with the bicolored subdivision $\tau$.
Let $\Gamma_\tau$ be the graph of the circuit triangulation of $P_\tau$.
For any $w_0 \in \Ext(C_\tau)$, let $(\mathcal{P}_{w_0,\tau}, \prec)$ be the corresponding poset on $\Ext(C_\tau)$.
The cover statistic of $\mathcal{P}_{w_0,\tau}$ gives the $h^*$-polynomial of $P_\tau$, i.e.,
$h^*(P_\tau, z) = \sum_{w \in \Ext(C_\tau)} z^{\cover(w)}$ where
$\cover(w) = \#\{u \in \Ext(C_\tau) \mid u \precdot w\}$ is the number of elements covered by $w$ in $\mathcal{P}_{w_0,\tau}$.
\end{coro}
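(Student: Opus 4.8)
The plan is to realize \cref{coro:main} as a direct specialization of \cref{thm:main} once we identify the relevant combinatorial data. First I would invoke the discussion preceding the corollary: by \cite[Cor. 4.8]{parisi2024magic} the tree positroid polytope $P_\tau$ equals $\bigcup_{(w)\in\Ext(C_\tau)}\Delta_{(w)}$, so this circuit triangulation of $P_\tau$ coincides with the circuit triangulation studied in \cref{sec:trian}, and hence $\Ext(C_\tau)$ plays the role of $D_\J$ for the Grassmann necklace $\J$ associated to $\tau$. Concretely, I would note that $\tau$ determines a bicolored subdivision of type $(k,n)$, that its plabic graph is a tree, and that its associated positroid is connected (the plabic graph being a tree forces connectedness), so $P_\tau = P_\J$ for the corresponding connected positroid, and the set $D_\J$ from \cref{thm:trian} is exactly $\Ext(C_\tau)$.

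The second step is purely notational: since $P_\tau = P_\J$ and $D_\J = \Ext(C_\tau)$, the graph $\Gamma_\tau$ of \cref{coro:main} is literally the graph $\Gamma_\J$ of \cref{def:graph}, and for any chosen root $w_0 \in \Ext(C_\tau)$ the poset $(\mathcal{P}_{w_0,\tau},\prec)$ is the poset $(\mathcal{P}_{w_0,\J},\prec)$ of \cref{def:digraph}. Then \cref{thm:main} applies verbatim and yields
$$h^*(P_\tau,z) = h^*(P_\J,z) = \sum_{w\in D_\J} z^{\cover(w)} = \sum_{w\in\Ext(C_\tau)} z^{\cover(w)},$$
where $\cover(w)$ is the number of elements $w$ covers in $\mathcal{P}_{w_0,\tau}$. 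This is exactly the claimed formula, so the proof is essentially a dictionary translation.

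The only genuine content — and the step I expect to require the most care — is justifying that the bicolored-subdivision description of $P_\tau$ really produces a \emph{connected} positroid polytope whose circuit triangulation is the one from \cref{sec:trian}, i.e. matching the facet inequalities $\area(i\to j)\le x_{[i,j]}\le \area(i\to j)+1$ of \cite[Thm. 9.2, etc.]{m=2hypersimplex} with the Grassmann-necklace inequalities of \cref{prop:posi} and with the $(w)$-simplex inequalities of \cref{lem:wsimplex}. Once the arc areas are matched with the quantities $\cdes_L(w|_{[i,j]})$ appearing in \cref{thm:trian}, the identification $\Ext(C_\tau) = D_\J$ follows. I would package this matching as a short lemma (or cite it from \cite{parisi2024magic,m=2hypersimplex}) and then assemble the corollary as above; everything else is immediate from \cref{thm:main}.
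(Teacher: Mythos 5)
Your proposal is correct and matches the paper's (implicit) argument: the paper treats this as an immediate specialization of \cref{thm:main}, using the cited result $\Gamma_\sigma = \bigcup_{(w)\in\Ext(C_\sigma)}\Delta_{(w)}$ from \cite{parisi2024magic} to identify $\Ext(C_\tau)$ with $D_\J$ for the associated connected positroid. Your extra care about matching the arc-area inequalities with the $\cdes_L$ inequalities of \cref{thm:trian} is reasonable but goes beyond what the paper records, since it simply invokes the identification and applies \cref{thm:main} verbatim.
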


\section{Positroid tiling of positroid polytopes}
Tilings of the hypersimplex are connected to tilings of the amplituhedron, and can be used to compute the scattering amplitudes.
It was known that positroids tiles of the hypersimplex are in bijection with tree positroids.
We study the structure of the positroid tilings of positroid polytopes whose plabic graph contains cycles.

\begin{defi}[\cite{wil23}]\label{def:tiling0} 
Let $\phi: \Grk \to X$ be 
a continuous surjective map where $\dim X=d$. The image ${\phi(S)}$ of a positroid cell $S \subset \Grk$ is an \emph{open tile} for $X$ (with respect to $\phi$) if $\phi$ is injective on $S$ and $\dim S =d$. The closure $\overline{\phi(S)}$ of an open tile is called a \emph{tile} for $X$.
 
A \emph{positroid tiling} of $X$ (with respect to $\phi$) is a collection $\{\overline{\phi(S)}\}_{S \in \mathcal{C}}$ of tiles 
satisfying:
\begin{itemize}
	\item (disjointness): open tiles $\phi(S)$
		and $\phi(S')$ for distinct positroid cells $S$ and $S'$ in $\mathcal{C}$ are disjoint
              \item (covering): $\cup_{S \in \mathcal{C}}\overline{\phi(S)} = X$.
	\end{itemize}
\end{defi}

\begin{thm}[Prop 3.15, 3.16, \cite{LPW}]
The positroid tiles for $\Delta_{k+1,n}$ are in bijection with tree positroids.
\end{thm}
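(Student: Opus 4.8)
The plan is to unwind \cref{def:tiling0} in the present case, reduce the statement to a single property of the moment map, and then establish that property for tree positroids. Here $X = \Delta_{k+1,n}$ and the relevant map is the moment map $\mu \colon \mathrm{Gr}_{k+1,n}^{\geq 0} \to \Delta_{k+1,n}$; recall that on a positroid cell $S_\pi$ the image $\mu(S_\pi)$ has closure equal to the positroid polytope $P_\pi$. Thus $\overline{\mu(S_\pi)}$ is an open tile exactly when (i) $\dim S_\pi = \dim \Delta_{k+1,n} = n-1$ and (ii) $\mu|_{S_\pi}$ is injective, and in that case the tile is the positroid polytope $P_\pi$ itself. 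Since a connected positroid is recovered from its matroid polytope, it suffices to prove that (i) and (ii) together hold if and only if $\pi$ is a tree positroid; the bijection is then $\pi \mapsto \overline{\mu(S_\pi)} = P_\pi$.

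First I would treat the dimension condition. Suppose $\pi$ is connected. By Postnikov's correspondence between positroid cells and reduced plabic graphs, $\dim S_\pi$ equals the number of faces of $G_\pi$ minus one; an Euler-characteristic count in the disk shows this number of faces is $n + \beta_1(G_\pi)$, where $\beta_1(G_\pi)$ is the number of independent cycles of $G_\pi$, so $\dim S_\pi = n - 1 + \beta_1(G_\pi)$. Hence $\dim S_\pi = n-1$ if and only if $G_\pi$ is acyclic, i.e.\ $\pi$ is a tree positroid. If instead $\pi$ is disconnected, then $P_\pi$ is a direct product of smaller matroid polytopes and $\dim P_\pi \le n-2$; since $\mu(S_\pi) \subseteq P_\pi$, invariance of domain shows such a cell cannot satisfy (ii) once (i) holds. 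Therefore every cell satisfying (i)--(ii) comes from a connected positroid, and among those the tree positroids are precisely the ones satisfying (i).

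The heart of the matter is the remaining implication: for a tree positroid $\pi$, the map $\mu|_{S_\pi}$ is injective. Equidimensionality with the target does not by itself give this, since the fibers of $\mu$ on a positroid cell are in general transverse to, rather than equal to, the torus orbits; one must exploit the tree structure. The key point is that, because $G_\pi$ is acyclic, each Plücker coordinate of a point of $S_\pi$ is a single monomial in the edge weights, and after fixing the vertex gauge the edge weights give a faithful parametrization of $S_\pi$ by $n-1$ positive parameters. In these coordinates every $\mu_i$ becomes a ratio of sums of squared monomials; equivalently $\mu$ factors as the gradient of a strictly convex $\log$-sum-exp function on $\R^{n-1}$ followed by a linear map, and using the matroid of $\pi$ one checks that this composite is injective on the relevant open region. (For a positroid with cycles the weight parametrization is not faithful, which is exactly why injectivity fails there.)

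Assembling the pieces: for the converse, assume (i) and (ii) hold. Then $\dim S_\pi = n-1$, and invariance of domain applied to the injection $\mu|_{S_\pi}$ gives $n-1 = \dim S_\pi \le \dim P_\pi \le n-1$, so $\dim P_\pi = n-1$ and $\pi$ is connected; the second paragraph then forces $\pi$ to be a tree positroid. Hence (i)--(ii) hold if and only if $\pi$ is a tree positroid, and by the first paragraph $\pi \mapsto \overline{\mu(S_\pi)} = P_\pi$ is a bijection from tree positroids onto the positroid tiles of $\Delta_{k+1,n}$. I expect the third paragraph to be the main obstacle: the points needing genuine work are the faithfulness of the edge-weight parametrization for tree positroids and the injectivity of the $\log$-sum-exp gradient map on the exponent vectors of the positroid, which I would carry out following the approach of \cite{LPW}.
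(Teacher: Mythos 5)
This statement is quoted by the paper from \cite{LPW} (Prop.\ 3.15--3.16) and is not proved in the paper at all, so there is no internal argument to compare against; your proposal has to stand on its own. Its architecture does match the strategy of the cited source: unwind \cref{def:tiling0} for the moment map $\mu$, use the face count of a reduced plabic graph to show $\dim S_\pi = n-1$ exactly for trees among connected positroids, dispose of disconnected positroids by invariance of domain, and reduce everything to injectivity of $\mu$ on tree positroid cells. Those reductions are correct as written.

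The gap is that the one step you yourself identify as ``the heart of the matter'' --- injectivity of $\mu|_{S_\pi}$ for a tree positroid --- is asserted rather than proved, and your closing sentence defers it back to \cite{LPW}, which is the very result being proved. Concretely, after gauge-fixing the edge weights of an acyclic plabic graph you get a monomial (toric) parametrization of $S_\pi$ by $(\R_{>0})^{n-1}$, and $\mu$ becomes the algebraic moment map of the projective toric variety attached to the exponent vectors $\{e_B : B \in \B\}$; its injectivity on the positive part is a genuine theorem (it requires that the exponent vectors affinely span an $(n-1)$-dimensional space and then the strict convexity/Legendre-duality argument for the log-partition function), not something that follows from ``one checks.'' You need to either carry that computation out or cite the toric moment-map bijectivity theorem explicitly, together with the verification that the monomial parametrization of a tree positroid cell is faithful of the right dimension. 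A smaller inaccuracy: your parenthetical claim that for positroids with cycles ``the weight parametrization is not faithful'' is false --- for reduced plabic graphs the boundary measurement map is injective on edge weights modulo gauge regardless of cycles; what kills injectivity of $\mu$ in the cyclic case is simply that $\dim S_\pi > n-1$, which your own invariance-of-domain argument already handles. This parenthetical is not load-bearing, but it should be removed or corrected.
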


\begin{prop}[Prop. 5.14, \cite{parisi2024magic}]
Let $\Gamma_M \subset \Delta_{k+1,n}$ be a positroid polytope. Then there is a positroid subdivision of $\Delta_{k+1,n}$ that contains $\Gamma_M$. Moreover, each full-dimensional positroid polytope admits a positroid tiling.
\end{prop}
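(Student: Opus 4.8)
The plan is to deduce both statements from the positive tropical Grassmannian. Recall from \cite{LPW} that the positroid subdivisions of $\Delta_{k+1,n}$ are exactly the regular (coherent) subdivisions induced by the tropical Plücker vectors lying in the positive tropical Grassmannian $\mathrm{Trop}^+\mathrm{Gr}(k+1,n)$, and (as recalled in this section) that the positroid tiles for $\Delta_{k+1,n}$ are exactly the tree positroids; in particular a positroid subdivision of a full-dimensional positroid polytope is a positroid tiling with respect to the moment map precisely when all of its cells are tree positroid polytopes.

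\emph{First statement.} I would produce a height vector in $\mathrm{Trop}^+\mathrm{Gr}(k+1,n)$ whose induced subdivision has $\Gamma_M = P_M$ as a cell. Since the positroid cell $S_M$ lies in the closure of the top cell $\mathrm{Gr}^{>0}_{k+1,n}$ \cite{postnikov2006}, the curve selection lemma for semialgebraic sets gives a real-analytic arc $V(t)$, $t\in[0,\varepsilon)$, with $V(0)\in S_M$ and $V(t)\in \mathrm{Gr}^{>0}_{k+1,n}$ for $t>0$. Viewing the Plücker coordinates of $V(t)$ as Puiseux series in $t$, with positive leading coefficients by the positivity of $V(t)$, the valuation vector $p:=\bigl(\operatorname{val}\Delta_I(V(t))\bigr)_I$ lies in $\mathrm{Trop}^+\mathrm{Gr}(k+1,n)$ and is finite, with $p_I=0$ for $I\in\B(M)$ and $p_I>0$ for $I\notin\B(M)$. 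In the lifted polytope $\operatorname{conv}\{(e_I,p_I)\}$ the face minimizing the last coordinate is $\operatorname{conv}\{(e_I,0):I\in\B(M)\}$; this is a lower face, and it projects onto $P_M$, so $\Gamma_M$ is a cell of the positroid subdivision $\mathcal S_p$ induced by $p$. When $M$ is connected this cell is full-dimensional, as it should be, and when $M=U_{k+1,n}$ the statement is trivial.

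\emph{Second statement.} Take the positroid subdivision $\mathcal S_p$ above, which has the given full-dimensional $\Gamma_M$ as a maximal cell. If some cell of $\mathcal S_p$ fails to be a tree positroid polytope, equivalently its plabic graph has a cycle, then it admits a proper positroid refinement; iterating, and using that a lattice polytope admits only finitely many subdivisions, one arrives at a positroid tiling $\mathcal S'$ of $\Delta_{k+1,n}$ refining $\mathcal S_p$ (alternatively, perturb $p$ to a generic nearby point of the fan $\mathrm{Trop}^+\mathrm{Gr}(k+1,n)$). Because $\mathcal S'$ refines $\mathcal S_p$ and $\Gamma_M$ is a cell of $\mathcal S_p$, the cells of $\mathcal S'$ contained in $\Gamma_M$ subdivide $\Gamma_M$ into tree positroid polytopes, which is a positroid tiling of $\Gamma_M$ in the sense of \cref{def:tiling0} with $\phi$ the moment map restricted to $\mu^{-1}(\Gamma_M)$.

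\emph{Main obstacle.} The delicate point is the first step: one must know that $p$ genuinely lies in $\mathrm{Trop}^+\mathrm{Gr}(k+1,n)$, not merely that it is a tropical Plücker vector, since this is exactly what forces \emph{every} cell of $\mathcal S_p$ — not just $\Gamma_M$ — to be a positroid polytope; this is where the positivity of the arc $V(t)$ and the characterization of $\mathrm{Trop}^+\mathrm{Gr}$ from \cite{LPW} are used in an essential way. A secondary issue is to confirm that a positroid refinement of $\mathcal S_p$ all of whose cells are tree positroids exists, from the iterative argument or from purity of the fan, and that restricting such a refinement to a cell of a coarsening indeed yields a tiling in the intended sense.
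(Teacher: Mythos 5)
The paper does not actually prove this proposition: it is imported verbatim as Prop.~5.14 of \cite{parisi2024magic}, so there is no in-paper argument to compare against. Your reconstruction via the positive tropical Grassmannian is, as far as I can tell, correct and is in the same spirit as the source: the two standard ingredients are (i) producing a point of $\mathrm{Trop}^+\mathrm{Gr}(k+1,n)$ with $p_I=0$ exactly on $\B(M)$ by taking valuations along a positive arc limiting into the cell $S_M$ (which exists because $S_M$ lies in the closure of the totally positive part), and (ii) the \cite{LPW} dictionary identifying positive tropical Plücker vectors with regular positroid subdivisions and finest such subdivisions with tilings by tree positroid polytopes. Your observation that the face of the lifted polytope on which the height is minimized is a lower face projecting onto $P_M$ is the right way to see that $\Gamma_M$ is a cell of $\mathcal S_p$. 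Two points deserve tightening. First, in the refinement step, the cleanest formulation is not ``a non-tree cell admits a proper refinement, iterate,'' but rather: the poset of positroid subdivisions refining $\mathcal S_p$ is finite and nonempty, so it has a minimal element, which by definition is a finest positroid subdivision of $\Delta_{k+1,n}$; by \cite{LPW} such a subdivision is a tiling, so all its cells are tree positroid polytopes. (Your phrasing quietly uses the contrapositive of ``finest $\Rightarrow$ all cells are trees,'' which is the same content, but the perturbation alternative you mention needs care since $\mathrm{Trop}^+\mathrm{Gr}$ is a fan, not a convex set, and $p+\epsilon q$ need not stay in it; the intersection-of-positroid-polytopes argument for common refinements is the safe route there.) Second, for the restriction to $\Gamma_M$ to be a positroid tiling in the sense of \cref{def:tiling0} you should take $\phi$ to be the moment map restricted to $\overline{S_M}$ (not to $\mu^{-1}(\Gamma_M)$), and note that disjointness of the open tiles inside $\Gamma_M$ is inherited from disjointness in $\Delta_{k+1,n}$. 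These are presentational rather than substantive gaps.
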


\begin{coro}[Cor. 5.15, \cite{parisi2024magic}]
Let $\Gamma_M \subset \Delta_{k+1,n}$ be a full-dimensional positroid polytope. Then $\Gamma_M$ admits a positroid tiling, and all positroid tilings of $\Gamma_M$ have the same cardinality.
\end{coro}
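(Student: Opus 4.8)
The plan is to reduce the statement to the (harder, and already known) fact that every positroid tiling of the full hypersimplex $\Delta_{k+1,n}$ has the same number of tiles, via a ``completion'' argument. Existence of a positroid tiling of $\Gamma_M$ is immediate from the preceding proposition, so all the content is in the equicardinality assertion.

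First I would invoke the preceding proposition to fix, once and for all, a positroid subdivision of the hypersimplex in which $\Gamma_M$ occurs as a cell; writing its maximal cells as $\Gamma_M, Q_1, \dots, Q_r$, each $Q_i$ is full-dimensional (because $\Gamma_M$ is), hence is a full-dimensional positroid polytope, hence --- by the same proposition --- admits a positroid tiling $\mathcal{T}_i$, which I also fix. Now let $\mathcal{T}$ be an \emph{arbitrary} positroid tiling of $\Gamma_M$. The key claim is that the union $\mathcal{T} \cup \mathcal{T}_1 \cup \cdots \cup \mathcal{T}_r$ is again a positroid tiling, this time of $\Delta_{k+1,n}$. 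Covering is clear. For disjointness of the open tiles one uses two facts: (i) the relative interiors of $\Gamma_M, Q_1, \dots, Q_r$ are pairwise disjoint, since these are the maximal cells of a polytopal subdivision of $\Delta_{k+1,n}$; and (ii) the open tile $\phi(S)$ attached to a positroid cell $S$ is the relative interior of the corresponding positroid polytope, so that the open tiles of $\mathcal{T}$ lie in $\mathrm{relint}(\Gamma_M)$ and the open tiles of $\mathcal{T}_i$ lie in $\mathrm{relint}(Q_i)$; cross-terms are then disjoint by (i), and same-piece terms are disjoint because $\mathcal{T}$ and each $\mathcal{T}_i$ is a tiling. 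Granting this, the number of tiles of $\mathcal{T} \cup \mathcal{T}_1 \cup \cdots \cup \mathcal{T}_r$ is $|\mathcal{T}| + \sum_{i=1}^{r}|\mathcal{T}_i|$, and it equals the common cardinality $\binom{n-2}{k}$ of positroid tilings of $\Delta_{k+1,n}$. Hence
\[
|\mathcal{T}| \;=\; \binom{n-2}{k} - \sum_{i=1}^{r}|\mathcal{T}_i|,
\]
which is manifestly independent of the choice of $\mathcal{T}$; this proves the equicardinality statement (and even gives a formula for the common value).

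The main obstacle is the black box I am not reproving, namely that all positroid tilings of the full hypersimplex $\Delta_{k+1,n}$ have the same cardinality $\binom{n-2}{k}$; I would cite this from \cite{LPW,m=2hypersimplex}, where it is deduced from the bijection between such tilings and triangulations of the $m=2$ amplituhedron $\mathcal{A}_{n,k,2}$ together with the equicardinality of triangulations of the latter. Within the completion argument, the one delicate point is the disjointness verification, and specifically fact (ii): I would justify it by noting that a full-dimensional positroid tile of $\Gamma_M$ has underlying positroid polytope $T \subseteq \Gamma_M$ of the same dimension, so $T$ lies weakly on one side of every facet hyperplane of $\Gamma_M$, whence $\mathrm{relint}(T) \subseteq \mathrm{relint}(\Gamma_M)$; combined with the standard fact that the moment-map image of a positroid cell is the relative interior of its polytope, this gives $\phi(S) \subseteq \mathrm{relint}(\Gamma_M)$. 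If one wanted to avoid quoting the hypersimplex case, I would instead try to induct on $\mathrm{vol}(\Delta_{k+1,n}) - \mathrm{vol}(\Gamma_M)$, realizing $\Gamma_M$ as one side of a positroid split of a strictly larger positroid polytope and gluing tilings of the two sides; the difficulty there is to organize the induction so that both sides of the split are strictly closer to the ambient hypersimplex in the chosen measure, which fails for the naive volume measure when the split is very unbalanced.
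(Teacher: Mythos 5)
This statement is quoted verbatim from \cite{parisi2024magic} (Cor.\ 5.15) and the present paper gives no proof of its own, so the only benchmark is the source. Your completion argument --- extend $\Gamma_M$ to a positroid subdivision of $\Delta_{k+1,n}$ via the preceding proposition, tile the complementary full-dimensional cells, and subtract from the fixed cardinality $\binom{n-2}{k}$ of positroid tilings of the whole hypersimplex --- is exactly the intended deduction there, and your handling of the disjointness of open tiles (via invariance of domain and $\mathrm{relint}(T)\subseteq\mathrm{relint}(\Gamma_M)$ for full-dimensional $T\subseteq\Gamma_M$) is sound; the only real content you defer is the equicardinality for the full hypersimplex, which is indeed the main theorem being cited rather than something to reprove here.
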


We can reduce the problem to connected reduced bipartite plabic graphs that are unions of pairwise adjacent cycles. Two cycles are adjacent if they share at least an edge.

\begin{prop}\label{prop:reduction}
Let $M$ be a connected positroid on $[n]$ with reduced bipartite plabic graph $G$ associated with the plabic tiling $\T(G)$. 
For all $i, j \in [n]$, we draw an arc $i \to j$ if $i \to j$ is compatible with $\T(G)$ (see \cref{def:arc}).
Let $\T_1,\dots,\T_\ell$ be the sub-plabic-tilings of $\T(G)$ cut out by compatible arcs that are not bicolored subdivisions.
Then the cardinality $\N(M) =: \N(\T(G))$ of any positroid tiling of $\Gamma_M$ is equal to the product
$$ \N(\T(G)) = \prod_{i=1}^\ell \N(\T_i). $$
Moreover, for $i = 1,\dots,\ell$, each $\T_i$ is a union of mutually adjacent cycles.
\end{prop}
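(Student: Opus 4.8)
The plan is to induct on the number of compatible arcs of $\T(G)$ that are not edges of $\P_n$, reducing the product formula to the multiplicativity of $\N$ across a single cut, and to obtain the ``moreover'' clause from the fact that the pieces $\T_i$ admit no further cuts. Say that a compatible arc $i\to j$ of $\T(G)$ (see \cref{def:arc}) that is not an edge of $\P_n$ \emph{cuts} $\T(G)$: the chord $i$--$j$ splits $\P_n$ into the sub-polygons on the cyclic intervals $[i,j]$ and $[j,i]$ (if $i\to j$ lies strictly inside a polygon $Q$ of $\T(G)$, we also split $Q$ into two polygons of its color, one on each side), producing plabic tilings $\T'$ and $\T''$ and hence connected positroids $M',M''$; then the collection $\{\T_1,\dots,\T_\ell\}$ for $\T(G)$ is the disjoint union of those for $\T'$ and for $\T''$. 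If $\T(G)$ has no cut, then either it is itself a bicolored subdivision, so its plabic graph is a tree, $\Gamma_M$ is a single positroid tile by \cite{LPW}, and $\N(\T(G))=1$ since all positroid tilings of $\Gamma_M$ have the same cardinality \cite{parisi2024magic}, matching the empty product; or $\T(G)$ is one of the $\T_i$ and there is nothing to prove. Otherwise I pick a cut, and by induction it suffices to prove $\N(\T(G))=\N(\T')\cdot\N(\T'')$, the tree pieces on each side already contributing the factor $1$.

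For the multiplicativity across one cut: since $i\to j$ is compatible, $\Gamma_M$ lies in the slab $\area(i\to j)\le x_{[i,j]}\le\area(i\to j)+1$, and pinning $x_{[i,j]}$ to either endpoint of this range yields a face that is a product of positroid polytopes on the two sides of the arc. The underlying structural fact is that $M$ is an amalgam (of parallel-connection / $2$-sum type) of $M'$ and $M''$ along the shared data of the chord, so that $\Gamma_M$ is the corresponding gluing of $\Gamma_{M'}$ and $\Gamma_{M''}$ over the segment $[\area(i\to j),\area(i\to j)+1]$. Because $i\to j$ is compatible with $\T(G)$, it is compatible with every bicolored subdivision $\sigma$ subordinate to $\T(G)$ (the inclusion $\Gamma_\sigma\subseteq\Gamma_M$ forces the same slab on $\Gamma_\sigma$); hence, via \cite{LPW}, every positroid tile $\Gamma_\sigma$ occurring in a positroid tiling of $\Gamma_M$ is cut cleanly by $i\to j$ into subordinate bicolored subdivisions $\sigma',\sigma''$ of $\T',\T''$, while conversely any subordinate pair $(\sigma',\sigma'')$ glues back along the chord to a subordinate $\sigma$, these operations being mutually inverse. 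Applying the cut to every tile of a tiling $\{\sigma_r\}$ of $\Gamma_M$ then produces a positroid tiling of $\Gamma_{M'}$ and one of $\Gamma_{M''}$, with gluing inverse to this; since every positroid tiling of $\Gamma_{M'}$ has $\N(\T')$ tiles and every one of $\Gamma_{M''}$ has $\N(\T'')$ tiles, the bijection $\sigma_r\mapsto(\sigma_r',\sigma_r'')$ identifies the tiles of a tiling of $\Gamma_M$ with the set of pairs (tile of the $\Gamma_{M'}$-tiling, tile of the $\Gamma_{M''}$-tiling), so every positroid tiling of $\Gamma_M$ has $\N(\T')\cdot\N(\T'')$ tiles. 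Iterating over all cuts gives the product formula.

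For the structural clause, each $\T_i$ admits no cut. If its plabic graph had a bridge, that bridge would correspond to a compatible arc of $\T_i$ separating it into two nonempty sub-tilings, i.e.\ a cut; so the plabic graph of $\T_i$ is bridgeless, every edge lies on a cycle, and $\T_i$ is a union of cycles. If two cycles $c,c'$ of $\T_i$ shared no edge, then along a path of cycles in $\T_i$ joining them one finds an intermediate cycle admitting a compatible chord that separates the part of $\T_i$ containing $c$ from the part containing $c'$ --- again a cut --- contradicting irreducibility. Hence any two cycles of $\T_i$ are adjacent, and since $\T_i$ is not a bicolored subdivision it contains at least one cycle, so it is a nonempty union of mutually adjacent cycles.

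The main obstacle is the multiplicativity across one cut: making precise the geometry of $\Gamma_M$ as the gluing of $\Gamma_{M'}$ and $\Gamma_{M''}$ across the arc (the $2$-sum/amalgam description), and then verifying that the cut/glue correspondence is a genuine bijection between positroid tilings of $\Gamma_M$ and pairs of positroid tilings of $\Gamma_{M'},\Gamma_{M''}$ --- in particular that the pieces obtained by cutting all tiles of a tiling of $\Gamma_M$ reassemble into honest covering-and-disjoint tilings on both sides (no ``fractional'' tiles, the covering of $\Gamma_M$ descending to coverings of $\Gamma_{M'}$ and $\Gamma_{M''}$), and surjectivity, that every pair of tiles from the two sides glues to a tile forced to appear in the $\Gamma_M$-tiling. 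The bookkeeping when the cutting chord lies strictly inside a polygon of $\T(G)$ rather than along an edge, and the reduced-plabic-graph combinatorics behind ``bridgeless and arc-irreducible implies a union of pairwise adjacent cycles,'' are the remaining points I expect to require care.
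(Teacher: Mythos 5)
The paper states \cref{prop:reduction} without giving any proof, so there is no argument of the author's to compare yours against; judged on its own terms, your outline has a sensible shape (induct on compatible chords, prove multiplicativity of $\N$ across one chord, then analyze the irreducible pieces), but it is incomplete precisely at the step you yourself flag as ``the main obstacle,'' and that step is essentially the entire content of the proposition. Concretely: (i) you deduce that a compatible arc $i\to j$ of $\T(G)$ is compatible with every bicolored subdivision $\sigma$ with $\Gamma_\sigma\subseteq\Gamma_M$ from the fact that $\Gamma_\sigma$ inherits the width-one slab $\area(i\to j)\le x_{[i,j]}\le\area(i\to j)+1$; but the quoted theorem only gives ``compatible $\Rightarrow$ slab,'' and the converse for bicolored subdivisions needs an argument (e.g.\ via \cref{lem:wsimplex}: all $(w)\in\Ext(C_\sigma)$ would have to share the same value of $\cdes_L(w|_{[i,j]})$, and one must show this forces $i\to j$ to be an edge of, or interior to, a polygon of $\sigma$). (ii) Even granting (i), the assignment $\sigma_r\mapsto(\sigma_r',\sigma_r'')$ does not obviously identify the tiles of a tiling of $\Gamma_M$ with a product $T'\times T''$ of tilings: the induced collections $\{\sigma_r'\}$ and $\{\sigma_r''\}$ are multisets with repetitions whenever two tiles of $\Gamma_M$ agree on one side, so ``applying the cut to every tile produces a positroid tiling of $\Gamma_{M'}$'' is false as literally stated, and surjectivity --- that the given tiling of $\Gamma_M$ contains the glue of every cross pair $(\sigma_r',\sigma_s'')$ --- is exactly the product structure being claimed. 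Establishing it requires the precise $2$-sum/amalgam description of $\Gamma_M$ in terms of $\Gamma_{M'}$ and $\Gamma_{M''}$ along the chord, which you invoke but never prove.

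The ``moreover'' clause likewise rests on two unproved combinatorial facts: that a bridge of the reduced bipartite plabic graph dualizes to a compatible chord of $\P_n$ that genuinely separates $\T(G)$ into two nonempty sub-tilings, and that two non-adjacent cycles in a bridgeless, chord-irreducible piece would always produce a separating compatible chord. Neither is immediate from \cref{def:plabictilings} and \cref{def:arc}, and the second in particular (``along a path of cycles one finds an intermediate cycle admitting a compatible chord'') is asserted without any mechanism. So the proposal is a plausible plan rather than a proof; to complete it you need (a) the decomposition of $M$ along a compatible chord and the resulting bijection between positroid tilings of $\Gamma_M$ and pairs of positroid tilings of $\Gamma_{M'}$ and $\Gamma_{M''}$, and (b) the dictionary between bridges/cycles of $G$ and compatible chords of $\T(G)$.
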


Postnikov--Speyer--Williams characterized the inclusion relations among positroid polytopes as follows.

\begin{lemm}[Cor. 7.4, \cite{psw}]\label{lem:delete}
Let $M$ be a connected positroid on $[n]$ with reduced bipartite plabic graph $G$.
The new plabic graph $G'$ obtained from $G$ by removing an edge from $G$ defines a positroid $M'$ such that $\Gamma_{M'} \subset \Gamma_M$.
Moreover, any positroid $M'$ with $\Gamma_{M'} \subset \Gamma_M$ can be obtained this way.
\end{lemm}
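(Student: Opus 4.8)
The plan is to replace the polytope statement by an equivalent statement about basis collections, and then to prove the two implications using the description of the positroid of a plabic graph via flows/perfect orientations together with the combinatorics of positroid cells in the totally nonnegative Grassmannian. The reduction step is the standard fact that, for two matroids of rank $k$ on $[n]$, one has $\Gamma_{M'} \subseteq \Gamma_M$ if and only if $\mathcal{B}(M') \subseteq \mathcal{B}(M)$: the vertices of a matroid polytope are exactly the indicator vectors $e_B$ with $B$ a basis, and a $0/1$-point $e_B$ with $|B| = k$ lies in $\Gamma_M$ precisely when $B$ is a basis of $M$, since the rank inequalities $\sum_{i \in S} x_i \le \mathrm{rk}_M(S)$ evaluated at $e_B$ read $|B \cap S| \le \mathrm{rk}_M(S)$, which together with $|B| = k = \mathrm{rk}_M([n])$ characterizes bases. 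So it suffices to prove: (a) deleting an edge of $G$ yields a positroid $M'$ with $\mathcal{B}(M') \subsetneq \mathcal{B}(M)$; and (b) every positroid $M'$ with $\mathcal{B}(M') \subseteq \mathcal{B}(M)$ arises from $G$ by a sequence of single edge deletions, where after each deletion one cleans up pruned or degenerate vertices and isolated components, none of which changes the underlying positroid.

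For direction (a), recall that the bases of $M = M(G)$ are the source sets of the perfect orientations of $G$ (equivalently, they are read off from the flows, or from the almost-perfect matchings of the associated bipartite graph). Deleting an edge $e$ keeps exactly those perfect orientations that do not traverse $e$, and the corresponding source sets are precisely $\mathcal{B}(M(G \setminus e))$; hence $\mathcal{B}(M') \subseteq \mathcal{B}(M)$ and $\Gamma_{M'} \subseteq \Gamma_M$. Properness is where reducedness is used: in the edge-weight parametrization of the positroid cell, $G \setminus e$ is the locus where the weight of $e$ vanishes, which is a \emph{proper} face of $\overline{S_M}$ because in a reduced plabic graph every edge is essential; therefore $\mathcal{B}(M') \subsetneq \mathcal{B}(M)$ and the polytope inclusion is proper.

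For direction (b), I would induct on $|\mathcal{B}(M)| - |\mathcal{B}(M')|$. Encode the inclusion via Grassmann necklaces: with $\mathcal{J} = \mathcal{J}(M)$ and $\mathcal{J}' = \mathcal{J}(M')$, \cref{thm:basis} gives that $\mathcal{B}(M') \subseteq \mathcal{B}(M)$ is equivalent to $J'_i \ge_i J_i$ for every $i$; if all of these are equalities then $M = M'$ and there is nothing to do. Otherwise one must exhibit a single edge $e$ of $G$ such that the positroid $M'' = M(G \setminus e)$ satisfies $J_i \le_i J''_i \le_i J'_i$ for all $i$ with $\mathcal{J}'' \neq \mathcal{J}$, so that $\mathcal{B}(M') \subseteq \mathcal{B}(M'') \subsetneq \mathcal{B}(M)$ and the inductive hypothesis applies to $M''$ (which is realized by the tidied graph $G \setminus e$, itself having a reduced bipartite plabic graph). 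The most robust route to such an $e$ is through the totally nonnegative Grassmannian: the containment $\Gamma_{M'} \subseteq \Gamma_M$ lifts to the cell-closure containment $S_{M'} \subseteq \overline{S_M}$ (Postnikov--Speyer--Williams), and the boundary strata of $\overline{S_M}$ are obtained from a reduced plabic graph $G$ for $M$ by deleting edges; walking down a saturated chain in the cell poset from $S_M$ to $S_{M'}$ then realizes $M'$ by successive edge deletions. A more hands-on alternative is to run Postnikov's ``rules of the road'' computing $\mathcal{J}$ from $G$: take the smallest $i$ with $J'_i >_i J_i$, follow the trip/flow witnessing the Gale-minimal basis $J_i$, and delete an edge along it so that $J_i$ advances by exactly one step in the $i$-order while no $J_j$ decreases.

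The main obstacle is precisely this localization inside direction (b): guaranteeing that one can always delete a single edge of $G$ that pushes the Grassmann necklace monotonically \emph{toward} $\mathcal{J}'$ in all $n$ coordinates at once, without overshooting past $J'_j$ in some coordinate $j$ and without producing an infeasible (empty) positroid. This is the combinatorial heart of the Postnikov--Speyer--Williams argument; the cell-closure reformulation packages it cleanly, but it still rests on the two nontrivial inputs that polytope containment lifts to cell-closure containment and that every boundary stratum of a positroid cell is cut out by edge deletions of a reduced plabic graph.
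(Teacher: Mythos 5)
The paper does not prove \cref{lem:delete} at all: it is imported verbatim as Cor.~7.4 of \cite{psw}, so your attempt has to stand on its own. Your reduction from polytope containment to basis containment is correct (the $0/1$ points of a matroid polytope are exactly the basis indicator vectors), and the outline of direction (a) — bases of $M(G)$ are the source sets of perfect orientations, and deleting $e$ keeps precisely the orientations avoiding $e$ — is the right mechanism. Even here, though, two points are asserted rather than proved: deleting an edge of a reduced plabic graph can destroy \emph{all} perfect orientations (an edge incident to a degree-one internal vertex, e.g.\ a lollipop, lies in every almost perfect matching), so "defines a positroid'' needs either a cleanup convention or an argument that some orientation survives; and the claim that the inclusion is strict because "every edge is essential in a reduced graph'' is not justified (nor is strictness actually required by the statement).

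The genuine gap is direction (b). Your main route consists of invoking exactly the two nontrivial facts whose conjunction \emph{is} the cited Postnikov--Speyer--Williams result: that $\Gamma_{M'} \subseteq \Gamma_M$ lifts to the cell-closure containment $S_{M'} \subseteq \overline{S_M}$, and that every cell in $\overline{S_M}$ is obtained from a reduced plabic graph for $M$ by deleting edges. Neither is proved, so the argument is a reduction of the lemma to the reference rather than an independent proof. The "hands-on'' alternative — take the smallest $i$ with $J'_i >_i J_i$ and delete one edge along the trip realizing $J_i$ so that the Grassmann necklace advances one step in every coordinate without overshooting any $J'_j$ and without emptying the basis set — is only a plan; as you acknowledge yourself, this monotone single-edge-deletion step is the combinatorial heart of the matter and is left unestablished. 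As written, the proposal explains plausibly \emph{why} the lemma is true but does not close it; to be a proof it would need either the two cell-level inputs proved or the necklace-pushing argument carried out in detail.
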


\cref{prop:reduction} and \cref{lem:delete} give us an algorithm to compute the positroid tilings of any positroid polytope by induction on the number of cycles.

\appendix
\section{decorated permutations, plabic graphs}\label{appendix}
\subsection{Decorated permutations}

There is a bijection between Grassmann necklaces and decorated permutations \cite{postnikov2006}.

\begin{defi}
A \emph{decorated permutation} $\pi^{:} = (\pi, \col)$ is a permutation $\pi \in S_n$ together with a coloring function $\col$ to black or white from the set of fixed points $\{i \mid \pi(i) = i\}$.
\end{defi}

Each plabic graph with $n$ boundary vertices defines the \emph{trip permutation} $\pi_G \in S_n$ such that $\pi_G(i) = j$ whenever the trip that starts at the boundary vertex $i$ ends at boundary vertex $j$.
If $i$ is a fixed point of $\pi_G$, then we color $i$ by the color of the $i$-th boundary vertex.

There is a bijection between decorated permutations and Grassmann necklaces.
Given a Grassmann necklace $\J$, we construct the decorated permutation $\pi^:(\J) = (\pi,\col)$ such that
\begin{itemize}
    \item if $J_{i+1} = J_i - \{i\} \cup \{j\}$ for $j \neq i$ then $\pi(i) = j$;
    \item if $J_{i+1} = J_i$ and $i \notin J_i$ then $\pi(i) = i$ is colored in black;
    \item if $J_{i+1} = J_i$ and $i \in J_i$ then $\pi(i) = i$ is colored in white;
\end{itemize}
where the indices $i$ are taken modulo $n$.

\subsection{Plabic graphs}

Equivalence classes of reduced plabic graphs are in bijection with positroids, through its decorated trip permutation.

\begin{defi}\label{def:plabic}
A {\it planar bicolored graph} (or ``plabic graph'')
is a planar graph $G$ properly embedded into a closed disk, such that each internal vertex is colored black or white;
each internal vertex is connected by a path to some boundary vertex; 
there are (uncolored) vertices lying on the 
boundary of the disk labeled $1,\dots, n$ for some positive $n$;
and each of the boundary vertices is incident to a single edge.
\end{defi}

There is a natural set of local transformations (moves) of plabic graphs:

(M1) \emph{Square move} (or \emph{urban renewal}).  If a plabic graph has a square formed by
four trivalent vertices whose colors alternate,
then we can switch the
colors of these four vertices.

(M2) \emph{Contracting/expanding a vertex}.
Two adjacent internal vertices of the same color can be merged.
This operation can also be reversed. 

(M3) \emph{Middle vertex insertion/removal}.
We can remove/add degree $2$ vertices.

See \cref{fig:M1} for depictions of these three moves.

\begin{figure}[h]
\centering
\includegraphics[height=.5in]{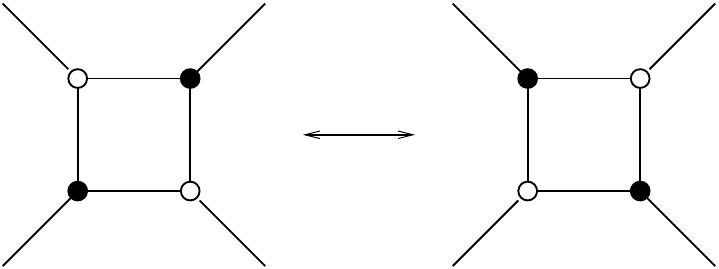}
\hspace{.3in}
\raisebox{6pt}{\includegraphics[height=.3in]{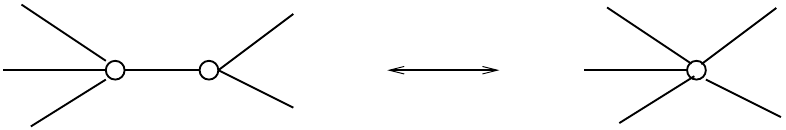}}
\hspace{.3in}
\raisebox{16pt}{\includegraphics[height=.07in]{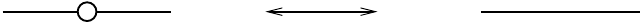}}
\caption{%
	Local moves (M1), (M2), (M3) on plabic graphs.}
\label{fig:M1}
\end{figure}
\begin{defi}\label{def:move}
Two plabic graphs are called \emph{move-equivalent} if they can be obtained
from each other by moves (M1)-(M3).  The \emph{move-equivalence class}
of a given plabic graph $G$ is the set of all plabic graphs which are move-equivalent
to $G$.
A plabic graph 
is called \emph{reduced} if there is no graph in its move-equivalence
in which two adjacent vertices $u$ and $v$ 
are connected by more than one edge
\end{defi}

In particular, given a plabic graph $G$, we can always apply moves to $G$ to obtain a new graph $G'$ which is bipartite.

For a plabic graph $G$, a \emph{trip} is a directed path $T$ in $G$ such that
\begin{itemize}
    \item $T$ either joins two boundary vertices (\emph{one-way trip}) or it is a closed cycle that contains none of the boundary vertices (\emph{round-trip}).
    \item $T$ obeys the following ``rules of the road" at each internal vertex: turn right at a white vertex, and turn left at a black vertex.
\end{itemize}

We associate a decorated permutation $\pi_G$ to each reduced plabic graph $G$, called \emph{trip permutation}, such that $\pi_G(i)$ is the ending vertex of the trip that starts at the boundary vertex $i$.
The moves (M1)-(M3) do not change the trip permutation of a plabic graph, and equivalence classes of reduced plabic graphs are in bijection with positroids \cite{postnikov2006}.

\begin{defi}[Plabic tilings] \label{def:plabictilings}
Let $G$ be any connected reduced plabic graph with $n$ boundary vertices, and let $\mathbf{P}_n$ be a convex $n$-gon, whose vertices are labelled from $1$ to $n$ in clockwise order. The \emph{plabic tiling} $\T(G)$ dual to $G$ is a tiling of $\mathbf{P}_n$ by coloured polygons (bigons allowed) such that: i) it is the planar dual of $G$; ii) each black (white) vertex of $G$ is dual to a black (white) polygon in $\mathcal{T}(G)$; iii) vertex $i$ of $\mathbf{P}_n$ is dual to the face of $G$ touching boundary vertices $i-1$ and $i$. We consider two plabic tilings $\T(G)$ and $\T'(G')$ \emph{equivalent} if $G$ and $G'$ are move-equivalent. 

Conversely, if $\T$ is a plabic tiling, the dual plabic graph 
$G(\T)$ is obtained from $\T$ by placing a black vertex in each black polygon,
a white vertex in each white polygon, and connecting two vertices whenever they correspond
to two polygons which share an edge.
\end{defi}

\printbibliography
\end{document}